\newcommand{\norm}[1]{ \big\lVert#1\big \rVert}
\renewcommand{\Re}{\operatorname{Re}}
\renewcommand{\Im}{\operatorname{Im}}
\numberwithin{equation}{section}
\newtheorem{definition}{Definition}[section]
\newtheorem{proposition}[definition]{Proposition}
\newtheorem{theorem}[definition]{Theorem}
\newtheorem*{proposition1}{Theorem 1.3$'$}
\newtheorem{lemma}[definition]{Lemma}
\newtheorem{corollary}[definition]{Corollary}
\newtheorem{remark}[definition]{Remark}
\title{Subcritical Fourier uncertainty principles
}
\author{}
\date{}
\author{Miquel Saucedo and Sergey Tikhonov}
\begin{document}

\address{ M. Saucedo \\
 Centre de Recerca Matem\`atica\\
Campus de Bellaterra, Edifici~C 08193 Bellaterra (Barcelona), Spain}
\email{miquelsaucedo98@gmail.com}

\address{ S. Tikhonov \\
 ICREA, Pg.
Llu\'is Companys 23, 08010 Barcelona, Spain
\\
Centre de Recerca Matem\`atica
Campus de Bellaterra, Edifici~C, 08193 Bellaterra (Barcelona), Spain,
and Universitat Aut\`onoma de
Barcelona,
Facultat de Ci\`encies,  08193 Bellaterra (Barcelona), Spain.}
\email{stikhonov@crm.cat}

\date{\today}

\keywords{Fourier transform,  uncertainty principle}

\subjclass{Primary: 42A38, 42B35}

\thanks{M. Saucedo is supported by  the Spanish Ministry of Universities through the FPU contract FPU21/04230.
 S. Tikhonov is supported
by PID2020-114948GB-I00, 2021 SGR 00087. This work is supported by
the CERCA Programme of the Generalitat de Catalunya and the Severo Ochoa and Mar\'ia de Maeztu
Program for Centers and Units of Excellence in R\&D (CEX2020-001084-M)}
		
\begin{abstract}
It is well known that if a function $f$ satisfies
$$        \norm{f(x) e^{\pi \alpha |x|^2}}_p + \norm{\widehat{f}(\xi) e^{\pi \alpha |\xi|^2}}_q<\infty  \eqno{(*)}$$
with $\alpha=1$
and $1\le p,q<\infty$, then $f\equiv 0.$

We prove that if  $f$ satisfies $(*)$
with some $0<\alpha<1$ and $1\le p,q\leq \infty$,   then
$$ |f(y)|\le C
 (1+|y|)^{\frac{d}{p}}
 e^{- \pi \alpha |y|^2}, \quad
y\in \mathbb{R}^d,
$$
with $ C=C(\alpha,d,p,q)$
and this bound is sharp for $p\neq 1$.
We also study a related uncertainty principle for functions satisfying
$\;\;\displaystyle\norm{f(x)|x|^m}_p+  \norm{\widehat{f}(\xi)|\xi|^n}_q
<\infty.$

\end{abstract}
\maketitle

\section{Introduction}

Given an integrable function $f: \mathbb{R}^d\to \mathbb{C}$,
we define its Fourier transform by
$$ \widehat{f}(\xi)=\int_{\mathbb{R}^d} {f}(x) e^{-2\pi i \langle x, \xi \rangle} d x.
$$
We start with the Heisenberg uncertainty principle, discovered almost one hundred years ago, which is nowadays one of the fundamental results in mathematics and  quantum
mechanics, see \cite{folland}. Heisenberg's estimate reads as follows: if $f\in L_2(\mathbb{R}^d)$,  
 one has
\begin{equation}\label{heisenberg}
  \frac{d^2}{16\pi^2}\|f\|_2^4
\le\int_{\mathbb{R}^d}
|x |^2
|f(x)|^2 dx
\int_{\mathbb{R}^d}
|\xi |^2 |\widehat{f}(\xi)|^2 d\xi.
\end{equation}
This result
 quantitatively examines
the concepts of concentration around the origin and decay at infinity.
The equality in \eqref{heisenberg} is attained when $f$ is a
 Gaussian function, i.e,
 $f(x) = Ce^{-\lambda |x|^2}$, $\lambda>0$.

The fact that the Gaussian is the best localized function in time
and frequency was also recognized by  Hardy \cite{Hardy}.
In 1933, he proved  the following  result:
assume that  the estimates
\begin{equation}
\label{eq:hardyintro}
    f(x)=O\big(|x|^m e^{-a\pi |x|^2}\big)\quad \mbox {and}\quad  \widehat{f}(\xi)
=O\big(|\xi|^n e^{-b\pi |\xi|^2}\big)
\end{equation}
 hold for $|x|,|\xi| \to \infty $
  with some positive $a,b$ and non-negative $m,n$, then $f=0$ whenever
$ab > 1$; and $f(x) = P(x)e^{-a\pi |x|^2}$  with  a polynomial $P$,
if $ab = 1$. (Hardy only studied the one-dimensional case, for the multivariate result see \cite{bonami, than}.)
Both Heisenberg's and Hardy's
uncertainty principles
 show that
 a function and its  Fourier transform
cannot both be highly localized.

This result was later generalized to the $L_p$ setting by Cowling and Price \cite{cowling}, who obtained the following result:
if $1\le p,q\le \infty$, $(p,q)\ne (\infty,\infty)$ and a function $f$ satisfies 
\begin{equation}\label{cowling}
\norm{f(x)e^{\pi |x|^2}}_p + \norm{\widehat{f}(\xi)e^{\pi |\xi|^2}}_q < \infty, 
\end{equation}
then, necessarily, $f$ is identically zero.


Let us now consider   the subcritical case, that is, $ab<1$. It is clear that in this case
 the class of functions $f$
 satisfying \eqref{eq:hardyintro}
 is an infinite dimensional space, which, in particular,  contains  the Hermite functions. On the other hand,
Vemuri \cite{vemuri} obtained that  
 condition \eqref{eq:hardyintro} with $ab<1$ implies a rather strong property on $f$, namely,
  the exponential decay of its Hermite coefficients. See also  \cite{kulikov2023gaussian, RR}.

Very recently, Kulikov,  Oliveira, and Ramos \cite{kulikov2023gaussian}
 have studied the subcritical case of \eqref{cowling} and proved the following  result: assume that $0<\alpha<1$, $1\le p,q< \infty$, and
$f$ satisfies
\begin{equation}
\label{eq:mainequation--} \norm{f(x) e^{\pi \alpha |x|^2}}_p + \norm{\widehat{f}(\xi) e^{\pi \alpha |\xi|^2}}_q  < \infty,
\end{equation}
then,
  for
each $\varepsilon > 0$, there is a positive constant $A = A(\varepsilon)$ such that
$$| f (x)| \le A(\varepsilon) e^{-(1-\varepsilon)\alpha \pi|x|^2}.
$$
Thus, it is a natural question to find the
sharp
rate of decay at infinity
for functions satisfying (\ref{eq:mainequation--}).
More specifically, the following question was asked in  \cite{kulikov2023gaussian}:
 is it true that (\ref{eq:mainequation--}) for $p=q=2$ implies
$$| f (x)| \lesssim  e^{- \pi\alpha|x|^2}\quad?$$

 In this paper, we answer these questions
 through the following uncertainty principle:

\begin{theorem}
\label{theorem:mainth}
Let $0<\alpha<1$ and $1 \le p,q\le\infty$, $p\ne 1$. Then$,$ for $y\in\mathbb{R}^d,$
\begin{equation}
\label{eq:mainequation}\sup_{f\in L_1+L_2}\frac{|f(y)|}{\norm{f(x) e^{\pi \alpha |x|^2}}_p + \norm{\widehat{f}(\xi) e^{\pi \alpha |\xi|^2}}_q} \approx _{\alpha, d, p,q}  (1+|y|)^{\frac{d}{p}}e^{-\pi \alpha |y|^2}.\end{equation}
\end{theorem}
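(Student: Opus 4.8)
The plan is to establish the upper bound and the lower bound separately, the former being the substantial part. For the upper bound, fix $f \in L_1 + L_2$ normalized so that $\norm{f(x)e^{\pi\alpha|x|^2}}_p + \norm{\widehat f(\xi)e^{\pi\alpha|\xi|^2}}_q \le 1$, and write $f(x) = g(x)e^{-\pi\alpha|x|^2}$ with $g \in L_p$. The key analytic device is to consider the entire function obtained by analytically continuing $f$ (or a suitable mollification of it): since $f$ decays like a Gaussian with parameter $\alpha < 1$, the integral defining $\widehat f$ extends holomorphically to all of $\mathbb{C}^d$, and one gets pointwise control on $\widehat f(\xi + i\eta)$ by $e^{\pi\alpha|\xi|^2 + C|\eta|^2/\alpha}$-type bounds via Cauchy–Schwarz in the defining integral. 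I would then run a Phragmén–Lindelöf / complex-interpolation argument in the spirit of the Beurling–Hörmander and Cowling–Price proofs: the function $F(z) = \widehat f(z) e^{\pi\alpha z^2}$ (with $z^2 = z_1^2 + \cdots + z_d^2$) is entire of order $2$, bounded on the real subspace by the $L_q$-type hypothesis after an $L^q$-to-$L^\infty$ localization, and has controlled growth in the imaginary directions; applying a Phragmén–Lindelöf principle in the strips/cones forces $\widehat f(\xi) e^{\pi\alpha|\xi|^2}$ to satisfy a polynomial bound in $|\xi|$, and then inverting the Fourier transform (again using the Gaussian decay to justify the inversion and estimate the resulting integral) transfers this to the claimed bound $|f(y)| \lesssim (1+|y|)^{d/p} e^{-\pi\alpha|y|^2}$. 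The exponent $d/p$ should emerge from the $L^p$-to-$L^\infty$ step: a function in $L_p$ with its Fourier-type transform also controlled cannot be too large, but reconstructing its pointwise values from an $L_p$ bound over a ball of radius $\sim |y|$ costs a factor $|y|^{d/p}$ by Hölder after one localizes to the region where the Gaussian weight is comparable to its value at $y$.

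A cleaner route, which I would actually attempt first, is to reduce to the endpoint case $p = q = \infty$ by an interpolation/embedding trick, since controlling $\norm{f e^{\pi\alpha|\cdot|^2}}_p$ for $p > 1$ together with a one-sided Fourier condition should, via the subcritical Gaussian decay already available from Kulikov–Oliveira–Ramos, upgrade to an $L^\infty$ bound with weight $(1+|x|)^{d/p}e^{-\pi\alpha|x|^2}$; then the heart of the matter is the $L^\infty$ version of the uncertainty principle, i.e.\ that $\norm{f e^{\pi\alpha|\cdot|^2}}_\infty + \norm{\widehat f e^{\pi\alpha|\cdot|^2}}_\infty < \infty$ forces $|f(y)| \lesssim e^{-\pi\alpha|y|^2}$ outright (the $p=\infty$ case, where $d/p = 0$). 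For this $L^\infty$ core I would expand $f$ in Hermite functions $h_\nu$ (eigenfunctions of the Fourier transform), following Vemuri's observation: the hypothesis with $\alpha < 1$ forces exponential decay of the Hermite coefficients $c_\nu$ with a rate depending on $\alpha$, and then one sums the Hermite series using the Mehler-type pointwise bounds $|h_\nu(x)| \lesssim$ (something) $e^{-\pi|x|^2}$ times a polynomial in $\nu$, optimizing the truncation level against $|x|$ to recover exactly the Gaussian $e^{-\pi\alpha|x|^2}$ with no loss in the exponent. The subtlety is that naive Hermite-coefficient decay gives $e^{-(1-\varepsilon)\alpha\pi|x|^2}$ (the known result), so extracting the sharp constant $\alpha$ requires a careful saddle-point analysis of the Hermite sum rather than crude term-by-term bounds; this is the step I expect to be the main obstacle.

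For the lower bound (sharpness for $p \ne 1$), I would exhibit an explicit family of extremizers. The natural candidates are translated/modulated Gaussians or, more precisely, functions of the form $f_y(x) = e^{-\pi\alpha|x|^2} \cdot \psi_y(x)$ where $\psi_y$ is chosen so that $f_y$ concentrates near $y$ while keeping both weighted norms bounded; a concrete choice is a dilated Gaussian $f(x) = e^{-\pi\beta|x|^2}$ with $\beta$ slightly larger than $\alpha$, whose Fourier transform is again Gaussian with parameter $1/\beta > \alpha$, so both weighted $L_p$ and $L_q$ norms are finite, and evaluating at $y$ with $\beta$ optimized as a function of $|y|$ — taking $\beta = \alpha + c/|y|^2$ — produces the value $\sim (1+|y|)^{d/p} e^{-\pi\alpha|y|^2}$ after computing the Gaussian $L_p$ norms (which contribute the $(1+|y|)^{d/p}$ via $\|e^{-\pi(\beta-\alpha)|x|^2}\|_p \sim (\beta-\alpha)^{-d/(2p)} \sim |y|^{d/p}$). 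The case $p = 1$ is genuinely excluded because that $L^1$ computation gives $(\beta-\alpha)^{-d/2}$ matching $(1+|y|)^{d}$, but the Cowling–Price phenomenon at $p=1$ is different; one checks directly that the supremum is then of strictly smaller order (or the relevant test functions degenerate), which is why the theorem restricts to $p \ne 1$. I would organize the write-up as: (i) the $L^\infty$ sharp decay via Hermite expansion and saddle-point; (ii) the reduction from general $p,q$ to the $L^\infty$ case; (iii) the Gaussian test-function computation for the lower bound; with the Phragmén–Lindelöf argument kept in reserve as an alternative to (i) in case the Hermite saddle-point estimate proves too delicate.
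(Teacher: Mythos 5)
Your lower-bound construction does not work, and this is a genuine quantitative error rather than a detail. Take $f(x)=e^{-\pi\beta|x|^2}$ with $\beta=\alpha+c/|y|^2$: then $f(y)\approx e^{-\pi\alpha|y|^2}$, but $\norm{f(x)e^{\pi\alpha|x|^2}}_p\approx(\beta-\alpha)^{-d/(2p)}\approx|y|^{d/p}$, so the ratio you produce is $\approx(1+|y|)^{-d/p}e^{-\pi\alpha|y|^2}$: the polynomial factor ends up in the denominator, and your test function falls short of the claimed sharp rate by a factor $(1+|y|)^{2d/p}$. To match the upper bound one needs $|f(y)|e^{\pi\alpha|y|^2}\approx1$ while \emph{both} weighted norms are $\lesssim(1+|y|)^{-d/p}$ (or exponentially small), and no pure Gaussian does this. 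The paper's extremizer is $f(x)=\widehat{h_M}(x)e^{-\pi\alpha x^2}$, where $h_M$ is supported on an interval of length $\approx\lambda y$, equals (a renormalized) error of best approximation of $\cos(2\pi\xi D)$ by polynomials (with de la Vall\'ee Poussin means of Legendre expansions to control $L_p$ for $p\ne2$), and is orthogonal to all polynomials of degree $\approx y^2$. That orthogonality annihilates the Taylor polynomial of the Gaussian in the convolution defining $\widehat f$, making $\norm{\widehat f e^{\pi\alpha|\xi|^2}}_q$ exponentially small, while $\norm{fe^{\pi\alpha|x|^2}}_p=\norm{\widehat{h_M}}_p\approx y^{-1/p}$ and $\widehat{h_M}(y)\approx1$; one also needs the approximation-theoretic input that a cosine of frequency $D$ cannot be approximated by polynomials of degree $\lesssim cD$ (the paper's Lemma \ref{lemma:Ecd}). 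None of these ingredients appear in your sketch, and your discussion of $p=1$ rests on the same flawed Gaussian computation (in the paper the upper bound in fact holds for $p=1$ as well; it is only sharpness that fails there).

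The upper-bound half is also incomplete. Your "cleaner route" is circular: reducing to $p=q=\infty$ by claiming that the $L_p$ hypothesis plus Kulikov--Oliveira--Ramos upgrades to an $L_\infty$ bound with weight $(1+|x|)^{d/p}e^{-\pi\alpha|x|^2}$ assumes exactly the inequality to be proved, since KOR only yields $e^{-(1-\varepsilon)\pi\alpha|x|^2}$; and you yourself flag the sharp-constant Hermite/saddle-point step as an unresolved obstacle. The Phragm\'en--Lindel\"of alternative is likewise not carried out, and it is unclear how a complex-analytic rigidity argument (which is what drives Hardy and Cowling--Price at the critical exponent) could produce the factor $(1+|y|)^{d/p}$, which is an $L_p$-averaging effect in the subcritical, non-rigid regime. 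The paper's proof is entirely real-variable: it writes $f(y)$ by Fourier inversion as $\int(1-\widehat{\phi})\widehat f+\int\widehat{\phi}\widehat f$, where $\phi$ is a reproducing (Christoffel-type) kernel for the weight $|x-y|^{2m}$ on a cube of side $\delta\approx ky$ near $y$, with $m\approx y^2$ and polynomial degree $m_0=Nm$, whose norms are controlled uniformly in $m$ via Nikolskii and Remez inequalities (Theorem \ref{theorem:niktype}, Lemmas \ref{prop:phi} and \ref{prop:multiphi}); H\"older's inequality on each piece gives the bound with constants $A<B$ in the two Gaussians, and a dilation with $AB=\alpha^2$ yields the statement. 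So both halves of your plan would need to be replaced or substantially completed before this constitutes a proof.
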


A routine application of the uniform boundedness principle shows that Theorem \ref{theorem:mainth} is equivalent to the following statement, which answers negatively the question in \cite{kulikov2023gaussian}:

\begin{corollary}
\label{theorem:equivalency}
   Let $0<\alpha<1$ and $1 \le p,q\le\infty$, $p\ne 1$. Then,
    \begin{equation}
      \label{eq:decay}
      \sup _{y\in \mathbb{R}^d} |f(y)| e^{ \pi \alpha |y|^2} E(|y|) <\infty
  \end{equation} for any function $f:\mathbb{R}^d \to \mathbb{C}$ for which
  \begin{equation}
      \label{eq:space}
       \norm{f(x) e^{\pi \alpha |x|^2}}_p + \norm{\widehat{f}(\xi) e^{\pi \alpha |\xi|^2}}_q<\infty
  \end{equation}
       if and only if
       $$\sup _{y\in \mathbb{R}^d} E(|y|) (1+|y|)^{\frac{d}{p}}<\infty.$$
\end{corollary}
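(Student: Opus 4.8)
The plan is to deduce Corollary~\ref{theorem:equivalency} from Theorem~\ref{theorem:mainth} by the uniform boundedness principle, exactly as the text promises. I organize the argument around the seminormed space
\[
X=\Big\{f\in L_1+L_2 : \norm{f(x)e^{\pi\alpha|x|^2}}_p+\norm{\widehat f(\xi)e^{\pi\alpha|\xi|^2}}_q<\infty\Big\},
\]
equipped with the norm given by the denominator in \eqref{eq:mainequation}. The first step is to check that $X$ is a Banach space: it embeds isometrically, via $f\mapsto\big(f(x)e^{\pi\alpha|x|^2},\widehat f(\xi)e^{\pi\alpha|\xi|^2}\big)$, into $L_p\oplus L_q$, and one must verify the image is closed. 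A Cauchy sequence in $X$ converges in $L_p\oplus L_q$ to a pair $(g,h)$; since $p,q<\infty$ is not assumed we treat $p$ or $q$ equal to $\infty$ by passing to convergence in the weak-$*$ sense or, more robustly, by noting that $f_n\to f:=g(x)e^{-\pi\alpha|x|^2}$ pointwise a.e.\ along a subsequence, while $\widehat{f_n}\to h(\xi)e^{-\pi\alpha|\xi|^2}$; continuity of the Fourier transform on $L_1+L_2$ then forces $\widehat f=h e^{-\pi\alpha|\cdot|^2}$, so $f\in X$ and $f_n\to f$ in $X$. Thus $X$ is complete.

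\textbf{The easy implication.} Assume $\sup_{y}E(|y|)(1+|y|)^{d/p}=:M<\infty$. Then for every $f\in X$ and every $y$, Theorem~\ref{theorem:mainth} gives
\[
|f(y)|e^{\pi\alpha|y|^2}E(|y|)\le C_{\alpha,d,p,q}(1+|y|)^{d/p}E(|y|)\,\norm{f}_X\le C_{\alpha,d,p,q}M\,\norm{f}_X,
\]
so the supremum in \eqref{eq:decay} is finite; here I am invoking only the upper bound ``$\lesssim$'' in \eqref{eq:mainequation}, which holds for all $1\le p,q\le\infty$ with $p\ne1$. This direction needs no completeness and no uniform boundedness.

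\textbf{The reverse implication via uniform boundedness.} Conversely, suppose \eqref{eq:decay} holds for every $f\in X$. For each fixed $y\in\mathbb{R}^d$, the map
\[
T_y:X\to\mathbb{C},\qquad T_yf=f(y)e^{\pi\alpha|y|^2}E(|y|)
\]
is a bounded linear functional on $X$: boundedness of the point evaluation $f\mapsto f(y)$ on $X$ is precisely the upper estimate in Theorem~\ref{theorem:mainth} (with the constant $C_{\alpha,d,p,q}(1+|y|)^{d/p}e^{-\pi\alpha|y|^2}$ absorbing the exponential), multiplied by the fixed scalar $e^{\pi\alpha|y|^2}E(|y|)$. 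The hypothesis says $\sup_{y}|T_yf|<\infty$ for each $f\in X$; since $X$ is a Banach space, the uniform boundedness principle yields $\sup_{y}\norm{T_y}_{X^*}=:K<\infty$. It remains to convert the operator-norm bound into the claimed bound on $E$. For this I use the lower bound ``$\gtrsim$'' in \eqref{eq:mainequation}: for each $y$ there exists $f_y\in X$ with $\norm{f_y}_X=1$ and $|f_y(y)|\ge c_{\alpha,d,p,q}(1+|y|)^{d/p}e^{-\pi\alpha|y|^2}$. Then
\[
K\ge|T_yf_y|=|f_y(y)|e^{\pi\alpha|y|^2}E(|y|)\ge c_{\alpha,d,p,q}(1+|y|)^{d/p}E(|y|),
\]
and taking the supremum over $y$ gives $\sup_y E(|y|)(1+|y|)^{d/p}\le K/c_{\alpha,d,p,q}<\infty$, as desired.

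\textbf{Main obstacle.} The only genuinely delicate point is the completeness of $X$ in the endpoint cases $p=\infty$ or $q=\infty$, where $L_p$- or $L_q$-limits must be handled with weak-$*$ arguments and one must confirm that the limiting pair $(g,h)$ really arises from a single function in $L_1+L_2$ whose Fourier transform is $h e^{-\pi\alpha|\cdot|^2}$; the uniform boundedness principle does require a complete domain. Everything else is the standard duality packaging of Theorem~\ref{theorem:mainth}, using its upper bound to get boundedness of the functionals $T_y$ and its lower bound (the extremizers) to propagate the uniform norm bound back to the weight $E$.
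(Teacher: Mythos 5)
Your proposal is correct and is precisely the "routine application of the uniform boundedness principle" that the paper invokes without writing out: the upper bound in Theorem \ref{theorem:mainth} gives the easy implication and the boundedness of the evaluation functionals $T_y$ on the Banach space $X$, while the extremizers from the lower bound convert the uniform bound $\sup_y\norm{T_y}_{X^*}<\infty$ into $\sup_y E(|y|)(1+|y|)^{d/p}<\infty$. The only rough spot is your identification of the limit in the completeness check (pointwise a.e.\ convergence alone does not give continuity of the Fourier transform); this is repaired in one line by observing that convergence in the Gaussian-weighted $L_p$ norm implies $L_1$ convergence, via H\"older with the integrable factor $e^{-\pi\alpha|x|^2}$, and hence uniform convergence of the Fourier transforms, which settles all cases $1\le p,q\le\infty$ at once.
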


The technique we use to prove Theorem
\ref{theorem:mainth} allows us to obtain
 a curious inequality, which is closely related to Theorem
\ref{theorem:mainth} (see item {\it(iii)} in Remark \ref{remark:intro}).
This result can be seen as a
Heisenberg type uncertainty principle for different norms
or, for $p=q=2$, as
a weighted Landau--Kolmogorov type estimate.

\begin{theorem}
\label{theorem:main2}
Let $1\leq p,q \leq \infty$, $m\in \mathbb{R}_+$, and $\varepsilon>0$ be such that $m-d/p>\varepsilon$.
Set $n-d/q'=m-d/p$. Then, there exists a constant $C:=C(\varepsilon,d,p,q)$, independent of $m$, such that for any $y\in \mathbb{R}^d$
we have
    \begin{equation}
    \label{eq:y2}
        |f(y)|(1+|y|^{m}) \leq C (1+|y|^{d/p})\left(\norm{f(x)|x|^m}_p+  \norm{\widehat{f}(\xi)|\xi|^n}_q\right)
    \end{equation}
and
    \begin{equation}
    \label{eq:m}
        |f(y)|(m!)^{1/2} \leq C^m  \left(\norm{f(x)|x|^m}_p+  \norm{\widehat{f}(\xi)|\xi|^n}_q\right).
    \end{equation}

\end{theorem}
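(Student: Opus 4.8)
The plan is to reduce both inequalities to a single pointwise estimate obtained by an interpolation-type argument between the physical and frequency sides, using the Fourier inversion formula together with a careful splitting of the integration domain. First I would normalize so that the right-hand side of \eqref{eq:y2} equals $1$, i.e. assume $\norm{f(x)|x|^m}_p = \norm{\widehat f(\xi)|\xi|^n}_q = 1$ (after a trivial scaling argument one may reduce to the balanced case; the general case follows by homogeneity in the two terms). The key is to estimate $|f(y)|$ by writing $f(y)=\int_{\mathbb{R}^d}\widehat f(\xi)e^{2\pi i\langle y,\xi\rangle}\,d\xi$ and splitting the integral into the region $\{|\xi|\le R\}$ and its complement, where $R=R(|y|)$ is a threshold to be optimized. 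On the low-frequency piece I would use Hölder's inequality with the weight $|\xi|^{-n}$, which is locally integrable precisely when $n<d/q$, i.e. $n-d/q'<d$, equivalently $m-d/p<d$; this is where the hypothesis $m-d/p>\varepsilon$ (and the implicit upper constraint) enters to keep the constant uniform in $m$. On the high-frequency piece one gains decay in $R$ from the factor $|\xi|^{-n}$, and balancing the two contributions produces a bound of the shape $|f(y)|\lesssim R^{\,d/q-n}+(\text{Fourier-side error})$, while a symmetric argument on the physical side (estimating $f(y)$ directly, splitting at radius controlled by $|y|$) gives the complementary bound. Optimizing the free radius against $|y|$ yields exactly the factor $(1+|y|^{d/p})/(1+|y|^m)$.

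More precisely, for \eqref{eq:y2} I would run the dichotomy: when $|y|\lesssim 1$ the inequality is just the statement that $f$ is bounded, which follows from the $L_1$-theory part of the paper (the balanced-norm condition with the stated exponent relation forces $\widehat f\in L_1$ near the origin and away from it, hence $f\in L_\infty$ with the right constant). When $|y|\gg 1$, the gain of the factor $|y|^{d/p-m}$ comes from the scaling structure: replacing $f$ by $f_\lambda(x)=f(\lambda x)$ multiplies $\norm{f_\lambda(x)|x|^m}_p$ by $\lambda^{-m-d/p}$ and $\norm{\widehat{f_\lambda}(\xi)|\xi|^n}_q$ by $\lambda^{n-d/q+d} = \lambda^{\,m+d/p}$ (using $n-d/q'=m-d/p$, so the two exponents are negatives of each other), so the quantity $|f_\lambda(y)|$ scaled appropriately is governed by a single scale-invariant constant; choosing $\lambda\sim|y|$ converts the unit-scale bound into the $|y|$-dependent one. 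The constant's independence of $m$ is then visible because $m$ only appears through these homogeneity exponents, not through any integrability constant — all the $m$-dependent Hölder norms $\||\xi|^{-n}\|_{L^{q'}(|\xi|\le 1)}$ etc. are controlled uniformly once $n-d/q'$ is bounded away from $0$ and from $d$, which is guaranteed by $\varepsilon < m-d/p$ together with the (necessary) upper bound that makes the left side finite.

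For \eqref{eq:m}, the idea is to feed \eqref{eq:y2} into Stirling's formula. Since \eqref{eq:y2} gives $|f(y)|\le C(1+|y|^{d/p})(1+|y|^m)^{-1}$ after normalization, and $\sup_{t\ge 0}(1+t^{d/p})/(1+t^m)\le C' m^{-?}$... rather, the cleaner route is: optimize the bound $|f(y)|(1+|y|^m)\le C(1+|y|^{d/p})$ over $|y|$ after dividing by $(m!)^{-1/2}$; using $|y|^m/m!\le e^{|y|}/\sqrt{\text{something}}$ is too lossy, so instead I would revisit the proof of \eqref{eq:y2} and, at the optimization step, choose the splitting radius $R$ proportional to $\sqrt m$ rather than to $|y|$, which is the natural scale attached to $|x|^m$ (the function $|x|^m e^{-|x|^2/2}$ peaks at $|x|=\sqrt m$, and more to the point the $L^p$ normalization of $|x|^m$ against a fixed profile carries a $\Gamma$-factor $\sim (m!)^{1/2}$). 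Tracking the $\Gamma$-function factors $\Gamma(\tfrac{m+d}{?})^{1/?}$ that arise from the radial Hölder integrals $\int_0^\infty r^{\,*}\,dr$ and applying Stirling produces $C^m/(m!)^{1/2}$ with a single geometric constant $C$.

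The main obstacle I anticipate is \textbf{keeping the constant $C$ in \eqref{eq:y2} and the base $C$ in \eqref{eq:m} independent of $m$}. Every Hölder step against a radial power weight generates a Beta/Gamma factor depending on $m$, and naively these blow up (or decay) like $m^{\pm cm}$; the point of the relation $n-d/q'=m-d/p$ is precisely that the Gamma factors from the physical side and the frequency side are reciprocal and cancel in the product, leaving behind only the $\varepsilon$-dependent "boundary" constants. Making this cancellation rigorous — rather than just asymptotically plausible — while simultaneously handling the endpoint cases $p=\infty$ or $q=\infty$ (where Hölder degenerates and one argues with the essential supremum directly) is the technical heart of the argument; I would isolate it as a lemma on radial power-weighted convolution-type estimates with explicit, $m$-uniform constants, and then deduce both \eqref{eq:y2} and \eqref{eq:m} as corollaries by the two choices of optimization scale ($R\sim|y|$ versus $R\sim\sqrt m$) described above.
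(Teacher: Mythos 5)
Your proposal has two genuine gaps, and they sit exactly where the paper's machinery lives. First, the low-frequency H\"older step fails as stated: on $\{|\xi|\le R\}$ you would need $\norm{|\xi|^{-n}}_{L_{q'}(|\xi|\le R)}<\infty$, which requires $nq'<d$, i.e.\ $n<d/q'$; but the hypothesis $n-d/q'=m-d/p>\varepsilon>0$ forces $n>d/q'$, so this integral diverges (your stated criterion that $|\xi|^{-n}$ is integrable ``precisely when $n<d/q$'' is not the correct condition, and there is no implicit upper bound on $m$ in the theorem --- $m$ may be arbitrarily large). This is precisely why the paper does not split the inversion integral directly but first subtracts a kernel: it writes $f(y)=\int(1-\widehat{\phi}(\xi))\widehat{f}(\xi)e^{2\pi i\langle y,\xi\rangle}d\xi+\int\widehat{\phi}(\xi)\widehat{f}(\xi)e^{2\pi i\langle y,\xi\rangle}d\xi$, where $\phi$ (Lemma~\ref{prop:phi}) is built from the reproducing kernel $K_{m_0}$ of polynomials of degree $m_0=\lceil m+n\rceil$ with respect to the weight $w_{m,\alpha}$ concentrated near $y$ on a box of side $\delta$. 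The point is that $|1-\widehat{\phi}(\xi)|\le C^{m_0}(\delta|\xi|/m_0)^{m_0}$ vanishes at the origin to order $m_0>n-d/q'$, which is what makes $\norm{(1-\widehat{\phi})|\xi|^{-n}}_{q'}$ finite, while on the physical side $\phi$ is supported near $y$ and its weighted $L_{p'}$ norm is controlled, uniformly in $m$, by the Remez/Nikolskii point-evaluation bounds for (generalized) polynomials of Theorem~\ref{theorem:niktype}. Without some vanishing-moment correction of this kind no choice of splitting radius $R$ repairs the divergence at $\xi=0$, and without the polynomial point-evaluation estimates you have no mechanism for the $m$-uniformity of the constants; the hoped-for ``cancellation of Gamma factors'' is not substantiated anywhere in your argument.

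Second, the scaling reduction rests on a miscomputation: for $f_\lambda(x)=f(\lambda x)$ one has $\norm{f_\lambda(x)|x|^m}_p=\lambda^{-(m+d/p)}\norm{f(x)|x|^m}_p$ but $\norm{\widehat{f_\lambda}(\xi)|\xi|^n}_q=\lambda^{\,n-d/q'}\norm{\widehat{f}(\xi)|\xi|^n}_q=\lambda^{\,m-d/p}\norm{\widehat{f}(\xi)|\xi|^n}_q$. The exponents $-(m+d/p)$ and $m-d/p$ are not negatives of each other, so the inequality is not scale-invariant, and applying a putative unit-scale bound to $f(|y|x)$ produces the factor $|y|^{\,m-d/p}$ on the Fourier-side term, i.e.\ the reciprocal of the decay $|y|^{\,d/p-m}$ that \eqref{eq:y2} demands. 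In the paper the $|y|$-dependence does not come from rescaling $f$ at all, but from optimizing the support size of the kernel: $\delta=\lambda m/|y|$ yields \eqref{eq:y2}, while $\delta=\sqrt{m_0}$ yields \eqref{eq:m} (so your instinct that the scale $\sqrt{m}$ governs \eqref{eq:m} is right, but it only becomes usable once the kernel construction and the $m$-uniform Nikolskii bounds are in place).
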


Several comments are in order.
\begin{remark}
 \label{remark:intro}\begin{enumerate}[label=(\roman*),itemjoin=\\,mode=unboxed]
    \item
   {\textnormal{
   By a scaling argument, relation \eqref{eq:mainequation} in Theorem \ref{theorem:mainth}
  can be replaced by
      $$
\sup_{f\in L_1+L_2}\frac{|f(y)|}{\norm{f(x) e^{\pi \alpha |x|^2}}_p + \norm{\widehat{f}(\xi) e^{\pi \beta |\xi|^2}}_q} \approx _{\alpha, \beta, d, p,q}  (1+|y|)^{\frac{d}{p}}e^{-\pi \alpha |y|^2}.$$
     for $0<\alpha\beta <1$.
    }}

     \item {\textnormal{
   In the proof of the
     "$\lesssim$" part of Theorem \ref{theorem:mainth}, the restriction
     $p\ne 1$ is not
     necessary.
    }}

     \item {\textnormal{
   In the proof of the
     "$\lesssim$" part of Theorem \ref{theorem:mainth}, the restriction
     $\alpha<1$ is not necessary.
This gives us a direct proof of the Cowling-Price result from Hardy's uncertainty principle.
 Indeed, if $f$ satisfies
         \begin{equation}
    \label{up--infty}
     \norm{f(x)e^{\pi |x|^2}}_p + \norm{\widehat{f}(\xi)e^{\pi |\xi|^2}}_q < \infty,
         \end{equation}
     then one has simultaneously  $|f(x)|\lesssim (1+|x|)^{\frac{d}{p}} e^{-\pi |x|^2}$ and $|\widehat{f}(\xi)|\lesssim (1+|\xi|)^{\frac{d}{q}} e^{-\pi |\xi|^2}$. Hence, using Hardy's result we conclude that $f(x)=e^{-\pi |x|^2}P(x),$ where $P$ is  a polynomial.
     Finally, since
       \eqref{up--infty}
       holds for $(p,q)\ne (\infty,\infty)$,
     we deduce that $P\equiv 0$. We also note that further connections  between Hardy's and Cowling-Price's results are discussed in Section 3 of \cite{vega}, where
  real variable proofs
of both  uncertainty principles
were found. See also \cite{mal}.
  }}
    \item
    {\textnormal{ In the case $p=q=2$, inequality \eqref{eq:y2} in Theorem \ref{theorem:main2} implies the  "$\lesssim$" direction of Theorem \ref{theorem:mainth}. Indeed,  for $|y|\ge 1$ we have
\begin{eqnarray}
\nonumber
\qquad\quad |f(y)|^2 e^{2 \pi \alpha |y|^2} & \lesssim_{\alpha,d}& |f(y)|^2 \sum_{m=\lceil{d/2\rceil}+1}^\infty \frac{(2 \pi \alpha |y|^2)^m}{m!}  \\
\nonumber
&\lesssim_{\alpha,d}&
(1+ |y|^{d}) \sum_{m=\lceil{d/2\rceil}+1}^\infty \frac{(2 \pi \alpha)^m}{m!} \left(\norm{f(x) |x|^m}_2^2 + \norm{\widehat{f}(\xi) |\xi|^m}_2^2
\right) \\
\label{eq:expo}
 & \lesssim_{\alpha,d}& (1+ |y|^{d})\left(\norm{f(x) e^{ \pi \alpha |x|^2}}_2^2 + \norm{\widehat{f}(\xi) e^{ \pi \alpha |\xi|^2}}_2^2\right).
\end{eqnarray}
  }}

\item
    {\textnormal{
   Analyzing the proof of Theorem \ref{theorem:mainth}, we see that for any fixed $\lambda>0$,  $\norm{f(x) e^{\pi \alpha |x|^2}}_p$ in 
    \eqref{eq:mainequation}
    can be replaced by $\displaystyle\left(\int_{y+  \lambda y [ -1,1]^d} |f(x)|^p e^{p\pi \alpha |x|^2} dx\right)^{\frac{1}{p}}. $
Similarly, in Theorem \ref{theorem:main2},  $\norm{f(x) |x|^m}_p$ can be replaced by 
$$\qquad\,\,\left(\int_{y+  \frac{\lambda m}{y} [ -1,1]^d} |f(x)|^p |x|^{mp} dx\right)^{\frac{1}{p}} \quad \mbox{and} \quad\left(\int_{y+  \sqrt{\lceil m + n \rceil} [ -1,1]^d} |f(x)|^p |x|^{mp} dx\right)^{\frac{1}{p}} $$ in inequalities \eqref{eq:y2} and \eqref{eq:m}, respectively. }}
\item
    {\textnormal{
Using the previous item, we obtain a quantitative version of the Cowling--Price uncertainty principle, where the result depends on the behaviour of 
$\displaystyle\int_{- y}^{y} |f(x)|^p e^{p\pi  |x|^2} dx$ under the assumption $\norm{\widehat{f} e^{\pi \xi^2}}_q < \infty$.
See Proposition \ref{quantitativeCP} for details. 
This continues the research 
initiated  
in \cite{nachr}.
  }}
\item
    {\textnormal{In the case when 
both $f(x)e^{\pi \alpha x^2}$ and $\hat{f}(\xi)e^{\pi \alpha \xi^2}$ are tempered distributions, an
 uncertainty principle related to Theorem \ref{theorem:mainth}  was studied in \cite{demange}.
  }}
  \end{enumerate}
  \end{remark}


{\it Notation.}
\\
Throughout this paper,
we will denote by $C > 0$ absolute constants that may change from
line to line.
We will often use the symbol $F \lesssim G$  to mean that $F \le C G$.
The symbol $F \approx G$  means that both $F \lesssim G$ and $G \lesssim F$. If a constant is allowed to depend on a given parameter, the parameter dependence will be described in parenthesis, e.g., $C(\lambda_1,\dots, \lambda_l)$. Then
$F\lesssim _{\lambda_1, \dots, \lambda_l} G$ means that $F \leq C(\lambda_1,\dots, \lambda_l) G$.

By $\mathbbm{1}_E$ we  denote the characteristic function of the set $E$.
By  $\lfloor x \rfloor$ and $\lceil x \rceil$ we denote the floor and ceiling functions  of $x$, respectively.
Also,  $\frac1p+\frac1{p'}=1$ for $1\le p\le\infty$.

\vspace{1mm}

{\it Outline of the paper.}
 \\
 In Section \ref{Section 2}, we prove
 a weak form  of inequality \eqref{eq:y2} of Theorem \ref{theorem:main2}, where we  allow the constant $C$ to depend on $m$.
This result  can be treated  as a  model case and is based on the following duality relation: 
$$\sup_{f\in L_1+L_2} \frac{|f(y)|}{\norm{fW}_2+ \norm{\widehat{f}V}_2} \leq\inf_ {\phi \in L^1}\big(\norm{(1- \widehat{\phi}(\xi))V^{-1}(\xi)}_2 + \norm{\phi(y-x ) W^{-1}(x)}_2\big),
$$ which holds under some minor conditions on the  weights $W$ and $V$.
Building on this inequality,  we proceed by choosing a function $\phi$ for which the right-hand side is small.
In Section \ref{Section 3}
we obtain the properties of $\phi$ which we need to prove
  the upper bound in Theorem \ref{theorem:mainth} and
 Theorem \ref{theorem:main2}.
The proof of the lower bound  in Theorem \ref{theorem:mainth}
is contained in Section \ref{Section 4}.
 Our approach is motivated by
the special case
 $\alpha={1}/{\sqrt{2}}$ and $p=q=2$.
 In this case, inequality  \eqref{eq:mainequation} is proved to be equivalent to a point evaluation estimate for bandlimited functions.
Finally, in Section \ref{Section 5},  we collect further applications of our results   
and  open questions.

\vspace{1mm}

{\it Acknowledgements.}
 \\
We would like to thank Aline Bonami for drawing our attention to the results of Demange \cite{demange}.
 
\vspace{3mm}
\section{Motivation
} \label{Section 2}

\subsection{Duality lemma and weak version of Theorem \ref{theorem:main2} for $p=q=2$}
In this section, we obtain Theorem \ref{theorem:main2} in a weaker form, where we allow the constant $C$  in
 \eqref{eq:y2} to depend on $m$.
 This provides us with several hints on how  to attack the "$\lesssim$" part in Theorem  \ref{theorem:mainth} and
 Theorem \ref{theorem:main2}.

Observe that the quantity $\norm{f(x) |x|^m}_2+ \norm{\widehat{f}(\xi) |\xi|^m}_2$ can be regarded as a norm on the intersection of the spaces $L_{2,m}$ and $\widehat{L}_{2,m}$ with the norms $\norm{f(x) (1+|x|^m)}_2$ and $\norm{\widehat{f}(\xi) (1+|\xi|^m)}_2$, respectively. Then, inequalities \eqref{eq:y2} and \eqref{eq:m} become upper bounds for the norm of the functional $\delta_y(f)= f(y)$ as an element of $\left(L_{2,m} \cap \widehat{L}_{2,m}\right)^* = L_{2,m}^* + \widehat{L}_{2,m}^*$ (see, e.g., \cite[p. 174]{bs}), that is, for the quantity $$\inf_{\phi_1+ \phi_2 = \delta_y} \norm{\phi_1(x) (1+|x|^{m})^{-1}}_{2} +\norm{\widehat{\phi}_2(\xi) (1+|\xi|^{m})^{-1}}_{2}. $$

This idea leads us to the following statement:




\begin{lemma}
\label{lemma:dual}
    Suppose  $V$ and $W$ are  radial  non-negative weight functions
    on $\mathbb{R}^d$ and
    as radial functions non-decreasing on $(0,\infty)$.
Assume further that
      \begin{equation}
    \label{condition1}
   \norm{\mathbbm{1}_{|x|\geq 1}W^{-1}(x)}_2+ \norm{\mathbbm{1}_{|\xi|\geq 1} V^{-1}(\xi)}_2<\infty.
        \end{equation}
        Then, for any $y\in \mathbb{R}^d$,
    \begin{eqnarray}
   \nonumber
        \sup_{f\in L_1+L_2} \frac{|f(y)|}{\norm{fW}_2+ \norm{\widehat{f}V}_2} &\leq& \inf_ {\phi \in L^1}
        \big(
        \norm{\left(e^{2 \pi i \langle\xi, y\rangle}- \widehat{\phi}(\xi)\right)V^{-1}(\xi)}_2 + \norm{\phi(-x) W^{-1}(x)}_2\big)\\
         \label{eq:duals}
        &=&\inf_ {\phi \in L^1}\big(\norm{(1- \widehat{\phi}(\xi))V^{-1}(\xi)}_2 + \norm{\phi(y-x ) W^{-1}(x)}_2\big).
    \end{eqnarray}
\end{lemma}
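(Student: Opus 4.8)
The equality on the second line is trivial: substituting $\phi(x) \mapsto \phi(y-x)$ (equivalently, modulating $\widehat\phi$ by $e^{2\pi i\langle\xi,y\rangle}$) turns the first expression into the second, since $\norm{\phi(-x)W^{-1}(x)}_2$ and $\norm{\phi(y-x)W^{-1}(x)}_2$ correspond under that change of variable and $|e^{2\pi i\langle\xi,y\rangle}-\widehat\phi(\xi)| = |1 - \widehat{\phi(y-\cdot)}(\xi)|$ after the same substitution. So the content is the inequality on the first line.

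For the inequality itself, the plan is a direct duality/Plancherel argument: fix $f \in L_1 + L_2$ and fix any $\phi \in L^1$. The idea is to write $f(y)$ as a sum of two pairings, one against $f$ on the physical side and one against $\widehat f$ on the frequency side, in such a way that Cauchy--Schwarz (with the weights split as $W \cdot W^{-1}$ and $V\cdot V^{-1}$) produces exactly the right-hand side. Concretely, I would start from the reproducing-type identity
$$
f(y) = \int_{\mathbb{R}^d} \widehat f(\xi)\, e^{2\pi i\langle \xi,y\rangle}\, d\xi,
$$
valid when $f$ is nice enough, and insert $\widehat\phi(\xi)$:
$$
f(y) = \int \widehat f(\xi)\big(e^{2\pi i\langle\xi,y\rangle} - \widehat\phi(\xi)\big)\,d\xi + \int \widehat f(\xi)\,\widehat\phi(\xi)\,d\xi.
$$
The first integral is bounded by $\norm{\widehat f(\xi) V(\xi)}_2\,\norm{(e^{2\pi i\langle\xi,y\rangle}-\widehat\phi(\xi))V^{-1}(\xi)}_2$ by Cauchy--Schwarz. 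For the second integral, I would use the multiplication formula $\int \widehat f\,\widehat\phi = \int f\,\phi(-\cdot)$ (i.e. $\int \widehat f(\xi)\widehat\phi(\xi)\,d\xi = \int f(x)\,\widetilde{\widehat{\widehat\phi}}\dots$ — more cleanly: $\int \widehat{f}\,\widehat{\phi} = \int f \cdot (\phi\ \text{reflected})$ since $\widehat{\widehat{\phi}}(x) = \phi(-x)$), giving the bound $\norm{f(x)W(x)}_2\,\norm{\phi(-x)W^{-1}(x)}_2$. Summing and taking the supremum over $f$ and the infimum over $\phi$ yields the claim.

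The main technical obstacle is \emph{justifying the Fourier-inversion and multiplication identities for the class $L_1 + L_2$}, where $f$ need not be integrable or square-integrable individually and $\widehat f$ is only a tempered distribution a priori. This is exactly where the hypotheses are used. Condition \eqref{condition1}, together with $V,W$ radial and non-decreasing (hence bounded below away from the origin), forces any $f$ with finite right-hand-side-type norm — or rather, any $f$ for which $\norm{fW}_2 + \norm{\widehat f V}_2 < \infty$ — to satisfy $\widehat f \in L^1_{\mathrm{loc}}$ with $\widehat f \cdot \mathbbm 1_{|\xi|\ge 1} \in L^1$ (by Cauchy--Schwarz against $\mathbbm 1_{|\xi|\ge1}V^{-1} \in L^2$) and similarly $f \in L^1$ near infinity; combined with local $L^2$ control near the origin this gives $f, \widehat f \in L^1 \cap L^2$, so $f$ is continuous, inversion holds pointwise, and all the manipulations above are legitimate. (If $\norm{fW}_2 + \norm{\widehat f V}_2 = \infty$ the supremand is $0$ and there is nothing to prove.) I would also note that $\phi \in L^1$ ensures $\widehat\phi$ is bounded and continuous, so $(e^{2\pi i\langle\xi,y\rangle} - \widehat\phi)V^{-1} \in L^2$ precisely when that norm is finite, i.e. the infimum is over $\phi$ making the right side finite, and otherwise the bound is vacuous. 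Assembling these observations carefully is the only real work; the core estimate is one line of Cauchy--Schwarz.
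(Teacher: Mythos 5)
Your core estimate coincides with the paper's: write $f(y)$ via Fourier inversion, insert $\widehat\phi$, bound the first pairing by Cauchy--Schwarz against $V\cdot V^{-1}$ and the second, after the multiplication formula, against $W\cdot W^{-1}$; and the equality of the two infima in \eqref{eq:duals} is indeed just a change of $\phi$ (note it is the translation $\phi\mapsto\phi(\cdot+y)$, i.e.\ modulation of $\widehat\phi$ by $e^{2\pi i\langle\xi,y\rangle}$, rather than $\phi\mapsto\phi(y-\cdot)$). The genuine gap sits exactly in the step you call ``the only real work''. You assert that \eqref{condition1} plus ``local $L^2$ control near the origin'' yields $f,\widehat f\in L^1\cap L^2$. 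No such local control of $f$ is available: $W$ is only assumed non-negative, radial and non-decreasing, so it may vanish on the whole unit ball (e.g.\ $W=\mathbbm{1}_{|x|\ge1}e^{|x|^2}$ satisfies \eqref{condition1}; the motivating weights $|x|^m$ also degenerate at the origin), in which case $\norm{fW}_2<\infty$ says nothing about $f$ on $\{|x|\le1\}$, and $f\in L_1+L_2$ only gives local $L^1$, not local $L^2$. (Likewise, ``non-decreasing, hence bounded below away from the origin'' itself needs \eqref{condition1}: a non-decreasing radial $V$ vanishing at some radius $r_0>1$ vanishes on all of $\{|\xi|\le r_0\}$, contradicting \eqref{condition1}.) This is precisely the point at which the paper invokes a non-trivial input, the Amrein--Berthier theorem, to obtain $\int_{|x|\le2}\big(|f|^2+|\widehat f|^2\big)dx\lesssim\int_{|x|\ge2}\big(|f|^2+|\widehat f|^2\big)dx$ and hence $f,\widehat f\in L^1$ even for degenerate weights; your outline contains no substitute for it.

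That said, your plan can be closed without Amrein--Berthier, but with different bookkeeping from what you wrote: all the argument needs is $\widehat f\in L^1$. Indeed, for $f\in L_1+L_2$ one has $\widehat f\in L_\infty+L_2$, hence $\widehat f$ is square-integrable near the origin, while Cauchy--Schwarz against $\mathbbm{1}_{|\xi|\ge1}V^{-1}\in L_2$ gives integrability of $\widehat f$ outside the unit ball; so $\widehat f\in L^1$. Then $f$ agrees a.e.\ with the bounded continuous inverse transform of $\widehat f$, the first integral converges absolutely, and Fubini (using $\phi,\widehat f\in L^1$) justifies $\int\widehat f\,\widehat\phi\,d\xi=\int f(x)\phi(-x)\,dx$, after which both weighted Cauchy--Schwarz bounds are legitimate; no claim that $f\in L^2$, or even $f\in L^1$, is required. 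Either make this replacement explicit or simply quote the Amrein--Berthier inequality as the paper does.
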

\begin{proof}
To begin with, note that, by the Amrein--Berthier theorem (see \cite{am, jaming}),  one has
     \begin{equation}
    \label{Amrein}
\int_{|x|\leq 2}\big( |f(x)|^2 + |\widehat{f}(x)|^2
\big) dx \lesssim
\int_{|x|\geq 2}\big( |f(x)|^2 + |\widehat{f}(x)|^2
\big) dx.
 \end{equation}
   This fact and condition \eqref{condition1} imply that any function $f$ satisfying 
   $\norm{fW}_2+ \norm{\widehat{f}V}_2<\infty$ is such that  $f,\widehat{f}\in L^1(\mathbb{R}^d)$. Therefore, using the Fourier inversion formula and the
 Cauchy--Schwarz inequality, we have
   \begin{eqnarray*}
        |f(y)|
&\le&\Big|\int_{\mathbb{R}^d} \left(e^{2 \pi i \langle \xi, y \rangle}- \widehat{\phi}(\xi)\right) \widehat{f}(\xi) d \xi\Big|+ \Big|\int_{\mathbb{R}^d} \phi(-x)f(x)dx \Big|\\
&\leq& \Big(\norm{\big(e^{2 \pi i \langle\xi, y\rangle}- \widehat{\phi}(\xi)\big)V^{-1}(\xi)}_2 + \norm{\phi(-x) W^{-1}(x)}_2\Big) \Big(\norm{fW}_2+ \norm{\widehat{f}V}_2\Big),
   \end{eqnarray*}
   completing the proof.

\end{proof}

  For some choices of $V$ and $W$ it is fairly simple to derive a sharp estimate for the infimum on the right-hand side of \eqref{eq:duals}.
 In particular, we illustrate a use of Lemma \ref{lemma:dual} by the following weaker version of Theorem \ref{theorem:main2}:

  \begin{proposition1}
  \label{prop:toy}
      Let $m >\frac{d}{2}$. Then,
for any $y\in\mathbb{R}^d$,      \begin{equation}
\label{eq:toy}\sup_{f\in L_1+L_2}\frac{|f(y)|}{\norm{f(x) |x|^m}_2+ \norm{\widehat{f} (\xi)|\xi|^m}_2} \approx _{m,d}  (1+|y|)^{-m+\frac{d}{2}}.
\end{equation}
  \end{proposition1}
  \begin{proof}
In light of \eqref{Amrein}, we have
  $$\norm{f(x) |x|^m}_2+ \norm{\widehat{f}(\xi) |\xi|^m}_2 \approx_d \norm{f W_m}_2+ \norm{\widehat{f} W_m}_2,$$ where $W_m(x)=1+|x|^m$.
Observing that \eqref{condition1} holds by H\"{o}lder's inequality,
we now apply  Lemma \ref{lemma:dual} with $V=W=W_m$.

  For simplicity, we prove the result for $d=1$ and $y>0$.

  For the "$\lesssim$" part in \eqref{eq:toy}, let $\widehat{\phi}_1$ be a non-negative smooth function supported in $[-2,2]$ such that $\widehat{\phi_1}(\xi)=1$ for $|\xi|\leq 1$. For any $y>0$, set
  $\phi_y(x):=y \phi_1(yx)$.
  Then $\widehat{\phi_y}(\xi)=\widehat{\phi_1}(\xi/y)$.  \newline
  In order to apply Lemma \ref{lemma:dual}, we estimate
  $$\norm{(1- \widehat{\phi}_y)W_m^{-1}}_2 \lesssim \left(\int_{y}^\infty x^{-2m} dy\right)^{\frac{1}{2}} \approx _my^{-m + \frac{1}{2}}.$$

  Further, since $\phi_1$ is a Schwartz function, we have $\displaystyle|\phi_1(x)|\lesssim_m  \frac{1}{1+ |x|^{m+2}}$. Therefore, a simple computation shows that, for $y$ large enough, we have  \begin{eqnarray*}
   y^{-2}\norm{\phi_y(y-\cdot )
  W_m^{-1}}^2_2 &\lesssim_m&  \int_{-\infty}^\infty (1+ |x|^m)^{-2} (1+ |y (y-x)|^{m+2})^{-2} dy
 \lesssim_m y^{-2m-1},\end{eqnarray*}
whence the result follows for $y$ large enough. For small $y$ we use the fact that condition \eqref{condition1} implies that $f,\widehat{f} \in L ^1$ together with the inequality $\norm{f}_\infty \leq \norm{\widehat{f}}_1$.


  For the "$\gtrsim$" part in \eqref{eq:toy}, setting $f_y(x):=\phi_y(y-x)$, we note that $f_y(y)=y \phi_1(0) \approx y$. Again, since $\displaystyle|\phi_1(x)|\lesssim_m  \frac{1}{1+ |x|^{m+2}}$,
  for $y$ large enough we obtain
\begin{eqnarray*}
y^{-2} \norm{f_yW_m}_2^2  \lesssim_m\int _{-\infty}^\infty (1+|x|^{2m}) (1+|y (y-x)|)^{-2m-4} dx\lesssim_m y^{2m-1}.
  \end{eqnarray*}
  Since $\widehat{f}_y$ is supported in $[-2y,2y]$, we have that
  $\norm{\widehat{f}_yW_m}_2 \lesssim_m y^{m + \frac{1}{2}}.$ In conclusion, for $y$ large enough,
   $$\sup_{f} \frac{|f(y)|}{\norm{fW_m}_2+ \norm{\widehat{f}W_m}_2} \gtrsim_m \frac{y}{y^{m + \frac{1}{2}}
   } .$$
   For small $y$ the result is clear.
  \end{proof}
  Note that the constants in
  Theorem 1.3$'$
  depend on $m$.
  Another approach to prove  the "$\lesssim$" part of
\eqref{eq:toy}
  is to use the Landau--Kolmogorov inequality on the half-line, see \cite[Lemma 2.1]{kulikov2023gaussian}. The best constant in this inequality for $d=1$, see \cite{kalyabin}, yields the upper bound
in \eqref{eq:toy} with a constant which behaves  as $\sqrt{m}$.

  As we saw in item {\it (iii)} of Remark \ref{remark:intro},
  in order to obtain Theorem \ref{theorem:mainth}
  the corresponding constant must be independent of $m$.
 A more delicate choice of function $\phi$ is therefore needed.

Before defining the suitable $\phi$, let us first justify how one could arrive at such a choice.
\subsection{Idea of construction    
 of $\phi$}
To begin with, we study the
one-dimensional case with $p=q=2$ and consider $\phi$ in the following form:
  $$\phi(x)=\mathbbm{1}_{[-\delta,\delta]}(x) |x-y|^{2m} P(x) $$ with $\delta>0$  and  a polynomial $P$ to be defined later.

First, since we wish $\norm{(1- \widehat{\phi}(\xi)) |\xi|^{-m}}_2$ to be small, we choose  $P$ to be such that, for small $\xi$, $\displaystyle\widehat{\phi}(\xi)=1+ O(\xi^{\lceil m \rceil+1})$, that is,  
\begin{equation}
\label{eq:condkernel}
    \widehat{\phi}^{(k)}(0)= \int_{-\delta}^\delta  (2 \pi i x)^k P(x) |x-y|^{2m} dx= \begin{cases}
        1, \;\;k=0,\\
        0, \;\;1\leq k \leq \lceil m \rceil.
    \end{cases}
\end{equation}

Second, we observe that
\begin{equation}
\label{eq:condnorm}
\norm{\phi(y-x) |x|^{-m}}_2^2 = \int_{\mathbb{R}} |\phi(x)|^2 |x-y|^{-2m} dx = \int_{-\delta}^\delta |P(x)|^2  |x-y|^{2m} dy.
\end{equation}
Then, defining $$\langle P, Q \rangle= \int_{-\delta}^\delta P(x) Q(x)  |x-y|^{2m} dx,\quad \norm{P} = \langle P,P \rangle ,$$ equations \eqref{eq:condkernel} and \eqref{eq:condnorm} suggest to choose $P$ with the smallest norm $\norm{P}$, for which $\langle P , Q\rangle=Q(0)$ for any polynomial $Q$ of degree $\lceil m \rceil$.
Thus, we take $P$ to be
  the reproducing kernel $K_{\lceil m \rceil}$, that is,  the unique polynomial of degree $\lceil m \rceil$ which satisfies $\langle K_{\lceil m \rceil} , Q\rangle=Q(0)$ for any polynomial $Q$ of degree $\lceil m \rceil$.

\vspace{3mm}
\section{Proofs of
the upper bound in Theorem  \ref{theorem:mainth} and
Theorem \ref{theorem:main2}
}
\label{Section 3}

\subsection{Point evaluation  (Nikolskii-type)  inequalities for algebraic polynomials}
 For a non-negative  integrable function $w$ and $m_0 \in \mathbb{N},$ let $P^*$ be a polynomial of degree $m_0$ which satisfies
 $$\norm{P^*}_{p,w}:=\left(\int_{[-1,1]^d} |P^*(x)|^p w(x) dx\right)^{\frac{1}{p}}
 = \hspace{-7mm} \min_{\tiny \begin{array}{cc}
      & P(0)=1 \\
      & \deg(P)\leq m_0
 \end{array}} \hspace{-5mm}\left(\int_{[-1,1]^d} |P(x)|^p w(x)dx\right)^{\frac{1}{p}}.  $$


Then, it is well known (see, e.g.,    \cite[Chapter 3.10]{devore1993constructive}) that for any polynomial $Q$ of degree $\leq m_0$ the following reproducing formula holds:

\begin{equation}
\label{eq:repro}
   Q(0)= \int_{[-1,1]^d} K_{m_0}(x) Q(x) w(x) dx,
\end{equation}
where
  \begin{equation}
  \label{eq:kern}
      K_{m_0}(x): =K^w_{m_0}(x):=\frac{ |P^*(x)|^{p-2} P^*(x)}{\norm{P^*}_{p,w} ^{p}}.
  \end{equation}
  Here in the case $p=2$ we recover the usual reproducing kernel but, for $p\neq 2$, $K_{m_0}$ is not a polynomial of degree $m_0$.

The main result in this subsection is the following   upper bound for $\norm{K_{m_0}}_{p',w}$:
\begin{theorem}
\label{theorem:niktype}
    Let $1\le p<\infty$, $m\geq 0$, $m_0\in \mathbb{N}$,  and $\alpha \in \mathbb{R}$. Set, for $x \in \mathbb{R}^d$,
\begin{equation}
    \label{w}
w(x):= w_{m,\alpha}(x_1, \dots, x_d)=\left((x_1-\alpha)^2+\sum_{i=2}^d x_i^2 \right)^\frac{pm}{2}.
\end{equation}
Then,
    $$\norm{K_{m_0}}_{p',w} \leq \min\left(C^{m_0+m},C |\alpha|^{-m} (1+ m_0 ^{\frac{d}{p}} +m^{\frac{d}{p}})\right),$$
    where $C:=C(d,p)$.
\end{theorem}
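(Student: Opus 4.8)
The plan is to establish the two bounds $\norm{K_{m_0}}_{p',w}\le C^{m_0+m}$ and $\norm{K_{m_0}}_{p',w}\le C|\alpha|^{-m}(1+m_0^{d/p}+m^{d/p})$ separately, and both will ultimately follow from a single mechanism: a pointwise (Nikolskii/Markov–Bernstein-type) bound for $|P(0)|$ against $\norm{P}_{p,w}$ valid for all polynomials $P$ of degree $\le m_0$. Indeed, by the extremal characterization of $P^*$ and the formula \eqref{eq:kern}, one has
\[
\norm{K_{m_0}}_{p',w}=\norm{P^*}_{p,w}^{-1},
\]
since $\int_{[-1,1]^d}|K_{m_0}|^{p'}w=\norm{P^*}_{p,w}^{-p(p-1)}\int_{[-1,1]^d}|P^*|^{(p-1)p'}w=\norm{P^*}_{p,w}^{-p'}$. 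Hence the theorem is equivalent to the lower bound
\[
\norm{P^*}_{p,w}\ge \max\!\big(C^{-(m_0+m)},\,c\,|\alpha|^{m}(1+m_0^{d/p}+m^{d/p})^{-1}\big),
\]
and, by homogeneity in $P$, to the Nikolskii inequality $|P(0)|\le \norm{P^*}_{p,w}^{-1}\norm{P}_{p,w}$ for all $\deg P\le m_0$; so I would prove directly that $|P(0)|\le \Lambda\,\norm{P}_{p,w}$ with $\Lambda$ the reciprocal of the claimed minimum.

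For the first bound $\norm{K_{m_0}}_{p',w}\le C^{m_0+m}$, I would reduce to the weightless case. Write $\deg$-$M:=m_0+\lceil m\rceil$ (when $m$ is an integer the weight $w$ is itself a polynomial of degree $pm$; for general $m\ge 0$ I would first note $w(x)^{1/p}=(|x-\alpha e_1|)^{m}$ and absorb a factor, or simply bound $w$ below by a constant times an integer power on a suitable subcube — more cleanly, reduce to $m$ a nonnegative integer by monotonicity since larger $m$ only makes $\norm{P}_{p,w}$ larger after rescaling). The key classical input is the one-dimensional Nikolskii/Remez-type inequality: for a polynomial $R$ of degree $N$ on $[-1,1]^d$,
\[
|R(0)|\le C(d)\,N^{2d/p}\,\Big(\int_{[-1,1]^d}|R(x)|^p\,dx\Big)^{1/p}
\]
(which itself follows by iterating the univariate bound $\|R\|_{L_\infty[-1,1]}\le C N^{2/p}\|R\|_{L_p[-1,1]}$, e.g.\ from Nikolskii's inequality, or from a Christoffel-function/Chebyshev argument). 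Applying this to $R=P\cdot(\text{polynomial approximating }w^{1/p})$ of degree $\le m_0+\lceil pm\rceil$, one gets $|P(0)|\le C^{m_0+m}\norm{P}_{p,w}$ because $N^{2d/p}\le C^{N}\le C^{m_0+m}$; the price of the weight is handled by the fact that $w(x)^{1/p}\ge c$ on a subcube away from $\alpha e_1$ if $|\alpha|\le 1$, and is $\approx |\alpha|^m$ uniformly on $[-1,1]^d$ if $|\alpha|$ is large — but for this first bound we only need $w\ge 0$ together with a lower estimate of $\int w$ on a fixed-size region, giving a constant depending only on $d,p$ raised to the $m_0+m$ power. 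I expect the cleanest route is: pick the subcube $Q_0\subset[-1,1]^d$ of sidelength $1$ farthest from $\alpha e_1$, on which $w\gtrsim$ (const)$^{m}$ when $|\alpha|\le C$, translate $0$ into it, and apply the weightless Nikolskii inequality on $Q_0$; since $N^{2d/p}\lesssim C^{m_0+m}$ this yields the first bound regardless of $\alpha$.

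For the second bound $\norm{K_{m_0}}_{p',w}\le C|\alpha|^{-m}(1+m_0^{d/p}+m^{d/p})$, the point is a sharper, non-exponential Nikolskii inequality that exploits the genuine power weight. I would use the substitution that turns $w$ into an honest power: on $[-1,1]^d$ we have $(|x-\alpha e_1|)^{pm}$, and for $|\alpha|$ large this is $\approx |\alpha|^{pm}(1+O(1/|\alpha|))^{pm}$, which is comparable to $|\alpha|^{pm}$ times a bounded factor only when $|\alpha|\gtrsim m$ — in the regime $|\alpha|\gtrsim m$ the weight is essentially the constant $|\alpha|^{pm}$ up to factors $e^{\pm O(m^2/|\alpha|)}$, wait, that is not uniformly bounded. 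The correct device is instead to work on the scale where the weight varies by $O(1)$: restrict to the cube centered at $\alpha e_1$ (or its intersection with $[-1,1]^d$), rescale by $|\alpha|$, and apply the \emph{doubling-weight Nikolskii inequality} of Mastroianni–Totik for generalized Jacobi weights, which gives, for a weight $u$ satisfying a doubling condition with constant $L$,
\[
|P(0)|^p\,u(\text{near }0)\,\lesssim_{L}\, m_0^{d}\int |P|^p\,u,
\]
so that $|P(0)|\lesssim m_0^{d/p}\,u(\cdot)^{-1/p}\norm{P}_{p,u}$; here $u(x)=(|x-\alpha e_1|)^{pm}$ has doubling constant controlled by $2^{pm}$ near a fixed reference point, but \emph{at the point $0$}, since $|0-\alpha e_1|=|\alpha|$, the relevant Christoffel-function lower bound is $\Lambda_{m_0}(u,0)\gtrsim |\alpha|^{pm}\,m_0^{-d}\,(\text{something})$, and the $(1+m_0^{d/p}+m^{d/p})$ factor enters because the doubling constant of the $pm$-th power weight forces an extra $m^{d/p}$ when $m\gg m_0$. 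So concretely I would: (i) if $|\alpha|\le 1$, this bound is weaker than or comparable to the first one, so nothing new is needed; (ii) if $|\alpha|>1$, change variables $x\mapsto \alpha e_1+|\alpha| t$ if that keeps things inside $[-1,1]^d$, or more robustly apply the Christoffel function estimate for the univariate generalized Jacobi weight $(1-s)^{A}(1+s)^{B}|s-s_0|^{pm}$ on $[-1,1]$ at the point corresponding to $0$, using the known sharp two-sided bounds (Mastroianni–Totik, Nevai) $\Lambda_{m_0}(w,t)\approx (m_0)^{-1}\,w\!\big(t+\tfrac{\sqrt{1-t^2}}{m_0}\big)\cdot(\dots)$, and then multiply over coordinates.

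The main obstacle I anticipate is the second bound, specifically getting the dependence \emph{exactly} as $|\alpha|^{-m}(1+m_0^{d/p}+m^{d/p})$ rather than $|\alpha|^{-m}$ times something exponential in $m$: one must avoid the naive estimate $w\gtrsim |\alpha|^{pm}e^{-O(m)}$ (valid only near $\alpha e_1$, useless near $0$) and instead use a Christoffel-function lower bound \emph{at the origin}, where the distance to the singularity $\alpha e_1$ is the full $|\alpha|$; the generalized-Jacobi-weight machinery gives $\Lambda_{m_0}\big((|s-\alpha|)^{pm},0\big)\approx m_0^{-1}|\alpha|^{pm}$ as long as $m_0\gtrsim m$ (so the singularity of "strength $pm$" is resolved at scale $1/m_0$), producing the $m_0^{d/p}$ term, while for $m\gtrsim m_0$ one treats the weight as having effective polynomial degree $\sim m$ and picks up the $m^{d/p}$ term instead — hence the $\max$, i.e.\ the $(1+m_0^{d/p}+m^{d/p})$. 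Assembling the one-dimensional estimates into the $d$-dimensional statement is then routine by Fubini and the product structure of $[-1,1]^d$ and of $w$ in the directions $x_2,\dots,x_d$ (which carry only the trivial weight after one peels off the $(x_1-\alpha)^2$ part by a sub-averaging argument). Finally, combining (i) the weightless exponential Nikolskii bound and (ii) the doubling-weight polynomial bound, and recalling $\norm{K_{m_0}}_{p',w}=\norm{P^*}_{p,w}^{-1}$ together with $|P(0)|\le \norm{K_{m_0}}_{p',w}\norm{P}_{p,w}$, we obtain $\norm{K_{m_0}}_{p',w}\le\min\big(C^{m_0+m},\,C|\alpha|^{-m}(1+m_0^{d/p}+m^{d/p})\big)$, as claimed.
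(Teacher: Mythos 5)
Your reduction via $\norm{K_{m_0}}_{p',w}=\norm{P^*}_{p,w}^{-1}=\sup_{\deg P\le m_0}|P(0)|/\norm{P}_{p,w}$ and your treatment of the first bound are essentially the paper's argument: a Remez-type inequality to pass from the point $0$ (indeed from the sup over $[-1,1]^d$) to a subcube on which $\operatorname{dist}(\cdot,\alpha e_1)$ is bounded below, combined with the weightless Nikolskii inequality, iterated coordinatewise; the phrase ``translate $0$ into $Q_0$'' should be replaced by exactly this Remez step, since $0$ need not lie in $Q_0$, but the idea is the same and the exponential constant $C^{m_0+m}$ comes out correctly.

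The second bound is where your plan has genuine gaps. First, dismissing the case $|\alpha|\le 1$ as ``weaker than or comparable to the first bound'' is wrong: for, say, $m$ bounded and $|\alpha|\in(\tfrac12,1]$ the claimed bound is $C(1+m_0^{d/p}+m^{d/p})$, polynomial in $m_0$, while the first bound is only $C^{m_0}$; since the theorem asserts the minimum of the two for all $\alpha$, you must prove the second bound in this regime as well. Second, the machinery you invoke does not deliver the required constant: Mastroianni--Totik-type doubling-weight Nikolskii/Christoffel estimates carry constants depending on the doubling constant, which for $|s-\alpha|^{pm}$ is of order $2^{pm}$ --- precisely the exponential dependence in $m$ you must avoid --- and your claim that this merely ``forces an extra $m^{d/p}$'' is not a citable statement; the whole difficulty is to get a constant \emph{linear} in the generalized degree. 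Third, your multidimensional assembly is based on a false premise: $w(x)=\big((x_1-\alpha)^2+\sum_{i\ge2}x_i^2\big)^{pm/2}$ is not a product weight and is not ``trivial'' in $x_2,\dots,x_d$; you cannot ``peel off the $(x_1-\alpha)^2$ part'' and multiply one-dimensional estimates. The paper's device closes all three gaps at once: the pointwise Nikolskii inequality for \emph{generalized algebraic polynomials} (Nevai--Erd\'elyi--Magnus), $F(0)\le \frac{(2+N)e}{2\pi}\int_{-1}^1 F$, with $N$ the generalized degree and with complex nodes $\alpha_j$ allowed. Applied in each variable to $F=|P|^p w$ (whose one-dimensional cross-sections are generalized polynomials of degree at most $pm_0+pm$ with complex-conjugate nodes, e.g.\ $x_2\mapsto(x_2^2+r^2)^{pm/2}=|x_2-ir|^{pm}$ up to pairing), it gives $|P(0)|^p w(0)\le \big(\tfrac{(2+pm_0+pm)e}{2\pi}\big)^d\int_{[-1,1]^d}|P|^p w$, and since $w(0)=|\alpha|^{pm}$ this is exactly $\norm{K_{m_0}}_{p',w}\le C|\alpha|^{-m}(1+m_0^{d/p}+m^{d/p})$ for every $\alpha$, with no doubling constants and no case split. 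Without this (or an equivalent) explicit-constant input, your outline for the second bound does not go through.
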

\begin{proof}
Taking into account 
 that
   $$\norm{K_{m_0}}_{p',w} = \frac{1}{\hphantom{_{p,w}}\norm{P^*}_{p,w}}=\sup_{\deg(P)\leq m_0}\frac{|P(0)|}{\hphantom{_{p,w}}\norm{P}_{p,w}},$$
we note that 
 if
for any polynomial $P$ of degree $m_0$
the  Nikolskii-type inequality
\begin{equation}
    \label{eq:nik}
    |P(0)|\leq C^* \norm{P}_{p,w}
\end{equation} holds, one has $\norm{K_{m_0}}_{p',w} \leq C^*$.

    In order to obtain
 $$\norm{K_{m_0}}_{p',w} \leq C(d,p)^{m_0+m},$$ we proceed as follows:

In the one-dimensional case, 
a
combination of the Remez and
Nikolskii
inequalities
(see \cite[Chapter 4, Section 2]{devore1993constructive} and
\cite[Theorem 2.6, p. 102]{devore1993constructive}, respectively) yields
    $$
    \norm{P}_\infty
    \leq C^{m_0}
    \big(\int_{-1}^1 |P(x)|^p dx\big)^{\frac{1}{p}}
    \leq C^{m_0} \big(\int_{1/2}^1 |P(x)|^p dx\big)^{\frac{1}{p}}.
    $$
Iterating this estimate over each variable, we obtain, for any $d\in \mathbb{N}$,
$$\sup_{x \in [-1,1]^d} |P(x)|\leq C^{m_0} \left(\int_{[1/2,1]^d} |P(x)|^p dx \right)^{\frac{1}{p}}.$$
Finally,  
assuming without loss of generality that $\alpha\leq 0$, we conclude that
    $$\sup_{x \in [-1,1]^d} |P(x)|\leq C^{m_0+m} \left(\int_{[-1,1]^d} |P(x) |^p w_{m,\alpha}(x) dx \right)^{\frac{1}{p}}.$$
In order to obtain  the bound
        $$
        \norm{K_{m_0}}_{p',w} \leq C |\alpha|^{-m} (1+ m_0 ^{\frac{d}{p}} +m^{\frac{d}{p}}),
        $$
we make use of the
 Nikolskii inequality
  \begin{equation}
\label{vsp}
F(0) \leq \frac{(2+N)e}{2 \pi} \int_{-1}^1 F(x) dx\end{equation}
 for generalized algebraic polynomials
 defined by
 $$F(z)=|\omega| \prod_{j=1}^k|z-\alpha_j|^{r_j}
 $$
 with $\omega\neq 0$, $r_j\geq 0$, $\alpha_j \in \mathbb{C}$ and generalized degree $\sum_{j=1}^k r_j =N$; see
 \cite[p. 606]{nevai}.
      It is straightforward to obtain
 the  multidimensional version of
 \eqref{vsp}, namely,
    $$F(0, \dots, 0) \leq \left(\frac{(2+N)e}{2 \pi} \right)^d \int_{[-1,1]^d}F(x) dx,$$ where $N$ is the maximum of the generalized degrees of each variable.
    Hence,
   setting $F=|P|^p w$ with $w$ defined by \eqref{w}, we conclude that $$|P(0)|^p w(0) \leq \left(\frac{(2+pm_0+pm)e}{2 \pi} \right) ^d \int_{[-1,1]^d} |P(x)|^p w(x) dx,$$ whence the desired result follows.
\end{proof}

\subsection{Definition and properties of $\phi$}

    \begin{lemma}

    \label{prop:phi}
    Let $1\le p<\infty$, $m\geq 0$ and $\delta,y>0$. Set $\alpha:=y/\delta$.  Let, for $\max(1,\lceil m \rceil)
    \leq m_0
    \in \mathbb{N}
    $ and $x\in \mathbb{R}^d$,
    \begin{equation}
        \label{eq:defphi}
        \phi(x):= \mathbbm{1}_{[-\delta,\delta]^d}(x)  \delta^{-d} K_{m_0}( {x}/{\delta}) w({x}/{\delta}),
     \end{equation} where $K_{m_0}$ and $w:=w_{m,\alpha}$ are given by \eqref{eq:kern} and \eqref{w}, respectively.
    Then
    the following properties hold$:$
\begin{itemize}
\item[\textnormal{(1)}]\; $\displaystyle \left(\int_{\mathbb{R}^d} |\phi(x) |^{p'} w(x/\delta)^{-\frac{p'}{p}} dx \right)^{\frac{1}{p'}}=\delta^{-d/p} \norm{K_{m_0}}_{p',w}
\\
\phantom{space}
\qquad\qquad\qquad\qquad\qquad\;
\leq
\delta^{-d/p} \min\Big(C^{m_0} ,C m_0^{\frac{d}{p}}  (\delta/y)^m\Big);$

\item[\textnormal{(2)}]\; $\displaystyle\displaystyle \widehat{\phi}(0)=\int_{\mathbb{R}^d} \phi(x)dx =1;$
\item[\textnormal{(3)}]\; $\displaystyle |\widehat{\phi}(\xi)| \leq C^{m_0}, \quad \xi \in \mathbb{R}^d;$

\item[\textnormal{(4)}] \; $\displaystyle |1- \widehat{\phi}(\xi)|   \leq   C^{m_0} ( \delta m_0 ^{-1} |\xi|)^{m_0}, \quad \xi \in \mathbb{R}^d;$
    \end{itemize}
with some $C:=C(d,p)$.
    \end{lemma}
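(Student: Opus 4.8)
The plan is to reduce each item, by the change of variables $x=\delta t$, to a statement about the reproducing kernel $K_{m_0}$ on the fixed cube $[-1,1]^d$, and then to insert the two bounds of Theorem~\ref{theorem:niktype} together with the hypothesis $0\le m\le m_0$ (recall $m_0\ge\max(1,\lceil m\rceil)\ge 1$). For item~(1), substituting $x=\delta t$ in $\int_{\mathbb{R}^d}|\phi(x)|^{p'}w(x/\delta)^{-p'/p}\,dx$ and using the explicit form \eqref{eq:kern} of $K_{m_0}$ together with the identities $(p-1)p'=p$ and $p'-p'/p=1$, the integrand simplifies and one obtains $\big(\int_{\mathbb{R}^d}|\phi|^{p'}w(\cdot/\delta)^{-p'/p}\big)^{1/p'}=\delta^{-d/p}\norm{K_{m_0}}_{p',w}=\delta^{-d/p}/\norm{P^*}_{p,w}$. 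Theorem~\ref{theorem:niktype} bounds $\norm{K_{m_0}}_{p',w}$ by $\min\big(C^{m_0+m},\,C|\alpha|^{-m}(1+m_0^{d/p}+m^{d/p})\big)$; since $0\le m\le m_0$ and $m_0\ge1$ we have $C^{m_0+m}\le(C^2)^{m_0}$ and $1+m_0^{d/p}+m^{d/p}\le 3m_0^{d/p}$, so, recalling $\alpha=y/\delta$, this is exactly the claimed bound $\delta^{-d/p}\min\big(C^{m_0},\,Cm_0^{d/p}(\delta/y)^m\big)$ after renaming the constant.

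For item~(2), and more generally for the vanishing of moments, the same substitution yields $\int_{\mathbb{R}^d}\phi(x)\,x^\beta\,dx=\delta^{|\beta|}\int_{[-1,1]^d}K_{m_0}(t)\,t^\beta\,w(t)\,dt$ for every multi-index $\beta$, and when $|\beta|\le m_0$ the reproducing formula \eqref{eq:repro}, applied to $Q(t)=t^\beta$, evaluates this as $\delta^{|\beta|}Q(0)$, which is $1$ for $\beta=0$ (this is item~(2)) and $0$ for $1\le|\beta|\le m_0$. For item~(3), Hölder's inequality gives $\norm{\phi}_1\le\norm{\phi\,w(\cdot/\delta)^{-1/p}\mathbbm{1}_{[-\delta,\delta]^d}}_{p'}\,\norm{w(\cdot/\delta)^{1/p}\mathbbm{1}_{[-\delta,\delta]^d}}_p$; by item~(1) the first factor equals $\delta^{-d/p}\norm{K_{m_0}}_{p',w}$ and the second equals $\delta^{d/p}\big(\int_{[-1,1]^d}w\big)^{1/p}$, so $\norm{\phi}_1\le\norm{K_{m_0}}_{p',w}\big(\int_{[-1,1]^d}w\big)^{1/p}$. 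Since $w_{m,\alpha}(t)\le(\sqrt d+1+|\alpha|)^{pm}$ on $[-1,1]^d$, the last factor is at most $2^{d/p}(\sqrt d+1+|\alpha|)^m$; splitting according to whether $|\alpha|\le\sqrt d+1$ (where one uses $\norm{K_{m_0}}_{p',w}\le C^{m_0+m}$ and $(\sqrt d+1+|\alpha|)^m\le(2\sqrt d+2)^m$) or $|\alpha|>\sqrt d+1$ (where one uses $\norm{K_{m_0}}_{p',w}\le 3Cm_0^{d/p}|\alpha|^{-m}$ and $(\sqrt d+1+|\alpha|)^m\le(2|\alpha|)^m$), and using $0\le m\le m_0$, $m_0\ge1$ and the elementary bound $m_0^{d/p}\le(e^{d/p})^{m_0}$, one concludes $\norm{\phi}_1\le C^{m_0}$ with $C=C(d,p)$; in particular $\phi\in L^1$, so $\widehat{\phi}$ is well defined and continuous.

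Item~(4) is the crux. Since $\int\phi=1$ and the moments of $\phi$ of orders $1,\dots,m_0$ all vanish, we replace $e^{-2\pi i\langle x,\xi\rangle}$ by its Taylor polynomial of degree $m_0-1$, whose remainder $R_{m_0}(x,\xi)$ satisfies $|R_{m_0}(x,\xi)|\le|2\pi\langle x,\xi\rangle|^{m_0}/m_0!$; expanding each power $\langle x,\xi\rangle^k$ by the multinomial formula, the terms with $1\le k\le m_0-1$ integrate against $\phi$ to $0$ and the $k=0$ term integrates to $1$, so $\widehat{\phi}(\xi)=1+\int_{\mathbb{R}^d}\phi(x)R_{m_0}(x,\xi)\,dx$. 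Therefore $|1-\widehat{\phi}(\xi)|\le\int_{\mathbb{R}^d}|\phi(x)|\,|2\pi\langle x,\xi\rangle|^{m_0}/m_0!\,dx$, and since $\operatorname{supp}\phi\subset[-\delta,\delta]^d$ forces $|\langle x,\xi\rangle|\le\sqrt d\,\delta|\xi|$, this is at most $(2\pi\sqrt d\,\delta|\xi|)^{m_0}\norm{\phi}_1/m_0!$. Plugging in $\norm{\phi}_1\le C^{m_0}$ from item~(3) and the Stirling bound $m_0!\ge(m_0/e)^{m_0}$ gives $|1-\widehat{\phi}(\xi)|\le C^{m_0}(\delta m_0^{-1}|\xi|)^{m_0}$ with a new $C=C(d,p)$, which is item~(4).

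I expect item~(4) to be the main obstacle, for two reasons. First, one must be sure that the reproducing-kernel choice \eqref{eq:defphi} of $\phi$ genuinely annihilates all moments of orders $1,\dots,m_0$ — this is the whole point of the construction, handled above through \eqref{eq:repro}. Second, and more importantly for the later applications, the constant multiplying $(\delta m_0^{-1}|\xi|)^{m_0}$ must be of the form $C^{m_0}$ with $C$ depending only on $d$ and $p$, hence \emph{independent of both $m_0$ and $m$}. This uniformity is precisely why one is forced to control $\norm{\phi}_1$ via item~(3), which in turn requires \emph{both} complementary bounds of Theorem~\ref{theorem:niktype} (the exponential bound $C^{m_0+m}$ when $|\alpha|$ is bounded, and the bound $C|\alpha|^{-m}(1+m_0^{d/p}+m^{d/p})$ when $|\alpha|$ is large), together with the Stirling lower bound for $m_0!$ and the inequality $m\le m_0$.
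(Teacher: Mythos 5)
Your proposal is correct and follows essentially the same route as the paper: item (1) by rescaling to $[-1,1]^d$ and invoking both bounds of Theorem \ref{theorem:niktype} (with $m\le m_0$), item (2) and the vanishing moments from the reproducing formula \eqref{eq:repro}, item (3) by H\"older together with the bound $\big(\int_{[-1,1]^d}w\big)^{1/p}\lesssim C^m(1+(y/\delta)^m)$ and a case split in $\alpha$, and item (4) from the Taylor remainder of $e^{-2\pi i\langle x,\xi\rangle}$, the support condition $|\langle x,\xi\rangle|\le\sqrt d\,\delta|\xi|$, the $L^1$ bound of item (3), and $m_0!\ge (m_0/e)^{m_0}$. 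The only differences are presentational (your explicit case split $|\alpha|\lessgtr\sqrt d+1$ versus the paper's $\min$ expression), so nothing further is needed.
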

        \begin{proof}
            The first item follows from Theorem \ref{theorem:niktype} by a change of variables.
The second statement  of the lemma follows from the reproducing formula \eqref{eq:repro}.

To see (3), we derive from Hölder's inequality and item (1) that
\begin{eqnarray}
    |\widehat{\phi}(\xi)| \leq \norm{\phi}_1 &\leq &\min\Big(C^{m_0} ,C m_0^{\frac{d}{p}}  (\delta/y)^m\Big)\left(\int_{[-1,1]^d} w \right)^{\frac{1}{p}}
    \nonumber\\
    \label{vsp1}
    &\leq& C^{m_0} (1+(\delta /y)^{-m})\min\Big(1 , m_0^{\frac{d}{p}}  (\delta /y)^m\Big)\leq C^{m_0},
\end{eqnarray}
 where we have used
$\displaystyle\Big(\int_{[-1,1]^d} w \Big)^{\frac{1}{p}}\leq     C^m \left( 1+(y/\delta)^{m}\right).
$

For the fourth statement,
 using once again \eqref{eq:repro} and the estimate $\norm{\phi}_1 \leq C^{m_0}$ (see \eqref{vsp1}),
 we arrive at
       \begin{eqnarray*}
 \left|1- \widehat{\phi}(\xi)\right| &=&
 \left|\int_{[-\delta, \delta]^d} \phi(x) \left(\sum_{k=0}^{m_0-1} \frac{( 2 \pi i \langle x, \xi \rangle)^k}{k!}- e^{-2 \pi i \langle x, \xi \rangle}\right) dx\right|
 \\&\leq&  \int_{[-\delta, \delta]^d} |\phi(x)| \frac{| 2 \pi \langle \xi,x \rangle |^{m_0}}{m_0!}  dx\\&\leq&    C^{m_0}\frac{(2 \pi \sqrt{d}\delta |\xi|)^{m_0}}{m_0!}.
  \end{eqnarray*}
The result now follows from the inequality $n!\geq n^n e^{-n}$.
     \end{proof}
  We also need a slight  modification of the previous lemma.\begin{lemma}

    \label{prop:multiphi} Let
    $1\le p<\infty$,
    $m\geq 0$, $\delta>0$, and  $\max(1,\lceil m \rceil)\leq m_0\in \mathbb{N}$. Define the one-dimensional functions \begin{equation}
        \label{eq:multidefphi}
        \phi_1(x):= \mathbbm{1}_{[-\delta,\delta]}(x)  \delta^{-1} K^{w_{m,1}}_{m_0}( {x}/{\delta}) w_{m,1}({x}/{\delta})
     \end{equation} and \begin{equation}
    \phi_0(x):= \mathbbm{1}_{[-\delta,\delta]}(x)  \delta^{-1} K^{w_{0,0}}_{m_0}( {x}/{\delta}) w_{0,0}({x}/{\delta}).
     \end{equation}
  Let now
$$  \phi_d(x_1,\dots, x_d):= \phi_1(x_1) \phi_0(x_2) \cdots \phi_0(x_d).$$ Observe that $\phi_d$ is supported in $[-\delta,\delta]^d$.
    Then,
    the following hold$:$
\begin{itemize}
\item[\textnormal{(1)}]\; $\displaystyle\left(\int_{\mathbb{R}} \phi_1(x_1)^{p'} w_{m,1}({x_1}/{\delta})
^{-\frac{p'}{p}}
 dx_1\right)^{\frac{1}{p'}}\leq C m_0^{\frac{1}{p}} \delta^{-\frac{1}{p}};$

\item[\textnormal{(2)}]\; $\displaystyle\left(\int_{\mathbb{R}} \phi_0(x_2)^{p'} w_{0,0}({x_2}/{\delta})
^{-\frac{p'}{p}}
 dx_2\right)^{\frac{1}{p'}}\leq C m_0^{\frac{1}{p}} \delta^{-{\frac{1}{p}}};$

    \item[\textnormal{(3)}]\; $\displaystyle|\widehat{\phi_d}(\xi)|\leq \norm{\phi_d}_1 \leq C^{m_0},\quad \xi \in \mathbb{R}^d;$
\item[\textnormal{(4)}] \; $\displaystyle |1- \widehat{\phi_d}(\xi)|   \leq   C^{m_0} ( \delta m_0 ^{-1}|\xi|)^{m_0}, \quad \xi \in \mathbb{R}^d;$
   \end{itemize}
with some $C:=C(d,p)$.
    \end{lemma}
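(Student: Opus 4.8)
The plan is to derive Lemma \ref{prop:multiphi} from the machinery already in place for Lemma \ref{prop:phi}, essentially by reading off a product/tensor structure. The key observation is that the weight $w_{m,\alpha}$ in \eqref{w} is already built as a tensor product in disguise: $w_{0,0}(x) = (x_1^2 + \dots + x_d^2)^0 = 1$ is the one-dimensional unit weight to the appropriate power, while $\phi_1$ uses $w_{m,1}$ in one variable only. So the estimates for $\phi_d = \phi_1 \otimes \phi_0 \otimes \dots \otimes \phi_0$ should factor through the one-dimensional estimates.

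First I would prove items (1) and (2). Item (1) is the $d=1$ case of item (1) of Lemma \ref{prop:phi}: applying Theorem \ref{theorem:niktype} with $d=1$, weight $w_{m,1}$ (so $|\alpha|=1$) gives $\norm{K^{w_{m,1}}_{m_0}}_{p',w_{m,1}} \leq \min(C^{m_0+m}, C(1+m_0^{1/p}+m^{1/p}))$, and since $m_0 \geq \lceil m \rceil \geq m$ we can absorb the $m^{1/p}$ term into $m_0^{1/p}$, yielding the claimed bound $C m_0^{1/p}\delta^{-1/p}$ after the change of variables $x_1 \mapsto x_1/\delta$ (which produces the $\delta^{-1/p}$ factor exactly as in Lemma \ref{prop:phi}(1)). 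Item (2) is the same with $w_{0,0} \equiv 1$ and $m=0$, where Theorem \ref{theorem:niktype} gives $\norm{K^{w_{0,0}}_{m_0}}_{p',w_{0,0}} \leq C(1+m_0^{1/p})\lesssim C m_0^{1/p}$.

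Next, item (3): $\norm{\phi_d}_1 = \norm{\phi_1}_1 \prod_{j=2}^d \norm{\phi_0}_1$ by Fubini. Each one-dimensional $L^1$ norm is bounded exactly as in \eqref{vsp1} of Lemma \ref{prop:phi}: by Hölder, $\norm{\phi_1}_1 \leq \norm{K^{w_{m,1}}_{m_0}}_{p',w_{m,1}} (\int_{[-1,1]} w_{m,1})^{1/p} \cdot \delta^{1-1/p}\delta^{-1+1/p}$ — tracking the scaling, the $\delta$ powers cancel and one is left with $C m_0^{1/p}$ times $(\int_{[-1,1]}(1+|x_1-1|^2)^{pm/2})^{1/p}\leq C^m(1+1) \leq C^m$ (since $\alpha = 1$, unlike the general $y/\delta$ case this is a harmless bounded factor). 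Combined with $m_0 \geq m$ this gives $\norm{\phi_1}_1 \leq C^{m_0}$, and similarly $\norm{\phi_0}_1 \leq C^{m_0}$; the product over $d$ fixed coordinates is still $C^{m_0}$ with $C$ depending on $d$. For item (4), I would repeat verbatim the argument for Lemma \ref{prop:phi}(4): write $1 - \widehat{\phi_d}(\xi) = \int_{[-\delta,\delta]^d}\phi_d(x)(\sum_{k=0}^{m_0-1}\frac{(2\pi i \langle x,\xi\rangle)^k}{k!} - e^{-2\pi i \langle x,\xi\rangle})\,dx$, using $\widehat{\phi_d}(0)=1$ (which holds since $\widehat{\phi_1}(0)=\widehat{\phi_0}(0)=1$ by the reproducing formula \eqref{eq:repro} in each variable) together with vanishing of lower derivatives at $0$; then bound the integrand by $|\phi_d(x)|\frac{|2\pi\langle x,\xi\rangle|^{m_0}}{m_0!} \leq |\phi_d(x)|\frac{(2\pi\sqrt d\,\delta|\xi|)^{m_0}}{m_0!}$, use $\norm{\phi_d}_1 \leq C^{m_0}$ from item (3), and apply $m_0! \geq m_0^{m_0}e^{-m_0}$ to convert $C^{m_0}/m_0! \cdot (2\pi\sqrt d\,\delta|\xi|)^{m_0}$ into $C^{m_0}(\delta m_0^{-1}|\xi|)^{m_0}$.

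I do not anticipate a genuine obstacle here — this lemma is a routine "tensorized" variant of Lemma \ref{prop:phi}, and the only mild care needed is bookkeeping of the $\delta$-scaling factors through the change of variables and checking that the vanishing-moment/reproducing-kernel structure survives under the tensor product (it does, because $\widehat{\phi_d} = \widehat{\phi_1}(\xi_1)\widehat{\phi_0}(\xi_2)\cdots\widehat{\phi_0}(\xi_d)$ and each factor equals $1 + O(\xi_j^{m_0})$ near the origin, so their product is $1 + O(|\xi|^{m_0})$). The only reason this separate lemma is stated at all, rather than folded into Lemma \ref{prop:phi}, is that in the multidimensional applications one wants the decay weight $|x|^m$ concentrated in a single coordinate direction — which is exactly what the asymmetric product $\phi_1 \otimes \phi_0^{\otimes(d-1)}$ provides — so the proof is essentially an assembly of one-dimensional facts rather than anything new.
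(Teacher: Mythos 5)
Your proof is correct and follows exactly the route the paper intends: the paper states this lemma without a separate proof as a "slight modification" of Lemma \ref{prop:phi}, and your argument — one-dimensional instances of Theorem \ref{theorem:niktype} (with $\alpha=1$, resp.\ $w_{0,0}\equiv 1$) plus the $\delta$-rescaling for (1)–(2), Hölder and $\bigl(\int_{-1}^{1}w_{m,1}\bigr)^{1/p}\le C^{m}$ for (3), and the tensorized reproducing/moment-vanishing property feeding the Taylor-remainder bound with $m_0!\ge m_0^{m_0}e^{-m_0}$ for (4) — is precisely that modification. The only nitpick is the cosmetic misquote of the weight as $(1+|x_1-1|^2)^{pm/2}$ instead of $|x_1-1|^{pm}$ in step (3), which does not affect the bound.
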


With this we conclude the preliminaries and proceed to prove the main results.


\subsection{Proof of  the "$\lesssim$" part of Theorem \ref{theorem:mainth}
}
\label{section:ifpart}

\begin{lemma}
\label{lemma:mainl}
Let $1\le p,q\le \infty$ and $0<A<B$. Then there exists $C:=C(A,B,p,q,d)$ such that, for any $y \in \mathbb{R}^d$,
$$e^{\pi A |y|^2} (1+|y|)^{-\frac{d}{p}}|f(y)|\leq  C \left(\norm{f e^{ \pi A |x|^2}}_p  +\norm{\widehat{f} e^{\pi B  |\xi| ^2}}_q\right).$$
\end{lemma}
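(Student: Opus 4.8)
The plan is to apply the duality-style estimate behind Lemma \ref{lemma:dual} (more precisely the underlying Fourier-inversion plus H\"older argument, now in the $L_p$--$L_q$ setting rather than $L_2$--$L_2$), with the test multiplier $\phi$ constructed in Lemma \ref{prop:phi}. Fix $y\in\mathbb{R}^d$ with $|y|$ large; the case of bounded $|y|$ is handled separately and easily, since the hypotheses force $f,\widehat f\in L^1$ (via the Amrein--Berthier inequality \eqref{Amrein} together with the fact that the Gaussian weight dominates any power weight on $\{|x|\ge 2\}$), so that $\|f\|_\infty\le\|\widehat f\|_1$ and the required bound is trivial with a constant depending on $A,B,p,q,d$. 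For large $|y|$, writing $f(y)=\int(\widehat f(\xi)e^{2\pi i\langle\xi,y\rangle}-\widehat\phi(\xi)\widehat f(\xi))\,d\xi+\int\phi(-x)f(x)\,dx$ and estimating the two integrals by H\"older (with exponents $q,q'$ and $p,p'$ respectively) gives
\begin{equation*}
|f(y)|\le \norm{(e^{2\pi i\langle\xi,y\rangle}-\widehat\phi(\xi))\,e^{-\pi B|\xi|^2}}_{q'}\,\norm{\widehat f\,e^{\pi B|\xi|^2}}_q+\norm{\phi(y-\cdot)\,e^{-\pi A|x|^2}}_{p'}\,\norm{f\,e^{\pi A|x|^2}}_p,
\end{equation*}
so everything reduces to choosing the free parameters $\delta$ (hence $\alpha=y/\delta$), $m$, and $m_0$ in Lemma \ref{prop:phi} so that both $\norm{(1-\widehat\phi(\xi))e^{-\pi B|\xi|^2}}_{q'}$ and $\norm{\phi(y-\cdot)e^{-\pi A|x|^2}}_{p'}$ are at most $C\,e^{-\pi A|y|^2}(1+|y|)^{d/p}$.

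\textbf{Tuning the parameters.} The natural choice is to take $m_0\approx c|y|^2$ and $\delta\approx c'|y|/m_0\approx c''/|y|$ (equivalently $\alpha=y/\delta\approx c|y|^2\approx m_0$, so $\delta/y\approx 1/m_0$), and to pick $m$ of the same order, $m\approx c|y|^2$, chosen so that $e^{-\pi A|y|^2}$ is comparable to $(\delta/y)^m$-type quantities. With these choices item (4) of Lemma \ref{prop:phi} gives, for $\phi$ supported in $[-\delta,\delta]^d$, the decay $|1-\widehat\phi(\xi)|\le C^{m_0}(\delta m_0^{-1}|\xi|)^{m_0}$, which together with item (3) ($|1-\widehat\phi(\xi)|\le 1+C^{m_0}$) lets one split the $q'$-norm over $\{|\xi|\le R\}$ and $\{|\xi|>R\}$ for a suitable radius $R\approx m_0/(\delta\cdot\text{const})\approx|y|$: on the inner region the $\xi^{m_0}$ factor is geometrically small, producing a factor like $(C|y|^2)^{-m_0}\cdot$(polynomial)$\,\cdot e^{0}$ which after Stirling is comparable to $e^{-\pi B|y|^2}$ up to $C^{m_0}=e^{O(|y|^2)}$ losses that are absorbed because $B>A$; on the outer region the Gaussian $e^{-\pi B|\xi|^2}$ is itself smaller than $e^{-\pi B R^2}\approx e^{-cB|y|^2}$, again beating $e^{-\pi A|y|^2}$ once the constants are chosen with the slack $B-A>0$. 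For the second term, $\norm{\phi(y-\cdot)e^{-\pi A|x|^2}}_{p'}$: on the support $|x-y|\le\sqrt d\,\delta$, so $|x|\ge|y|-\sqrt d\,\delta\ge|y|/2$, hence $e^{-\pi A|x|^2}\le e^{-\pi A|y|^2/4}$... and here one must be a little more careful, replacing $e^{-\pi A|x|^2}$ by a pointwise bound that recovers the full $e^{-\pi A|y|^2}$ with only a polynomially-in-$|y|$ loss; this is exactly where the power weight $w=w_{m,\alpha}$ in $\phi$ pays off, because on the strip $|x-y|\le\sqrt d\,\delta$ one has $e^{-\pi A|x|^2}\le e^{-\pi A|y|^2}e^{2\pi A|y|\sqrt d\,\delta}e^{\pi A d\delta^2}$ and the middle factor is $e^{O(|y|\cdot 1/|y|)}=O(1)$ with our choice $\delta\approx 1/|y|$. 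Then $\norm{\phi(y-\cdot)e^{-\pi A|x|^2}}_{p'}\le C e^{-\pi A|y|^2}\norm{\phi}_{p'}$-type quantity, and by item (1) (with the trivial lower bound $w(x/\delta)\ge 1$ on the support, or more precisely the exact identity in item (1) read with the weight) one gets $\norm{\phi}_{p',\,w^{-p'/p}}=\delta^{-d/p}\norm{K_{m_0}}_{p',w}\le\delta^{-d/p}C^{m_0}$, and $\delta^{-d/p}\approx|y|^{d/p}$, which produces the advertised $(1+|y|)^{d/p}$ factor; the stray $C^{m_0}=e^{O(|y|^2)}$ is again swallowed by the slack between $A$ and $B$ after optimizing the constant in $m_0\approx c|y|^2$.

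\textbf{Main obstacle.} The only genuinely delicate point is the bookkeeping of the competing exponential rates: $\phi$ carries losses of the form $C^{m_0}$ and $C^m$ with $m,m_0$ of order $|y|^2$, i.e. losses $e^{K|y|^2}$ for some fixed $K=K(d,p)$, and these must be defeated simultaneously by (i) the gain $(\delta m_0^{-1}|\xi|)^{m_0}\sim(\text{small})^{m_0}$ on the low-frequency part, (ii) the extra Gaussian decay $e^{-\pi(B-A)|\xi|^2}$ available because $B>A$, and (iii) the a priori Gaussian decay already present. One has to choose the proportionality constant $c$ in $m_0=\lceil c|y|^2\rceil$ (and similarly for $m$, $\delta$, $R$) small enough, depending on $A,B,p,q,d$, that after Stirling every one of these competitions comes out with a net factor $\le e^{-\pi A|y|^2}$ (the target) times at most a power of $|y|$. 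Once this calibration is fixed, the two H\"older factors are each bounded as required and the lemma follows; I would organize the write-up as: (a) reduction to $|y|$ large; (b) the H\"older splitting above; (c) estimate of the frequency-side term using items (3)--(4) and a dyadic/threshold split at $|\xi|\approx|y|$; (d) estimate of the space-side term using item (1) and the pointwise Gaussian comparison on the thin strip $|x-y|\lesssim 1/|y|$; (e) collecting the constants and verifying the exponents $e^{-\pi A|y|^2}(1+|y|)^{d/p}$.
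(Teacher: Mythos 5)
Your overall architecture (the Fourier-inversion split $f(y)=\int(1-\widehat{\phi})\widehat{f}\,e^{2\pi i\langle\xi,y\rangle}\,d\xi+\int\phi(y-x)f(x)\,dx$, H\"older on each piece, with $\phi$ built from the weighted reproducing kernels of Lemma \ref{prop:phi}) is the same as the paper's, but your parameter calibration breaks the space-side estimate, and this is a genuine gap rather than bookkeeping. With your thin window $\delta\approx 1/|y|$ the weight $w_{m,\alpha}(x/\delta)\approx(y/\delta)^{pm}$ is essentially constant on $\mathrm{supp}\,\phi$, so the best unweighted bound Lemma \ref{prop:phi}(1) can yield is $\norm{\phi}_{p'}\lesssim m_0^{d/p}\delta^{-d/p}$ (and H\"older against $\int\phi=1$ together with the sharp Nikolskii constant shows this order is also forced). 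Meanwhile the frequency side forces $m_0\gtrsim |y|^2/\log|y|$: for $|\xi|$ up to about $\sqrt{A/B}\,|y|$ the factor $e^{-\pi B|\xi|^2}$ is still larger than the target $e^{-\pi A|y|^2}$, so there $|1-\widehat{\phi}(\xi)|$ itself must be $\lesssim e^{-\pi A|y|^2}$, which via item (4) requires $m_0$ of that size. Hence $\norm{\phi(y-\cdot)e^{-\pi A|x|^2}}_{p'}\approx e^{-\pi A|y|^2}(m_0/\delta)^{d/p}\gtrsim e^{-\pi A|y|^2}|y|^{3d/p}$ up to logarithms: your scheme proves the exponent $3d/p$, not $d/p$. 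Your proposed rescue, absorbing the stray $C^{m_0}=e^{O(|y|^2)}$ ``by the slack between $A$ and $B$,'' is unavailable on this side: that slack only multiplies the $\norm{\widehat{f}e^{\pi B|\xi|^2}}_q$ term, whereas the space factor multiplies $\norm{fe^{\pi A|x|^2}}_p$ and the target carries exactly the same Gaussian rate $A$, so only polynomial losses are tolerable there. (A smaller slip: with your parameters $m_0/\delta\approx|y|^3$, not $|y|$, so the stated split radius ``$R\approx m_0/\delta\approx|y|$'' is internally inconsistent, although splitting at $|\xi|\approx|y|$ does work on the frequency side.)

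The paper escapes this tension with the opposite calibration: a wide window $\delta=k(y)y$, so that $m_0/\delta\approx N|y|$ gives exactly the $(1+|y|)^{d/p}$ cost, and — to tame the enormous variation of $e^{-\pi A|x+y|^2}$ across such a window — the edge-tilted one-dimensional weight $w_{m,1}$ with $m=2\pi Ak(y)y^2$, chosen precisely so that $e^{-\pi A(x+y)^2}\,w_{m,1}(-x/\delta)^{1/p}\le e^{-\pi Ay^2}$ on $[-\delta,\delta]$ via $1+u\le e^u$ (this is why Lemma \ref{prop:multiphi}, with the product structure $\phi_1\otimes\phi_0\otimes\cdots\otimes\phi_0$, is used here instead of Lemma \ref{prop:phi}); the frequency side is then closed by taking $N$ large and $k$ small with $Nk$ small and $Nk\log N$ large, using the $B-A$ slack only against the $\widehat{f}$ term. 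This wide-window/tilted-weight mechanism is the key idea your plan is missing; without it the sharp factor $(1+|y|)^{d/p}$ is out of reach for your construction.
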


\begin{proof}

Let $k$ be a  small positive number and  $N$  a  large integer; both will be defined later.

Without loss of generality, we can assume that $y=(y,0, \dots,0)$ with $y>0$. Let
$k(y)$ be such that
$$k/2<k(y)<k$$ and $$m:=2 \pi A k(y)  y^2 \in \mathbb{N}$$ (for a given $A$, $k(y)$ exists for $y$ large enough). We also
set $$\delta :=k(y)y.$$
Let $\phi_d$ be as in Lemma \ref{prop:multiphi} with these
$\delta$ and $m$ and $$m_0:=Nm.$$

Using the Fourier inversion formula, we have
$$
f(x)=\int_{\mathbb{R}^d} (1- \widehat{\phi_d}(\xi)) \widehat{f}(\xi) e^{2\pi i \langle y, \xi \rangle} d \xi + \int_{\mathbb{R}} \widehat{\phi_d}(\xi) \widehat{f}(\xi) e^{2\pi i\langle y, \xi \rangle} d \xi=:I+II.
$$
In order to estimate $I$, in light of statement  (4) of Lemma \ref{prop:multiphi}, we deduce that, for any $\xi \in \mathbb{R}^d$,
\begin{eqnarray*}
|1- \widehat{\phi_d}(\xi)|   &\leq&    ( C\delta |\xi|)^{Nm}  (Nm)^{-Nm}
\\& \leq &(C \delta)^{Nm} e^{\pi B |\xi|^2} (Nm)^{-\frac{Nm}{2}},
\end{eqnarray*}
 where we used $\displaystyle\frac{(2 \pi B)^{n}|\xi|^{2n}}{n!} \leq e^{2\pi B |\xi|^2}. $

Note that we can find a large enough $N$ independent of $k<1$ such that
\begin{eqnarray*}
&&Nm\left( \log C + \log y + \log k(y) - \frac{1}{2} \log{Nm}\right)
\\
&=&Nm\left( \log C  + \frac{1}{2} \log k(y) -  \frac{1}{2} \log{2\pi A - \frac{1}{2}\log N}\right)\\
&\leq&  -\frac{ \pi A y^2}{3} Nk \log N .
\end{eqnarray*}
Thus,
$$|1- \widehat{\phi_d}(\xi)|\leq \exp\Big({\pi B |\xi|^2 -\frac{  \pi A y^2}{3} Nk \log N}\Big) .
$$
Moreover, in view of  Lemma \ref{prop:multiphi} (3), we obtain
$$|1- \widehat{\phi_d}(\xi)|   \leq C^{Nm}.$$

Next, we apply  Hölder's inequality to get
$$I \leq \norm{\widehat{f}(\xi) e^{\pi B  |\xi| ^2}}_q \left(\int_{\mathbb{R}^d} |1- \widehat{\phi}(\xi)|^{q'} e^{-q'\pi B |\xi|^2}  d \xi\right)^{\frac{1}{q'}}.
$$
Taking into account  our estimates of
$|1- \widehat{\phi_d}(\xi)|$,
we deduce that

\begin{eqnarray*}
&&\left(\int_{\mathbb{R}^d} |1- \widehat{\phi_d}(\xi)|^{q'}e^{-q'\pi B |\xi|^2}  d \xi\right)^{\frac{1}{q'}} \leq \left(\int_{|\xi|\leq y}
\right)^{\frac{1}{q'}} +  \left(\int_{|\xi|\geq y} \right)^{\frac{1}{q'}}
\\
&\leq &  e^{-\frac{ \pi A y^2}{3} Nk \log N} \left(\int_{|\xi|\leq y}  d \xi\right)^{\frac{1}{q'}} +  C^{Nm} \left(\int_{|\xi|\geq y}  e^{-q'\pi B |\xi|^2}  d \xi\right)^{\frac{1}{q'}}
\\
&\leq &
C  y^{\frac{d}{q'}} e^{-\frac{ \pi A y^2}{3} Nk \log N} +C^{Nm} y^{\frac{d-2}{q'}} e^{-\pi B |y|^2},
\end{eqnarray*}
where we have used the simple estimate
$\displaystyle\int_{|x|\geq z} e^{-|x|^2} dx \lesssim_d e^{-z^2} z^{d-2}$ for $z>1$.

Finally, since $$C^{Nm}  e^{-\pi B |y|^2}= \exp\left({-\pi |y|^2( B - 2 Nk(y) A\log C )}\right),$$ for any $B>A$ we can find $N$ and $k$ such that $Nk$ is small enough and $Nk \log N$ is large enough  so that
$$I\leq e^{-\pi A y^2} \norm{\widehat{f}(\xi) e^{B \pi \xi ^2}}_q.$$

For $II$, using Plancherel's formula and Hölder's inequality,
\begin{eqnarray*}
II&=& \int_{[-\delta,\delta]^d} f(x+y) \phi_d(-x) dx \\&\leq& \norm{f(x) e^{\pi A |x|^2}}_p \left(\int_{[-\delta,\delta]^d} e^{-p'\pi A |x+y|^2} |\phi_d(-x)|^{p'}dx\right)^{\frac{1}{p'}}.
\end{eqnarray*}
Here, noting that  $y=(y,0,\dots, 0)$, in light of statement (2) of Lemma \ref{prop:multiphi}, we obtain
$$\left(\int_{[-\delta,\delta]^d} e^{-p'\pi A |x+y|^2} |\phi_d(-x)|^{p'}dx\right)^{\frac{1}{p'}} \leq C m^{\frac{d-1}{p}} \delta^{-\frac{d-1}{p}} \left(\int_{-\delta}^\delta e^{-{p'}\pi A |x+y|^2} |\phi_1(-x)|^{p'
}dx\right)^{\frac{1}{p'}}.
$$
To estimate the last integral,
we take into account Lemma \ref{prop:multiphi}  (1) to get
\begin{eqnarray*}
&&   \left(\int_{-\delta}^\delta e^{-{p'}\pi A |x+y|^2} |\phi_1(-x)|^{p'
}dx\right)^{\frac{1}{p'}} \\
&\leq & \left(\int_{\delta}^\delta  w_{m,1}(-x /\delta)^{-p'/p}|\phi_1(-x)|^{p'}dx\right)^{\frac{1}{p'}} \sup _{x\in [-\delta,\delta]} e^{-\pi A (x+y)^2} w_{m,1}(-x/ \delta)^{\frac{1}{p}}\\
&\leq & C m^{\frac{1}{p}}\delta^{-\frac{1}{p}} e^{-\pi A y^{2}} \sup_{x\in [-1,1]} e^{-2 \pi k(y) A  xy^2} (1+x)^{2 \pi A k(y) y^2 }\\
&\leq & C y^{\frac{1}{p}}e^{-\pi A y^2},
\end{eqnarray*}
   where we have used that $1+x\leq e^x$. Combining the estimate for $I$ and
   $$II \leq
    C y^{\frac{d}{p}}e^{-\pi A y^2}
   \norm{f(x) e^{\pi A  |x|^2}}_p,
   $$
   we arrive at the required result.
 \end{proof}

 The proof of the upper bound in Theorem \ref{theorem:mainth} follows now by a change of variables:
 \begin{proof}[Proof of the "$\lesssim$" part of Theorem \ref{theorem:mainth}]
     Let $A<\alpha<B$ be such that $AB = \alpha^2$. Applying Lemma \ref{lemma:mainl} to $f(\lambda x)$,  we deduce that
     $$C e^{\pi A \lambda ^{-2} |y|^2} (1+|y|)^{-\frac{d}{p}}|f(y)|\lesssim  \norm{f(x) e^{ \lambda ^{-2} A \pi |x|^2}}_p  +\norm{\widehat{f}(\xi) e^{ \lambda ^2 B \pi |\xi| ^2}}_q$$ and the result follows by setting $\lambda^2 = \sqrt{\frac{A}{B}}$.
 \end{proof}
\subsection{Proof of Theorem \ref{theorem:main2}}
Without loss of generality, assume that $y=(y,0, \dots, 0)$ and $y>0$. We proceed as in
Theorem 1.3$'$ 
but this time with $\phi$ from Lemma \ref{prop:phi}.

Consider  $\phi$ given by
    \eqref{eq:defphi}
 with $m_0 = \lceil m+n\rceil $ and $\delta>0$ to be defined later.

First, we claim that

\begin{equation}
        \label{vsp3}
\left(\int_{\mathbb{R}^d} |1- \widehat{\phi}(\xi)|^{q'} |\xi|^{-q'n}  d \xi\right)^{\frac{1}{q'}} \leq C^{m_0} (\delta m_0 ^{-1})^{n-\frac{d}{q'}}=:I.
        \end{equation}
Indeed, set $$\Delta:= \frac\delta {m_0 }
 .$$ Then, using statements (3) and (4) in Lemma \ref{prop:phi}, we deduce that
\begin{eqnarray*}
&&
\left(\int_{\mathbb{R}^d} |1- \widehat{\phi}(\xi)|^{q'} |\xi|^{-q' n}  d \xi\right)^{\frac{1}{q'}}
        \\
 &\leq &C^{m_0} \Delta^{m_0}\left(\int_{|\xi|\leq \Delta^{-1}} |\xi|^{q'(m_0-n)}  d \xi\right)^{\frac{1}{q'}}  + C^{m_0} \left(\int_{|\xi|\geq \Delta^{-1}}  |\xi|^{-q'n}  d \xi\right)^{\frac{1}{q'}}  \\
 &\leq & C^{m_0} \Delta^{n-\frac{d}{q'}},
\end{eqnarray*}
completing the proof of \eqref{vsp3}.

 Second, recalling that $\alpha=y/\delta,$ we note that $ |x-y|^{-m} =  w(x/\delta)^{-1/p} \delta^{-m}$, cf. \eqref{w}. Thus, we deduce from Lemma \ref{prop:phi} (1) that
 $$\left(\int_{\mathbb{R}^d}  |x-y|^{-p'm} |\phi(x)|^{p'} dx\right)^{\frac{1}{p'}}\leq \delta^{-d/p-m} \min\left(C^m ,C m^{\frac{d}{p}}  (\delta/y)^m\right)=:II.$$

Finally, it remains to choose  $\delta$. We proceed as follows.
In order to prove inequality \eqref{eq:m}, set $$\delta:=\sqrt{m_0}.$$
Then, using that $m_0 \approx_{p,q,\varepsilon ,d} n \approx_{p,q,\varepsilon ,d} m$, we obtain
$$I+II\leq C^m m^{-m/2}.$$
Thus, we deduce that
\begin{equation}
\label{ineq:tempo}
    |f(y)|\leq C^m m^{-m/2}
\left(\norm{f(x)|x|^m}_p+  \norm{\widehat{f}(\xi)|\xi|^n}_{q}\right),
\end{equation} whence the result follows by observing that $m!\leq m^m$.
This proves  \eqref{eq:m}.

 To derive inequality
 \eqref{eq:y2}, set $$\delta:=\lambda \frac{m}{y}$$
 with $\lambda>0$ to be defined later. Then, in light of $n-\frac{d}{q'}=m - \frac{d}{p}$, we derive
\begin{equation}
    \label{lambda1}
  I\leq y^{-m + \frac{d}{p}} C^m \lambda^{m-\frac{d}{p}}.\end{equation}
Hence, making $\lambda$ small enough, we obtain
\begin{equation}
    \label{lambda2}
  I\leq y^{-m+ \frac{d}{p}}.
  \end{equation}
  Moreover, for our choice of $\delta$ we have $$II\leq C m^{\frac{d}{p}} \delta^{-\frac{d}{p}}  y^{-m} \leq C y^{-m+\frac{d}{p}}.$$

In conclusion,

$$
|f(y)| \leq C y^{-m+\frac{d}{p}}
 \left(\norm{f(x)|x|^m}_p+  \norm{\widehat{f}(\xi)|\xi|^n}_q\right).
$$
It remains to note that, for $|y|<1$, by \eqref{ineq:tempo},
$$|f(y)| \leq  C^m m^{-m/2} \left(\norm{f(x)|x|^m}_p+  \norm{\widehat{f}(\xi)|\xi|^n}_q\right) \leq  C\left(\norm{f(x)|x|^m}_p+  \norm{\widehat{f}(\xi)|\xi|^n}_q\right).$$
This completes the proof of inequality \eqref{eq:y2}.

\hfill
\qedsymbol{}


\vspace{3mm}

 \section{Proof of the lower bound 
  in Theorem \ref{theorem:mainth}}
\label{Section 4}

\subsection{
The case
 $\alpha=\frac{1}{\sqrt{2}}$
}
Before presenting the proof of the lower bound in \eqref{eq:mainequation} by means of a specific construction, let us show how, for a special choice of $\alpha$, such a construction arises in a natural way.

Our starting point is the following observation:
\begin{proposition}\label{proposition}
Let $\alpha=\frac{1}{\sqrt{2}}$. Then, for $y\in\mathbb{R},$
\begin{equation}
\label{eq:derivatives--}
\sup
\frac{|g(y)|}{N(g)}e^{- \pi \alpha y^2} \leq \sup_{f\in L_1+L_2}\frac{|f(y)|}{\norm{f(x) e^{\pi \alpha |x|^2}}_2 + \norm{\widehat{f}(\xi) e^{\pi \alpha |\xi|^2}}_2},
\end{equation}
where the supremum
on the left-hand side
 is taken over all
bandlimited functions $g\in L_2(\mathbb{R})$ and


\begin{equation}
\label{eq:derivatives}
N(g)^2:= \norm{g}_2^2 + 2^{\frac{1}{4}}\sum_{n=0}^\infty \frac{ 2^{\frac{n}{2}} |g^{(n)}(0)|^2}{(2\pi)^n n!}.
\end{equation}
\end{proposition}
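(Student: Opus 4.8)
The plan is to prove the inequality by testing the right-hand supremum against one explicit competitor attached to each bandlimited $g$. Given a bandlimited $g\in L_2(\mathbb{R})$, set $f(x):=g(x)e^{-\pi\alpha x^2}$ with $\alpha=1/\sqrt2$; since a bandlimited function is bounded, $f\in L_1\cap L_2\subset L_1+L_2$, so $f$ is admissible. Two of the three quantities on the right are then immediate: first $f(y)=g(y)e^{-\pi\alpha y^2}$, which is exactly how the factor $e^{-\pi\alpha y^2}$ and the value $g(y)$ enter the left-hand side; second $f(x)e^{\pi\alpha|x|^2}=g(x)$, so that $\norm{f(x)e^{\pi\alpha|x|^2}}_2=\norm{g}_2$, the first summand of $N(g)^2$.

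The computation of $\norm{\widehat f(\xi)e^{\pi\alpha|\xi|^2}}_2$ is the heart of the matter, and it is here that the special value $\alpha=1/\sqrt2$ (equivalently $2\alpha=1/\alpha$) is used. From $\widehat{e^{-\pi\alpha x^2}}(\xi)=\alpha^{-1/2}e^{-\pi\xi^2/\alpha}=2^{1/4}e^{-\pi\sqrt2\,\xi^2}$ one gets $\widehat f=\widehat g*\bigl(2^{1/4}e^{-\pi\sqrt2\,(\cdot)^2}\bigr)$; since $\widehat g$ is compactly supported, $\widehat f$ decays faster than any Gaussian and every integral below converges absolutely. Writing $|\widehat f(\xi)|^2$ as a double integral against $\widehat g(\eta)\overline{\widehat g(\zeta)}$, multiplying by $e^{2\pi\alpha\xi^2}=e^{\pi\sqrt2\,\xi^2}$, and integrating in $\xi$ first—the exponent $-\pi\sqrt2\bigl[(\xi-\eta)^2+(\xi-\zeta)^2-\xi^2\bigr]$ completes the square to $-\pi\sqrt2\,(\xi-\eta-\zeta)^2+2\pi\sqrt2\,\eta\zeta$, contributing the factor $2^{-1/4}e^{2\pi\sqrt2\,\eta\zeta}$—yields
\[
\norm{\widehat f(\xi)e^{\pi\alpha|\xi|^2}}_2^2=2^{1/4}\int_{\mathbb{R}}\int_{\mathbb{R}}\widehat g(\eta)\,\overline{\widehat g(\zeta)}\,e^{2\pi\sqrt2\,\eta\zeta}\,d\eta\,d\zeta .
\]
Expanding $e^{2\pi\sqrt2\,\eta\zeta}=\sum_{n\ge0}\frac{(2\pi\sqrt2)^n}{n!}\eta^n\zeta^n$ and using $\int\widehat g(\eta)\eta^n\,d\eta=(2\pi i)^{-n}g^{(n)}(0)$ (differentiation under the integral sign is legitimate because $\widehat g$ has compact support) turns the double integral into $\sum_{n\ge0}\frac{2^{n/2}|g^{(n)}(0)|^2}{(2\pi)^n n!}$, whence
\[
\norm{\widehat f(\xi)e^{\pi\alpha|\xi|^2}}_2^2=2^{1/4}\sum_{n=0}^\infty\frac{2^{n/2}\,|g^{(n)}(0)|^2}{(2\pi)^n\,n!},
\]
which is precisely the second summand of $N(g)^2$.

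Adding the two norm identities gives $\norm{f(x)e^{\pi\alpha|x|^2}}_2^2+\norm{\widehat f(\xi)e^{\pi\alpha|\xi|^2}}_2^2=N(g)^2$; combined with $|f(y)|=|g(y)|e^{-\pi\alpha y^2}$ this yields
\[
\frac{|f(y)|}{\norm{f(x)e^{\pi\alpha|x|^2}}_2+\norm{\widehat f(\xi)e^{\pi\alpha|\xi|^2}}_2}\;\gtrsim\;\frac{|g(y)|\,e^{-\pi\alpha y^2}}{N(g)},
\]
and taking the supremum over all bandlimited $g$ gives the Proposition. The step demanding genuine care is the displayed computation of $\norm{\widehat f\, e^{\pi\alpha|\cdot|^2}}_2$: one must keep exact track of the constant $2^{1/4}$ and of the weights $2^{n/2}/((2\pi)^n n!)$ so that they match the defining series of $N(g)$, and this is the only place where the exponent $\alpha=1/\sqrt2$ is really used. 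Passing from the sum $\norm{f\,e^{\pi\alpha|\cdot|^2}}_2+\norm{\widehat f\,e^{\pi\alpha|\cdot|^2}}_2$ in the denominator to $\bigl(\norm{f\,e^{\pi\alpha|\cdot|^2}}_2^2+\norm{\widehat f\,e^{\pi\alpha|\cdot|^2}}_2^2\bigr)^{1/2}=N(g)$ costs only the absolute factor $\sqrt2$, which is immaterial here since the proposition is used solely to feed the "$\gtrsim$"-direction of Theorem \ref{theorem:mainth}; if the sharp constant is wanted one first rebalances the two weighted norms in the competitor (exploiting the freedom in the class $L_1+L_2$).
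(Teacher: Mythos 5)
Your proposal is correct and follows essentially the same route as the paper: the same competitor $f(x)=g(x)e^{-\pi\alpha x^2}$, the identity $\norm{f e^{\pi\alpha|x|^2}}_2=\norm{g}_2$, and the same Gaussian computation (you complete the square in $\xi$ where the paper changes variables $(u,v,w)$ and applies Fubini) identifying $\norm{\widehat f e^{\pi\alpha|\xi|^2}}_2^2$ with $2^{1/4}\sum_n 2^{n/2}|g^{(n)}(0)|^2/((2\pi)^n n!)$. The factor-$\sqrt 2$ discrepancy you flag between the sum of the two norms and $N(g)=\bigl(\norm{g}_2^2+2^{1/4}\sum\cdots\bigr)^{1/2}$ is equally present, and silently absorbed, in the paper's own proof (which likewise only computes the two norms of this single competitor), so your write-up is if anything more explicit on that point, though your parenthetical suggestion that the sharp constant could be recovered by ``rebalancing'' the two norms is not substantiated, since both norms are attached to the same fixed $f$.
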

\begin{proof}

Let $g\in L_2(\mathbb{R})$ be
such that $\widehat{g}$ is compactly supported
 and set $f(x):=e^{- \pi \alpha x^2} g(x)$. Then we have
$$\widehat{f}(\xi)=\alpha^{-\frac{1}{2}} \int_{\mathbb{R}} \widehat{g}(\xi- \eta) e^{-\pi \alpha^{-1} \eta^2} d \eta$$ and
$$\norm{f e^{\pi \alpha x^2}}_2 = \norm{g}_2.$$
Moreover, it is clear that
$$\norm{\widehat{f} e^{\pi \alpha \xi^2}}_2^2
 = \alpha^{-1}\int_{\mathbb{R}} \left(\int_{\mathbb{R}} \widehat{g}(\xi-  \eta) e^{-\pi \alpha^{-1}  \eta^2} d \eta\right) \left(\int_{\mathbb{R}} \overline{\widehat{g}(\xi-  \nu)} e^{-\pi \alpha^{-1}  \nu^2} d \nu\right)  e^{2\pi \alpha \xi^2} d \xi.
 $$
  Further, we observe that by setting
 $$\left\{
  \begin{array}{ll}
   u:= \xi -  \eta,  \\
   v:=\xi- \nu,  \\
   w:=\xi,
  \end{array}
\right.
$$
 we arrive at
$$2 \pi \alpha \xi^2 - \pi \alpha^{-1}( \eta^2+  \nu^2)= -2\pi w^2(\alpha^{-1}- \alpha)+2 \pi \alpha^{-1}w (u+v) - \pi \alpha^{-1}(u^2+v^2).$$ Thus, by a change of variables and Fubini's theorem,
\begin{eqnarray*}
        \norm{\widehat{f} e^{\pi \alpha \xi^2}}_2^2
        &=&
        \alpha^{-1}\int_{\mathbb{R}^3}  \widehat{g}(\xi-  \eta) e^{-\pi \alpha^{-1}  \eta^2}   \overline{\widehat{g}(\xi-  \nu)} e^{-\pi \alpha^{-1}  \nu^2}  e^{2\pi \alpha \xi^2}d \eta d \nu d \xi \\
       &=& \alpha^{-1}\int_{\mathbb{R}^2}  \widehat{g}(u)    \overline{\widehat{g}(v)} e^{- \pi \alpha^{-1}(u^2+v^2)}\left(\int_{\mathbb{R}} e^{-2\pi w^2(\alpha^{-1}- \alpha)+2 \pi \alpha^{-1}w (u+v)}  d w \right) du dv.
\end{eqnarray*}
Taking into account that  $\alpha<1$, one has 
$$\int_{\mathbb{R}} e^{-2\pi w^2(\alpha^{-1}- \alpha)+2 \pi \alpha^{-1}w (u+v)}  d w = \left(2 (\alpha^{-1}- \alpha)\right)^{-\frac{1}{2}} e^{ \frac{\pi (u+v)^2}{2 \alpha^2 (\alpha^{-1}-\alpha)}}.$$
Therefore, since $\alpha= \frac{1}{\sqrt{2}}$, after an application of Fubini's theorem we conclude that
\begin{eqnarray*}
     \norm{\widehat{f} e^{\pi \alpha \xi^2}}_2^2
     &=&
     \left(2 \alpha^2 (\alpha^{-1}- \alpha)\right)^{-\frac{1}{2}} \int_{\mathbb{R}^2}  \widehat{g}(u)    \overline{\widehat{g}(v)} e^{- \pi \alpha^{-1}(u^2+v^2)} e^{ \frac{\pi (u+v)^2}{2 \alpha^2 (\alpha^{-1}-\alpha)}} du dv\\
     &=& 2^{\frac{1}{4}}\int_{\mathbb{R}^2}  \widehat{g}(u)    \overline{\widehat{g}(v)} e^{2  \pi \sqrt{2}uv} du dv
     =
     2^{\frac{1}{4}} \sum_{n=0}^\infty \frac{ 2^{\frac{n}{2}} |g^{(n)}(0)|^2}{(2\pi)^n n!}.
 \end{eqnarray*}
The proof
is now complete.

\end{proof}

\begin{remark}
   {\textnormal{
The supremum on the left-hand side of
\eqref{eq:derivatives--} can be calculated explicitly, see Section 5. }}
\end{remark}

In light of Proposition \ref{proposition}, a reasonable approach to estimate
the supremum on the right-hand side of
\eqref{eq:derivatives--}
from below is the following:
for each $y>0$, set  $f(x):=e^{-\pi \alpha x^2}g(x)$ with
$g$ of minimal $\norm{g}_2$ among those $g$ with $\widehat{g}$ supported on $[-M/2,M/2]$ such that $g(y)=1$ and $g^{(n)}(0)=0$ for $0\leq n \leq  N $, where $M$ and $N$ are parameters to be optimized.

By taking the Fourier transform, this extremal  problem can be equivalently stated as follows: find $\widehat{g}$ supported on $[-M/2,M/2]$ with minimal $\norm{\widehat{g}}_2$ such that
$$\int_{-\frac{M}{2}}^{\frac{M}{2}} \widehat{g}(\xi) e ^{2 \pi i\xi y} d\xi =1\quad\mbox{ and}\quad \int_{-\frac{M}{2}}^{\frac{M}{2}} \widehat{g}(\xi) \xi^n d\xi =0\quad\mbox{ for}\; 0\leq n\leq N.$$ It is well known that the solution to this problem is given by $\widehat{g}=h$ with
\begin{equation}
\label{eq:defh}
    h(\xi)= \frac{1}{\widetilde{E}_N (e^{-2 \pi i \xi y})^{2}}\mathbbm{1}_{[-\frac{M}{2}, \frac{M}{2}]}(\xi) \left(e^{-2 \pi i \xi y}-P^*(\xi) \right),
\end{equation}
where
$P^*$ is the orthogonal projection of $e^{-2 \pi i \xi y}$ onto the space of algebraic polynomials of degree $N$ and $\widetilde{E}_N(e^{-2 \pi i \xi y})$
is the best  $L_2$-approximation of $e^{-2 \pi i \xi y}$ by polynomials, that is,
\begin{eqnarray*}
    \widetilde{E}_N(e^{-2 \pi i \xi y})&:=&\inf_{P \text{ of degree}\leq N}\Big( \int_{-\frac{M}{2}}^{\frac{M}{2}} \left|e^{-2 \pi i \xi y}-P(\xi) \right|^2 d \xi
\Big)^\frac12\\&=&\Big(\int_{-\frac{M}{2}}^{\frac{M}{2}} \left|e^{-2 \pi i \xi y}-P^*(\xi) \right|^2 d \xi\Big)^\frac12.
\end{eqnarray*}
 It is thus clear that the minimal value is $\displaystyle\norm{\widehat{g}}_2 = {1}/{\widetilde{E}_N(e^{-2 \pi i \xi y})}.$

\subsection{Preliminaries on approximation theory}
We now proceed to elaborate on the ideas presented in the previous subsection. From now on, we replace $\exp({-2 \pi i \xi y})$ with $\cos({2 \pi  \xi y})$, as it will simplify the following arguments. (Equivalently, we  replace  $f(y)$ in equation \eqref{eq:mainequation} with $\frac{f(y)+f(-y)}{2}$.)

Our first aim is to estimate $\widetilde{E}_N\left(\cos({2 \pi  \xi y})\right)$. Note that by a change of variables it suffices to consider $$E_N\left(\cos({2 \pi  \xi  D})\right)=\inf_{P \text{ of degree}\leq N}\Big( \int_{-\frac{1}{2 \pi}}^{\frac{1}{2 \pi }} \left|\cos({2 \pi  \xi  D})-P(\xi) \right|^2 d \xi\Big)^\frac12, \quad D \in \mathbb{R}.$$

We will now  show that
the cosine function
 with frequency $D$ cannot be meaningfully approximated by polynomials of degree much smaller than its frequency.
 To prove this,
we use the close  relationship between
the best error of approximation of a function and  its smoothness (see, e.g., Chapter 7 in \cite{devore1993constructive}).

\begin{lemma}
\label{lemma:Ecd}
 There exists $c>0$ small enough such that,  for any $D\geq 1$,

    $$1\approx E_{\lfloor cD\rfloor}\left(\cos(2 \pi \xi D)\right).$$

\end{lemma}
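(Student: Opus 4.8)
The plan is to prove the two-sided estimate $1 \approx E_{\lfloor cD\rfloor}(\cos(2\pi\xi D))$ by treating the upper and lower bounds separately. The upper bound $E_{\lfloor cD\rfloor}(\cos(2\pi\xi D)) \lesssim 1$ is trivial: taking $P\equiv 0$ gives $E_N(\cos(2\pi\xi D)) \le \big(\int_{-1/(2\pi)}^{1/(2\pi)} |\cos(2\pi\xi D)|^2\,d\xi\big)^{1/2} \le (1/\pi)^{1/2}$, a bound uniform in $D$ and $N$. So the entire content is in the lower bound: for a suitably small absolute constant $c>0$, polynomials of degree $\lfloor cD\rfloor$ cannot approximate $\cos(2\pi\xi D)$ in $L_2$ on a fixed interval better than a fixed positive constant.

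For the lower bound I would exploit the link between approximation error and smoothness, via a Bernstein/Markov-type inverse inequality. The key observation is that $g_D(\xi):=\cos(2\pi\xi D)$ has large high-order derivatives: $\|g_D^{(k)}\|_{L_2(I)} \approx (2\pi D)^k$ on the fixed interval $I=[-1/(2\pi),1/(2\pi)]$ (up to constants depending on $k$ but, more importantly, for $k$ in a suitable range the growth is genuinely of order $D^k$). Suppose toward a contradiction that $E_N(g_D) =: \eta$ is small, with $N=\lfloor cD\rfloor$. Take a near-best polynomial $P$ of degree $\le N$ with $\|g_D - P\|_{L_2(I)} \le 2\eta$. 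Applying the Markov/Bernstein inequality for the $k$-th derivative of an algebraic polynomial of degree $N$ on an interval — namely $\|P^{(k)}\|_{L_2(I)} \lesssim_{|I|} N^{2k}\|P\|_{L_2(I)}$ (or the sharper $N^k(N-1)^k\cdots$ form) — together with $\|P\|_{L_2(I)} \le \|g_D\|_{L_2(I)} + 2\eta \lesssim 1$, yields $\|P^{(k)}\|_{L_2(I)} \lesssim C(|I|)^k N^{2k}$. On the other hand, I also need to transfer the derivative bound from $P$ to $g_D$: writing $g_D^{(k)} = P^{(k)} + (g_D-P)^{(k)}$ does not immediately work since $g_D - P$ need not be small in a $C^k$ sense. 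The cleaner route is to instead pick $k$ comparable to $N$ and use a direct comparison: by the reverse triangle inequality in a dual/pairing sense, $|\langle g_D, \psi\rangle| \le |\langle P,\psi\rangle| + 2\eta\|\psi\|_{L_2}$ for any test function $\psi$, and choose $\psi$ to annihilate polynomials of degree $\le N$ while pairing nontrivially with $\cos(2\pi\xi D)$.

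Concretely, I would take $\psi$ to be (a smooth truncation near the endpoints of) the $(N+1)$-st derivative of a fixed bump, or a Legendre-type polynomial of degree $N+1$ on $I$: any $\psi \perp \mathcal{P}_N$ in $L_2(I)$ gives $|\langle g_D,\psi\rangle| \le 2\eta \|\psi\|_{L_2(I)}$, hence $\eta \ge \tfrac12 |\langle g_D,\psi\rangle| / \|\psi\|_{L_2(I)}$. So it suffices to exhibit $\psi \perp \mathcal{P}_N$ with $|\langle \cos(2\pi\cdot D),\psi\rangle| \gtrsim \|\psi\|_{L_2(I)}$. The natural candidate is the normalized Legendre polynomial $L_{N+1}$ rescaled to $I$: then $\langle \cos(2\pi\xi D), L_{N+1}\rangle$ is, up to constants, the $(N+1)$-st Legendre coefficient of $\cos(2\pi\xi D)$, and by the classical asymptotics for Legendre coefficients of an entire function of exponential type $2\pi D$ (equivalently, by the generating-function / Bessel-function identity $\int_{-1}^1 \cos(t\xi) L_n(\xi)\,d\xi \approx$ a spherical Bessel function $j_n(t)$), this coefficient stays of order $1$ (relative to $\|L_{N+1}\|_{L_2}$) precisely as long as $N+1 \lesssim D$, i.e.\ $N \le cD$ with $c$ small. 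That is exactly the regime we are in, so the coefficient is bounded below, giving $\eta \gtrsim 1$.

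The main obstacle I anticipate is making the lower bound on the Legendre (or Bessel) coefficient $\langle \cos(2\pi\xi D), L_{N+1}\rangle$ clean and uniform: one needs a non-asymptotic statement that for $n \le cD$ this coefficient is $\gtrsim \|L_n\|_{L_2}$ with an absolute implied constant and an absolute $c$. This is where the ``entire function of exponential type cannot have all low-order Taylor/Legendre data small'' phenomenon enters, and it is cleanest to phrase it through the spherical Bessel function: $j_n(x)$ is not small when $x \gtrsim n$. Alternatively — and this is probably the route the authors take, given the hint ``relationship between the best error of approximation and smoothness'' — one can avoid Bessel functions entirely and argue via a $K$-functional / Jackson–Bernstein equivalence: a function with $E_N(g) \le \eta$ for $N \le cD$ would, by the Bernstein inequality, be well-approximated in a way forcing a modulus-of-smoothness bound incompatible with $\|g_D^{(k)}\|_{L_2(I)} \approx (2\pi D)^k$ for $k \approx N$, since $(2\pi D)^k / k!$ is not controlled by what degree-$N$ approximation allows when $k\approx N \ll D$. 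I would carry out whichever of these is shortest; the Legendre/Bessel computation has the advantage of being completely explicit, while the $K$-functional argument is softer but needs the equivalence constants tracked carefully. Either way, once the lower bound $E_{\lfloor cD\rfloor}(\cos(2\pi\xi D)) \gtrsim 1$ is in hand, combining with the trivial upper bound finishes the proof.
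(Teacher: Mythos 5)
Your upper bound is fine, but the concrete lower-bound argument you propose has a genuine quantitative gap. Testing against a single polynomial $\psi\perp\mathcal P_N$ does give $E_N(g_D)\ge |\langle g_D,\psi\rangle|/\|\psi\|_{L_2(I)}$, but your claim that the Legendre coefficient of $\cos(2\pi\xi D)$ at degree $N+1\approx cD$ is of order $1$ is false. By the paper's own identity (Lemma \ref{lemma:legbess}(2)), the coefficient against the normalized Legendre polynomial $\widetilde P_{2k}$ equals $\sqrt{(4k+1)/(2D)}\,J_{2k+1/2}(D)$; in the oscillatory regime $2k\le cD\ll D$ one has $J_{2k+1/2}(D)=O(D^{-1/2})$, so the coefficient is $O(\sqrt{c/D})\to 0$, and in addition it oscillates in $D$ and can vanish at zeros of the Bessel function. (This is consistent with Parseval: $\|g_D\|_{L_2(I)}^2\approx 1$ is spread over $\approx D$ coefficients, each of size $\approx D^{-1/2}$ on average, so no single one can be bounded below.) For the same reason the statement ``$j_n(x)$ is not small when $x\gtrsim n$'' is false pointwise. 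What is true, and what you would actually need, is that the \emph{tail sum} $E_N(g_D)^2=\sum_{2k>N}|\langle g_D,\widetilde P_{2k}\rangle|^2$ stays bounded below, which requires averaged (not pointwise) lower bounds for $J_{2k+1/2}(D)$ over $cD\lesssim 2k\lesssim D$; you have not supplied these, so the main route of your proposal does not close.

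Your alternative ``smoothness versus approximation'' route is essentially the paper's proof, but the efficient version is simpler than what you sketch: there is no need for derivatives of order $k\approx N$ or factorial bookkeeping (where the constants in the inverse theorem would depend on $k$ and must be tracked). The paper bounds the \emph{first-order} $L_2$ modulus of continuity from below directly, $\omega(g_D,t,I)\gtrsim 1$ for $t\ge 1/(2D)$ on $I=[-\tfrac1{4\pi},\tfrac1{4\pi}]$, by computing the shift difference at step $s=1/(2D)$, and then applies the inverse approximation theorem $\omega(g,n^{-1},I)\le \tfrac{K}{n}\sum_{k=0}^n E_k(g)$ with $n\approx D/3$. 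Splitting the sum at index $\lfloor 3cn\rfloor$, using $E_k(g_D)\le\|g_D\|_2\lesssim 1$ for the low indices and monotonicity $E_k\le E_{\lfloor 3cn\rfloor+1}$ for the rest, gives $1\lesssim cK+KE_{\lfloor cD\rfloor}(g_D)$, whence $E_{\lfloor cD\rfloor}(g_D)\gtrsim 1$ for $c$ small with absolute constants. If you want to salvage your write-up, either carry out this first-order modulus argument or replace the single-coefficient claim by an averaged Bessel estimate for the whole tail.
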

\begin{proof}

    Set $g_D(\xi):=\cos(2 \pi  \xi D).$ We may clearly assume that $D$ is large enough.
    Then, setting
    $s:=\frac{1}{2D}$ and $I:=[-\frac{1}{4\pi},\frac{1}{4 \pi}]$, we see that
    $$
    \left(\int_{I} |g_D(x+s)-g_D(x)|^2 \mathbbm{1}_{I}(x+s) dx \right)^{\frac{1}{2}}  \approx   |I \cap (I-s)|^{\frac{1}{2}} \approx 1.
    $$
    Defining the $L_2$-modulus of continuity by
    $$\omega(f,t,I):= \sup_{|s|\leq t} \left(\int_{I} |f(x+s)-f(x)|^2 \mathbbm{1}_{I}(x+s) dx \right)^{\frac{1}{2}},
    $$
    we note  that
    $$\omega(g_D,t ,I)\gtrsim 1
\quad\mbox{    for}\quad t \geq \frac{1}{2D}.$$

    Next, let $n$ be an integer satisfying $\frac{D}{3}\leq  n\leq \frac{D}{2}$. An application of the inverse approximation theorem {\cite[p. 221]{devore1993constructive}} yields that there exists $K>0$ such that, for
    any positive $c<1/3$,
    \begin{eqnarray*}
            1 \lesssim   \omega(g_D,n^{-1}, I) \leq \frac{K}{n} \left(\sum_{k=0}^n E_k(g_D)\right) &\leq& \frac{K}{n} \left(\sum_{k=0}^{\lfloor{3cn}\rfloor} E_k(g_D)\right) + \frac{K}{n} \left(\sum_{k={\lfloor{3cn}\rfloor}+1}^{n} E_k(g_D)\right) \\
            &\leq & 3cK + K E_{\lfloor{3cn}\rfloor+1}(g_D),
    \end{eqnarray*}
    where we have used that
    $E_k(g_D) \leq \norm{g_D}_2\leq 1$.
    Therefore, for $c$ small enough, we conclude that $$1\lesssim E_{\lfloor{3cn}\rfloor+1}(g_D) \leq E_{\lfloor cD\rfloor}(g_D)  \leq 1.$$

   \end{proof}
For $L_p$ spaces with $p\ne2$,
the construction given in \eqref{eq:defh}
needs to be slightly modified. Roughly speaking, we need to perturb $\cos(2 \pi \xi y)$ in order to make it orthogonal to every polynomial up to a certain degree while also keeping control of the $L_2$ and $L_p$ norms of its Fourier transform. In Lemma \ref{lemma:legbess} we collect the results required to show that the modified $h$ given  in \eqref{eq:defh2} below
 still has the desired properties.
\begin{lemma}
\label{lemma:legbess}
   For $m\in \mathbb{N}$, we define the $m$-th Legendre polynomial
   $$P_m(x)=\frac{1
    }{2^m m!} \left[(x^2-1)^m \right]^{(m)};$$
   and the normalized $m$-th Legendre polynomial
    $$\widetilde{P}_m(x)= i^m \sqrt{(2n+1)\pi }P_m(2 \pi x).$$
    Then, the following hold:

\begin{itemize}
\item[\textnormal{(1)}]\; The sets $\{\widetilde{P}_m\}_{m\geq 0}\subset L_2\left([-\frac{1}{2 \pi}, \frac{1}{2 \pi}]\right)$ and $\{\widetilde{P}_{2k}\}_{k\geq 0}\subset L_{2,\text{even}}\left([-\frac{1}{2 \pi}, \frac{1}{2 \pi}]\right)$ are orthonormal systems in the corresponding spaces.

\item[\textnormal{(2)}]\; The Fourier transform of the normalized Legendre polynomials is given by
\begin{equation}
\label{eq:fourlleg}
\int_{-\frac{1}{2 \pi}}^\frac{1}{2 \pi} e^{-2 \pi i \xi x} \widetilde{P}_{2k}(x) dx = \sqrt{\frac{ (4k+1)}{2|\xi| }} J_{2k+1/2}( |\xi|)=: j_k(\xi),
\end{equation}
where the Bessel function $J_\alpha$ is defined by $$ J_\alpha(x) = \sum_{m=0}^\infty \frac{(-1)^m}{m!\, \Gamma(m+\alpha+1)} {\left(\frac{x}{2}\right)}^{2m + \alpha}. $$

 \item[\textnormal{(3)}]\;
For even $f:\mathbb{R} \to \mathbb{C}$ and $g: [-\frac{1}{2 \pi}, \frac{1}{2 \pi}] \to \mathbb{C}$, we define their corresponding Fourier--Bessel and Fourier--Legendre coefficients
$$c_k(f)=\int_{-\infty} ^\infty f(x) j_k(x)  dx \quad \mbox{and}\quad d_k(g)=\int_{-\frac{1}{2 \pi}} ^{\frac{1}{2 \pi}} g(x) \tilde{P}_{2k}(x)  dx,$$ as well as their partial sums 
$$S^{J}_n(f) (x)=\sum_{k=0}^n c_k(f) j_k(x), $$
        $$S^{L}_n(g) (x)=\sum_{k=0}^n d_k(g) \tilde{P}_{2k}(x), $$
 and Riesz means   $$R^{X}_n(h)(x)=\frac{\lambda_0 S^X_0 + \cdots + \lambda _n S_n^X}{\lambda_0 + \cdots + \lambda _n},$$ where $\lambda_k =4k+3$ and $(X,h)\in \{(J,f),(L,g)\}$.
    \\
    Finally, set $\Lambda_n= \lambda_1+ \cdots + \lambda_n$ and let $$V_n^X(h)=\frac{ \Lambda_{2n} R^X(h)_{2n} - \Lambda_n R^X(h)_n}{\Lambda_{2n}- \Lambda_n}= \frac{\lambda_{n+1} S^X_{n+1}(h)+ \cdots + \lambda_{2n} S^X_{2n}(h) }{\Lambda_{2n}- \Lambda_n}.$$ Then,
    \begin{equation}
    \label{eq:VjVl}
        V_n^J(\widehat{g})= \widehat{V_n^L(g)}
    \end{equation} and
    \begin{equation}
    \label{eq:boundVn}
        \norm{V_n^J(f)}_p\leq C(p) \norm{f}_p,\quad 1<p<\infty.
    \end{equation}

   \end{itemize}

    \end{lemma}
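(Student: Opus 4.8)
The plan is to treat (1) and (2) as classical inputs and to concentrate the effort on part (3), whose analytic substance is the uniform bound \eqref{eq:boundVn}.

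For (1), I would start from the orthogonality relation $\int_{-1}^1 P_m(x)P_n(x)\,dx=\frac{2}{2m+1}\delta_{mn}$; the substitution $x\mapsto 2\pi x$ turns it into $\int_{-1/(2\pi)}^{1/(2\pi)}P_m(2\pi x)P_n(2\pi x)\,dx=\frac{1}{\pi(2m+1)}\delta_{mn}$, so that the factor $i^m\sqrt{(2m+1)\pi}$ in the definition of $\widetilde P_m$ makes $\{\widetilde P_m\}$ orthonormal, and, since each $P_{2k}$ is even, $\{\widetilde P_{2k}\}$ is orthonormal in $L_{2,\mathrm{even}}$ as well. For (2), I would invoke the classical identity $\int_{-1}^1 e^{-ixt}P_n(x)\,dx=(-i)^n\sqrt{2\pi/t}\,J_{n+1/2}(t)$ for $t>0$ (equivalently, the plane-wave expansion in spherical Bessel functions and Legendre polynomials), rescale via $x\mapsto 2\pi x$, match the normalizing constants of $\widetilde P_{2k}$, and use that $P_{2k}$ is even to replace $t$ by $|\xi|$; this gives \eqref{eq:fourlleg} exactly.

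For the identity \eqref{eq:VjVl} in (3), I would first record that $j_k=\widehat{\widetilde P_{2k}}$ (with $\widetilde P_{2k}$ extended by $0$), so that by Fourier inversion and evenness $\widehat{j_k}=\widetilde P_{2k}$; the multiplication formula $\int\widehat g\,h=\int g\,\widehat h$ then yields $c_k(\widehat g)=\int\widehat g\,j_k=\int g\,\widehat{j_k}=\int g\,\widetilde P_{2k}=d_k(g)$. Hence the Fourier transform carries the Fourier--Legendre partial sum $S^L_n(g)$ to the Fourier--Bessel partial sum $S^J_n(\widehat g)$, and since $R^X_n$ and $V^X_n$ are fixed linear combinations of the $S^X_j$ with the same coefficients for $X=L$ and $X=J$, we get $V^J_n(\widehat g)=\widehat{V^L_n(g)}$.

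The heart of the lemma --- and the step I expect to be the genuine obstacle --- is \eqref{eq:boundVn}. I would write $V_n^J f(x)=\int_{\mathbb R}\mathcal K_n(x,y)f(y)\,dy$ with $\mathcal K_n(x,y)=\sum_{k\ge 0}m_k^{(n)}j_k(x)j_k(y)$, where $m_k^{(n)}=1$ for $k\le n$, decreases to $0$ for $n<k\le 2n$, and vanishes for $k>2n$; via the exact identity $V_n=\frac{\Lambda_{2n}R_{2n}-\Lambda_nR_n}{\Lambda_{2n}-\Lambda_n}$ with bounded coefficients, this reduces to the uniform $L_p$ boundedness of the Riesz means $R_n^J$. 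For the latter I would sum by parts using the recurrence relations of the $j_k$ (inherited, through the Fourier transform, from the Legendre three-term recurrence, or directly from the recurrence relations for Bessel functions) to express the kernels in terms of Christoffel--Darboux-type kernels at scales $N\asymp n$, insert the asymptotics of $J_\nu(t)$ uniform in the order $\nu\asymp n$ (with the usual case split $|t|\lesssim n$, $|t|\asymp n$, $|t|\gtrsim n$), and thereby dominate $|\mathcal K_n(x,y)|$, uniformly in $n$, by a finite sum of kernels of operators bounded on $L_p(\mathbb R)$ for $1<p<\infty$: a local maximal-function part and oscillatory tail parts controlled by the Hilbert transform and Hardy's inequalities. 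This yields \eqref{eq:boundVn}; alternatively one may quote the known $L_p$ estimates for de la Vall\'ee Poussin --- equivalently Marcinkiewicz-type --- means of Fourier--Bessel / Gegenbauer expansions. The delicate point throughout is making every asymptotic and kernel bound uniform in $n$, which is exactly what a constant $C(p)$ independent of $n$ demands.
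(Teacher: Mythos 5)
Your proposal is correct in structure and, for most of the lemma, follows the same path as the paper: items (1) and (2) are indeed handled as classical facts (the paper cites DLMF (14.17.6) and (18.17.19); your orthogonality computation and the plane-wave identity $\int_{-1}^1 e^{-ixt}P_n(x)\,dx=(-i)^n\sqrt{2\pi/t}\,J_{n+1/2}(t)$ are the same content), and your duality argument $c_k(\widehat g)=\int\widehat g\,j_k=\int g\,\widehat{j_k}=d_k(g)$ is exactly the ``straightforward consequence of \eqref{eq:fourlleg}'' that the paper leaves implicit, after which linearity gives \eqref{eq:VjVl}. For \eqref{eq:boundVn} you make the same reduction as the paper --- $V_n^J$ is a combination of $R_{2n}^J$ and $R_n^J$ with coefficients $\Lambda_{2n}/(\Lambda_{2n}-\Lambda_n)$ and $\Lambda_n/(\Lambda_{2n}-\Lambda_n)$, uniformly bounded because $\Lambda_n\approx n^2$ --- but then you diverge: your primary route is a from-scratch kernel analysis (Christoffel--Darboux-type kernels, Bessel asymptotics uniform in the order, comparison with maximal and Hilbert-transform operators), whereas the paper simply quotes Theorem 1 of Ciaurri--Stempak--Varona on Ces\`aro-type means of Fourier--Neumann series with $\alpha=-1/2$, $\gamma=0$, which is precisely the uniform bound $\norm{R_n^J f}_p\le C(p)\norm{f}_p$ for even $f$ and $1<p<\infty$ (note that $j_k(\xi)=\sqrt{(4k+1)/2}\,|\xi|^{-1/2}J_{2k+1/2}(|\xi|)$ is the Fourier--Neumann system at $\alpha=-1/2$). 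Your hand-made program is essentially a re-proof of that cited theorem; it is plausible and is how such results are established in the literature, but as written it is a plan rather than a proof --- the uniformity in $n$ of every kernel estimate is exactly the nontrivial content --- so if you pursue it you must either carry out those estimates in full or, as you yourself suggest in the alternative and as the paper does, close the step by citation. The citation route buys brevity and certainty; your self-contained route would make the lemma independent of the Fourier--Neumann literature at the cost of several pages of uniform Bessel asymptotics.
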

        \begin{proof}
Item           (1) follows from
\cite[\href{ https://dlmf.nist.gov/14.17.E6}{(14.17.6)}]{NIST:DLMF}, together wih the facts that
 $\tilde{P}_{2k+1}$ is odd and $\tilde{P}_{2k}$ is even.

        Relation \eqref{eq:fourlleg} in statement  (2) follows by a change of variable from \cite[\href{https://dlmf.nist.gov/18.17.E19}{(18.17.19)}]{NIST:DLMF}.

Formula \eqref{eq:VjVl} is a straightforward consequence of \eqref{eq:fourlleg}. Finally, to prove inequality \eqref{eq:boundVn}, taking $\alpha = -1/2$ and $\gamma =0$ in Theorem 1 of \cite{ciaurri}, we deduce that,
    for $1<p<\infty$ and even $f$,
   $$\norm{R^J_n(f)}_p \leq C(p) \norm{f}_p.$$ Thus, using that $\Lambda_n = n^2 + 5n+3$, we also have
   $$\norm{V^J_n(f)}_p \leq C(p)\frac{\Lambda_{2n} +\Lambda_{n}}{\Lambda_{2n}- \Lambda_n}\norm{f}_p \leq C'(p)\norm{f}_p.$$
\end{proof}

\subsection{Construction of extremizers and
proof of  the "$\gtrsim$" part of Theorem \ref{theorem:mainth}
}\label{4.3}


We are now in a position to construct
the extremal (up to  constants)  function in \eqref{eq:mainequation}.
Let
$$g_D(\xi):= \mathbbm{1}_{[-\frac{1}{2 \pi }, \frac{1}{2 \pi }]}(\xi)\cos\left(2 \pi  \xi D\right), \quad\xi\in\mathbb{R},
$$
and
\begin{equation}
\label{eq:defh2}
    h(\xi):=  \mathbbm{1}_{[-\frac{1}{2 \pi }, \frac{1}{2 \pi }]}(\xi) \left( g_D(\xi)- V^L_{N}(g_D)(\xi)\right),\quad\xi\in\mathbb{R}.
\end{equation}

\begin{lemma} Let $1<p<\infty$ and $D\geq 1$.
\label{lemma:bestapprox}
 Assume that
 $4N\leq \lfloor cD \rfloor$ with $c$ as in Lemma \ref{lemma:Ecd}.  Then
\begin{itemize}
\item[\textnormal{(1)}]\; $\widehat{h}(D)\approx 1;$
\item[\textnormal{(2)}]\; $\norm{\widehat{h}}_p \leq C(p);$
\item[\textnormal{(3)}]\; $\widehat{h}^{(n)}(0)=0$ for $0\leq n \leq 2N,$ equivalently$,$
    $$\langle h, Q \rangle :=\int_{-\frac{1}{2 \pi}}^{{\frac{1}{2 \pi}}} h(\xi) Q(\xi) d \xi =0$$ for any polynomial $Q$ of degree less than or equal to  $ 2N $.
\end{itemize}

\end{lemma}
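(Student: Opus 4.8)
The plan is to work throughout in the orthonormal Legendre system $\{\widetilde P_{2k}\}_{k\ge0}$ of $L_{2,\mathrm{even}}\big([-\tfrac1{2\pi},\tfrac1{2\pi}]\big)$ and to exploit that the de la Vallée Poussin operator $V^L_N$ acts diagonally there. Expanding the even function $g_D$ as $g_D=\sum_{k\ge0}d_k(g_D)\widetilde P_{2k}$ in $L_2$, a direct reading of the second expression for $V^X_n$ in Lemma \ref{lemma:legbess}(3) gives $V^L_N(g_D)=\sum_{k\ge0}\mu_k\,d_k(g_D)\widetilde P_{2k}$, where $\mu_k=1$ for $0\le k\le N$, $\mu_k\in[0,1]$ for $N<k\le2N$, and $\mu_k=0$ for $k>2N$. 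In particular $V^L_N(g_D)$ is an even polynomial of degree $\le4N$, so $h=g_D-V^L_N(g_D)$ is a real, even, bounded, compactly supported function whose Legendre coefficients are $d_k(h)=(1-\mu_k)d_k(g_D)$ and vanish for $0\le k\le N$. Statement \textnormal{(3)} is then immediate: $\{\widetilde P_{2k}\}_{k=0}^{N}$ spans the even polynomials of degree $\le2N$, $h$ is $L_2$-orthogonal to each of them, and $\int h\cdot(\text{odd polynomial})=0$ by parity, so $\int_{-1/(2\pi)}^{1/(2\pi)}h(\xi)Q(\xi)\,d\xi=0$ whenever $\deg Q\le2N$; since $h$ is bounded with compact support this is equivalent to $\widehat h^{(n)}(0)=(-2\pi i)^n\int\xi^n h(\xi)\,d\xi=0$ for $0\le n\le2N$.

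For statement \textnormal{(1)} the key is to read $\widehat h(D)$ as the ``energy'' of $g_D$ carried by the Legendre modes of index $>N$. Since $h$ is real, even and supported in $[-\tfrac1{2\pi},\tfrac1{2\pi}]$, with $\langle\cdot,\cdot\rangle_{L_2}$ the inner product of $L_2\big([-\tfrac1{2\pi},\tfrac1{2\pi}]\big)$,
\[
\widehat h(D)=\int_{-1/(2\pi)}^{1/(2\pi)}h(\xi)\cos(2\pi\xi D)\,d\xi=\langle h,g_D\rangle_{L_2}=\|g_D\|_2^2-\sum_{k\ge0}\mu_k\,d_k(g_D)^2=\sum_{k\ge N+1}(1-\mu_k)\,d_k(g_D)^2 .
\]
The upper bound is immediate, $\widehat h(D)\le\|g_D\|_2^2=\int_{-1/(2\pi)}^{1/(2\pi)}\cos^2(2\pi\xi D)\,d\xi\le\tfrac1\pi$. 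For the lower bound, observe that $\sum_{k=0}^{2N}d_k(g_D)\widetilde P_{2k}$ is the $L_2$-orthogonal projection of $g_D$ onto the even polynomials of degree $\le4N$; symmetrising a best approximant shows a best $L_2$-approximant of $g_D$ among all polynomials of degree $\le4N$ may be taken even, so $\sum_{k>2N}d_k(g_D)^2=E_{4N}\big(\cos(2\pi\xi D)\big)^2$. Since $4N\le\lfloor cD\rfloor$ and $E_n$ is nonincreasing in $n$, Lemma \ref{lemma:Ecd} gives $\sum_{k>2N}d_k(g_D)^2\ge E_{\lfloor cD\rfloor}\big(\cos(2\pi\xi D)\big)^2\gtrsim1$, and keeping only these nonnegative terms yields $\widehat h(D)\gtrsim1$; hence $\widehat h(D)\approx1$.

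I expect this lower bound to be the main point: a head-on estimate of $\widehat{V^L_N(g_D)}(D)$ — the Fourier transform, at the high frequency $D$, of a polynomial of degree $4N$ — would, via Markov's inequality for derivatives of polynomials, only succeed when $D\gtrsim N^2$, whereas the energy identity above needs only $D\gtrsim N$, which is precisely what the hypothesis $4N\le\lfloor cD\rfloor$ (through Lemma \ref{lemma:Ecd}) supplies. Statement \textnormal{(2)} is then obtained from \eqref{eq:VjVl}, which gives $\widehat h=\widehat{g_D}-\widehat{V^L_N(g_D)}=\widehat{g_D}-V^J_N(\widehat{g_D})$; as $1<p<\infty$ and $\widehat{g_D}$ is even, \eqref{eq:boundVn} yields $\|\widehat h\|_p\le\big(1+C(p)\big)\|\widehat{g_D}\|_p$, so it remains only to bound $\|\widehat{g_D}\|_p$ uniformly in $D$. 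This is elementary: a direct computation gives $\widehat{g_D}(\xi)=\tfrac12\big(\tfrac{\sin(\xi-D)}{\pi(\xi-D)}+\tfrac{\sin(\xi+D)}{\pi(\xi+D)}\big)$, a sum of two translates of $t\mapsto\tfrac{\sin t}{\pi t}\in L_p(\mathbb R)$ for $p>1$, so $\|\widehat{g_D}\|_p\le\big\|\tfrac{\sin t}{\pi t}\big\|_{L_p(\mathbb R)}=:c_p<\infty$ and $\|\widehat h\|_p\le C(p)$. The remaining points — $L_2$-convergence of the Legendre expansion, the exact values of the $\mu_k$, and $h\in L_1\cap L_2$ so that $\widehat h$ is a well-defined continuous function — are routine.
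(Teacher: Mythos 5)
Your proof is correct and follows essentially the same route as the paper's: the multiplier description of $V^L_N$ in the Legendre system giving $\widehat h(D)\ge \|g_D\|_2^2-\sum_{k\le 2N}d_k(g_D)^2\ge E_{4N}(g_D)^2\gtrsim 1$ via Lemma \ref{lemma:Ecd}, the identity \eqref{eq:VjVl} with the bound \eqref{eq:boundVn} for item (2), and exact reproduction of the coefficients up to index $N$ plus parity for item (3). The only additions are harmless extra detail (the explicit sinc computation of $\widehat{g_D}$, which the paper merely asserts is bounded in $L_p$).
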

\begin{proof}
In order to obtain  (1), we note that $V_N^L(g_D)=\sum_{j=0}^{2N} c_j^N \tilde{P}_{2j}  \langle \tilde{P}_{2j}, g_D\rangle$ with $0\leq c_j^N \leq 1$. Then,
\begin{eqnarray*}
        \widehat{h}(D)&=&\langle g_D , g_D - V_N^L(g_D) \rangle = \langle g_D, g_D \rangle  - \sum_{j=0}^{2N} c_j^N|\langle \tilde{P}_{2j}, g_D\rangle|^2\\
        &\geq& \langle g_D, g_D \rangle- \sum_{j=0}^{2N}  |\langle \tilde{P}_{2j}, g_D\rangle|^2  \geq E_{4N}(g_D)^2 \approx 1,
 \end{eqnarray*}
   where  in the last estimate we have used Lemma \ref{lemma:Ecd}.

For (2), we observe that $\norm{\widehat{g}_D}_p\leq C(p)$ and, in view of relation \eqref{eq:VjVl}, $\widehat{h}=\widehat{g}_D -V_n^J(\widehat{g}_D)$; so the result follows from inequality \eqref{eq:boundVn}.

   Finally, to see that (3) holds, we observe that, for any $0\leq k \leq N$,
   $$\langle V_N^L(g_D), \widetilde{P}_{2k}\rangle =\langle S_{N+1}^L (g_D), \widetilde{P}_{2k}\rangle = \langle g_D, \widetilde{P}_{2k}\rangle.$$ Thus, $h$ is orthogonal to any even polynomial of degree $\leq 2N$ and, since $h$ is even, we conclude that it is also orthogonal to any odd polynomial.

\end{proof}

   We are finally in a position to prove the "$\gtrsim$" part of relation \eqref{eq:mainequation}. We divide the proof into two parts: $d=1$ and $d>1$.

\begin{proof}[Proof of the lower bound in Theorem \ref{theorem:mainth} for $d=1$]
    For a positive large enough $y$, set $$M:= \lambda y\quad \mbox{and} \quad D:=M y=\lambda y^2,$$ with $\lambda>0$ being a  small constant depending on $\alpha$ in a way to be defined later. Since $y$ is large enough, there exists an integer $N$ such that $$\frac{1}{8} c yM  <N<\frac{1}{4} \lfloor c yM\rfloor,$$
with $c$ given in Lemma \ref{lemma:Ecd}.

For $D $ and $N$ as above and $h$ given in \eqref{eq:defh2}, set $$h_M(\xi) :=M^{-1}h(\xi/M)$$ and
$$f(x):=\widehat{h_M}(x) e^{-\pi \alpha x^2}.$$
It follows from Lemma \ref{lemma:bestapprox} that $$\norm{f e^{\pi \alpha x^2}}_p = \norm{\widehat{h_M}}_p=M^{-\frac{1}{p}} \norm{\widehat{h}}_p \lesssim_p M^{-\frac{1}{p}}$$ and  $$e^{ \pi \alpha y^2}f(y) =\widehat{h_M}(y)= \widehat{h}(D) \approx 1.$$
All that remains is to bound $\norm{\widehat{f}(\xi) e^{\pi \alpha \xi^2}}_q$ from above.

Before proceeding with the estimate of $\widehat{f}$, we define $Q_{2N}$ to be the Taylor expansion of degree $2N$ of $e^{-\pi \alpha^{-1}x^2}$ at $0$. Then,
for any $x \in \mathbb{R}$, $$|Q_{2N}(x)-e^{-\pi \alpha^{-1}x^2}|\leq  \frac{(\pi \alpha^{-1} x^2)^{N+1}}{(N+1)!}.$$

Let $\mu$ be a large number to be chosen later and assume that $|\xi|\leq \mu M$.
Using that $h$ is orthogonal to any polynomial of degree $2N$,  we have
\begin{eqnarray}
\nonumber
|\widehat{f}(\xi)|&=&\alpha^{-\frac{1}{2}}
\Big|\int_{-\frac{M}{2\pi}}^{\frac{M}{2\pi}} h_M(\eta) e^{-\pi \alpha^{-1} (\xi - \eta)^2} d \eta\Big|\\
\nonumber
&=& \alpha^{-\frac{1}{2}}
\Big|\int_{-\frac{M}{2\pi}}^{\frac{M}{2\pi}} h_M(\eta) \left(e^{-\pi \alpha^{-1} (\xi - \eta)^2} -Q_{2N}(\xi - \eta)\right) d \eta\Big|
\\
\nonumber
    &\lesssim_\alpha &M^{\frac{1}{2}} \norm{h_M}_2 \sup_{|\eta| \leq \mu M+ M} \big|e^{- \pi \alpha^{-1} \eta^2} -Q_{2N}(\eta)\big|\\
    \label{eq:lowfreq2}
    &\lesssim_\alpha &M^{\frac{1}{2}} \norm{h_M}_2 \frac{(\pi \alpha^{-1}(\mu +1)^2 M^2 )^{N+1}}{(N+1)!}.
\end{eqnarray}
For $|\xi|> \mu M$, by monotonicity, we derive that
\begin{eqnarray}
\label{eq:highfreq2}
    |\widehat{f}(\xi)|
    &=&\alpha^{-\frac{1}{2}}
    \Big|\int_{-\frac{M}{2\pi}}^{\frac{M}{2\pi}} {h}_M(\eta) e^{-\pi \alpha^{-1} (\xi - \eta)^2} d \eta \Big|\nonumber\\
    &\lesssim_\alpha& e^{-\pi \alpha^{-1} (|\xi| - \frac{M}{2 \pi})^2} M^{\frac{1}{2}} \norm{h_M}_2.
   \end{eqnarray}

With the previous bounds in hand, we now proceed to estimate
$$\norm{\widehat{f} e^{\pi \alpha \xi^2}}_q.$$
First, using  \eqref{eq:highfreq2}, we deduce that
\begin{eqnarray*}
I&:=&\int_{\mu M}^\infty e^{q \pi \alpha \xi^2} |\widehat{f}(\xi)|^q d \xi \\
&\lesssim_\alpha & M^{\frac{q}{2}}\norm{h_M}_2^q\int_{\mu M}^\infty e^{q \pi \alpha \xi^2} e^{-q\pi \alpha \left(\frac{M/(2 \pi)-\xi}{\alpha}\right)^2} d \xi.
\end{eqnarray*}
Since $\alpha<1$, there exists a  large enough $\mu$ depending only on $\alpha$ such that 
$$\xi^2 - \left(\frac{M/(2 \pi)-\xi}{\alpha}\right)^2 \leq \frac{1- \alpha^{-2}}{2} \xi^2 $$ for $\xi \geq \mu M$ and for any $M$. Fix this $\mu$.
    Then for such $\mu$ and any $M$,
    \begin{eqnarray*}
    I &\lesssim_\alpha& M^{\frac{q}{2}} \norm{h_M}_2^q \int_{\mu M}^\infty  e^{\frac{q\pi}{2} \alpha (1- \alpha^{-2}) \xi ^2} d \xi  \\&\lesssim_{\alpha,q}&  M^{\frac{q}{2}-1} \norm{h_M}_2 ^q e^{\frac{q\pi}{2}  \alpha (1-\alpha^{-2}) \mu ^2 M^2},
    \end{eqnarray*}
    where we have used that  $\displaystyle\int_y ^\infty e^{-u^2} du \lesssim y^{-1} e^{-y^2}$. 

Second, in light of  inequality \eqref{eq:lowfreq2},
applying now 
 the estimate $\displaystyle\int_0 ^y e^{u^2} du \lesssim y^{-1} e^{y^2}$, we obtain


\begin{eqnarray*}II&:=&\int_0^{\mu M} e^{q \pi \alpha \xi^2} |\widehat{f}(\xi)|^q  d \xi \\&\lesssim_{\alpha,q}
& M^{\frac{q}{2}-1} \norm{h_M}_2^q \left(e^{ \pi \alpha \mu ^2 M^2 } \frac{(\pi \alpha^{-1}(\mu +1)^2M^2)^{N+1}}{(N+1)!}\right)^q.
\end{eqnarray*}

Here we recall that $N> \frac{c}{8\lambda} M^2$ with $\lambda$ to be chosen. Thus, using the Stirling formula, we have, with a constant $C:=C(\alpha)$,
$$
\frac{(\pi \alpha^{-1}(\mu +1)^2M^2)^{N+1}}{(N+1)!} \leq \frac{(CM)^{2N+2}}{\left( \lambda ^{-1} M^2\right)^{N+1}}= (C^2\lambda)^{N+1} \leq e^{\frac{c}{8 \lambda} M^2 \log{C^2\lambda}}.$$
    Therefore, for $\lambda:=\lambda(\alpha)$ small enough and for some $K:=K(\alpha,q)>0$,
    $$I+II\lesssim_{\alpha,q}  e^{-KM^2} .$$
Proceeding analogously for $\displaystyle\int_{-\infty}^0 |\widehat{f}(\xi)|^q e^{q \pi \alpha \xi^2} d \xi$,  we deduce that, for $y$ large enough,
$$\norm{\widehat{f} e^{\pi \alpha \xi^2}}_q\lesssim_{\alpha,q}  e^{-KM^2},$$
completing the estimate of
 $\norm{\widehat{f} e^{\pi \alpha \xi^2}}_q$ from above.

Finally, recalling that $M \approx_\alpha y$, we conclude that, for $y$ large enough and  some positive $K':=K'(\alpha,q)$,
$$\sup_{f\in L_1+L_2}\frac{|f(y)|}{\norm{f(x) e^{\pi \alpha |x|^2}}_p + \norm{\widehat{f}(\xi) e^{\pi \alpha |\xi|^2}}_q} \gtrsim_{\alpha,p,q} \frac{e^{-\pi \alpha y^2}}{y^{-\frac{1}{p}}+e^{-K'y^2}} \approx_{\alpha,p,q} (1+|y|)^{\frac{1}{p}} e^{-\pi \alpha y^2}. $$

This completes the proof of   Theorem \ref{theorem:mainth}  for $d=1$.
\end{proof}

    The multidimensional result  follows directly from
the one-dimensional case.
    Indeed,

\begin{proof}[Proof of the lower bound in Theorem \ref{theorem:mainth} for $d>1$]
Let $y=(y_1, \dots, y_d) \in \mathbb{R}^d$. Since the quantity $$\sup_{f\in L_1(\mathbb{R}^d)+L_2 (\mathbb{R}^d)}\frac{|f(y)|}{\norm{f(x) e^{\pi \alpha |x|^2}}_p + \norm{\widehat{f}(\xi) e^{\pi \alpha |\xi|^2}}_q}$$ only depends on $|y|$, we can assume without loss of generality that $y_1=\cdots y_d= {|y|}/{\sqrt{d}}$.
   Let $f: \mathbb{R} \to \mathbb{C}$ and define $g(x_1, \dots, x_d)=f(x_1) \cdots f(x_d)$. Then we also have $\widehat{g}(\xi_1, \dots, \xi_d)=\widehat{f}(\xi_1) \cdots \widehat{f}(\xi_d)$. Clearly,
    $$\norm{g(y) e^{\pi \alpha |y|^2}}_p = \norm{f(y) e^{\pi \alpha y^2}}_p^{d}\quad\mbox{ and}\quad  \norm{\widehat{g}(\xi) e^{\pi \alpha |\xi|^2}}_q = \norm{\widehat{f}(\xi) e^{\pi \alpha \xi^2}}_q^{d}.$$
    Hence, from the one-dimensional lower bound we deduce that \begin{eqnarray*}
            \sup_{f\in L_1(\mathbb{R}^d)+L_2 (\mathbb{R}^d)}\frac{|f(y)|}{\norm{f(x) e^{\pi \alpha |x|^2}}_p + \norm{\widehat{f}(\xi) e^{\pi \alpha |\xi|^2}}_q} &\geq &\\
    \sup_{f\in L_1(\mathbb{R})+L_2 (\mathbb{R})}\Big(\frac{|f(|y|/\sqrt{d})|}{\norm{f(x) e^{\pi \alpha |x|^2}}_p + \norm{\widehat{f}(\xi) e^{\pi \alpha |\xi|^2}}_q}\Big)^d  
    &\gtrsim& \hspace{-3mm}_{\alpha,d,p,q} \; e^{-\pi \alpha |y|^2} (1 + |y|)^{\frac{d}{p}}.
    \end{eqnarray*}     This completes the proof of Theorem \ref{theorem:mainth}.
\end{proof}

\vspace{3mm}
\section{Final remarks and open problems}
\label{Section 5}

Throughout this section we restrict ourselves  to the case $p=q=2$, unless otherwise stated.

   1.
It is natural to expect that additional assumptions on a function
$f$ satisfying
\begin{equation}\label{vsp5}
\norm{f(x) e^{\pi \alpha |x|^2}}_2 + \norm{\widehat{f}(\xi) e^{\pi \alpha |\xi|^2}}_2 <\infty
\end{equation}
with $0<\alpha<1$
imply a stronger decay rate at infinity than 
 in the general case.
In \cite{kulikov2023gaussian},
the authors proved that
 \eqref{vsp5} with $0<\alpha<1$
 implies that
$|f(x)|\lesssim e^{-\pi \alpha |x|^2}$ for any $x\in \mathbb{R}$
 in the case when
$f(x)=\mathcal{L}\mu(\pi|x|^2)$, where
$\displaystyle\mathcal{L}\mu(x)$ is the Laplace transform
of
 a finite measure $\mu$ with support on the positive real line, i.e.,
 $\mathcal{L}\mu(x)=\int_0^\infty e^{-xt} d\mu(t)$.

The following result
is the counterpart of Theorem \ref{theorem:mainth}
for radial functions:




\begin{theorem}\label{radial}
Let $0<\alpha<1$. Then, for $y\in \mathbb{R}^d$,
\begin{equation*}
   \sup_{ \tiny \begin{array}{c}
        f\in L_1+L_2 \\
        f \text{ radial}
   \end{array}
   }\frac{|f(y)|}{\norm{f(x) e^{\pi \alpha |x|^2}}_2 + \norm{\widehat{f}(\xi) e^{\pi \alpha |\xi|^2}}_2} \approx _{\alpha, d}  (1+|y|)^{\frac{2-d}{2}}e^{-\pi \alpha |y|^2}.
\end{equation*}
\end{theorem}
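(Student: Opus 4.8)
The plan is to reduce Theorem \ref{radial} to a one‑dimensional problem on the half‑line governed by a Hankel transform. Both sides depend only on $|y|=:y>0$. For radial $f\in L_1+L_2$ with profile $f_0$ (so $f(x)=f_0(|x|)$), polar coordinates give $\norm{f(x)e^{\pi\alpha|x|^2}}_2^2=c_d\int_0^\infty|f_0(r)|^2e^{2\pi\alpha r^2}r^{d-1}\,dr$ and $\norm{\widehat f(\xi)e^{\pi\alpha|\xi|^2}}_2^2=c_d\int_0^\infty|F_0(\rho)|^2e^{2\pi\alpha\rho^2}\rho^{d-1}\,d\rho$, where $F_0$ is the profile of $\widehat f$. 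Setting $u(r):=r^{(d-1)/2}f_0(r)$ and $U(\rho):=\rho^{(d-1)/2}F_0(\rho)$ removes the weight $r^{d-1}$, and the radial Fourier inversion formula yields $U=\mathcal H u$ with $\mathcal Hv(\rho):=2\pi\int_0^\infty v(r)\sqrt{r\rho}\,J_\nu(2\pi r\rho)\,dr$, $\nu:=d/2-1$, which is the Fourier transform on radial $L_2(\mathbb R^d)$ functions conjugated by the unitary $f_0\mapsto r^{(d-1)/2}f_0$, hence a unitary involution on $L_2((0,\infty))$. As $|f(y)|=y^{-(d-1)/2}|u(y)|$, the theorem is equivalent to
$$\sup_u\frac{|u(y)|}{\norm{u(r)e^{\pi\alpha r^2}}_2+\norm{\mathcal Hu(\rho)\,e^{\pi\alpha\rho^2}}_2}\approx_{\alpha,d}(1+y)^{1/2}e^{-\pi\alpha y^2},$$
the supremum over all $u$ with finite denominator; the target exponent $1/2$ is now \emph{independent of $d$}. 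For $d=1$, $J_{-1/2}(x)=\sqrt{2/(\pi x)}\cos x$, so $\mathcal H$ is the cosine transform and this is the even‑function case of Theorem \ref{theorem:mainth} with $d=1$; for odd $d$, $\sqrt{r\rho}\,J_\nu(2\pi r\rho)$ is an elementary combination of $\cos$, $\sin$ and powers, and everything below simplifies accordingly.

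For the upper bound I would transplant the proof of Lemma \ref{lemma:mainl}. From $u(y)=\int_0^\infty U(\rho)\sqrt{y\rho}\,J_\nu(2\pi y\rho)\,d\rho$ one writes $u(y)=\int_0^\infty U(\rho)\bigl(\sqrt{y\rho}\,J_\nu(2\pi y\rho)-\mathcal H\Psi(\rho)\bigr)\,d\rho+\int_0^\infty u(r)\Psi(r)\,dr$ for a suitable $\Psi$ supported in an interval of length $\sim\delta$ around $r=y$. The role of the mollifier $\phi$ of Lemma \ref{prop:phi} is played by a $\Psi$ for which $\mathcal H\Psi$ agrees with $\sqrt{y\rho}\,J_\nu(2\pi y\rho)$ to order $m_0$ as $\rho\to0$: after factoring out the common $\rho^{\nu+1/2}$, both $\rho^{-\nu-1/2}\sqrt{y\rho}\,J_\nu(2\pi y\rho)$ and $\rho^{-\nu-1/2}\mathcal H\Psi(\rho)$ are even entire functions of $\rho$, so the matching condition is a reproducing condition for polynomials of degree $\le m_0$ with respect to the weight $r^{d-1}$ on that interval --- exactly the situation of Theorem \ref{theorem:niktype} and Lemma \ref{prop:phi}, now with the weight $r^{d-1}$. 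Running the estimates of Lemma \ref{lemma:mainl} with $\delta\approx y$ and $m\approx y^2$, no extra power of $y$ appears because the weight $r^{d-1}$ is already built into $\norm{u\,e^{\pi\alpha r^2}}_2$, and one obtains the exponent $1/2$ in $u$, i.e. $(2-d)/2$ for $f$. The size bound for $\mathcal H\Psi$ and the order‑$m_0$ remainder estimate follow from the power series of $J_\nu$ as in Lemma \ref{prop:phi}(3)--(4), the factorial gain coming once more from $n!\ge n^ne^{-n}$.

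For the lower bound I would transplant the construction of Section \ref{4.3}, replacing $\cos(2\pi\xi D)$ by the frequency‑$D$ Bessel kernel $g_D(\xi):=\mathbbm{1}_{[0,1/(2\pi)]}(\xi)\,\sqrt{\xi}\,J_\nu(2\pi\xi D)$, setting $h:=g_D-P^*$ with $P^*$ the $L_2([0,1/(2\pi)])$‑orthogonal projection of $g_D$ onto a fixed $(N+1)$‑dimensional Chebyshev‑type subspace (polynomials, respectively Gaussian‑weighted powers, of degree $\le 2N$, according to how one organizes the estimate of $\widehat f$), and taking $f$ to be the radial function whose $u=r^{(d-1)/2}f_0$ equals $e^{-\pi\alpha r^2}$ times the $\mathcal H$‑bandlimited function with $\mathcal H$‑data the dilate $h_M$, mirroring \eqref{eq:defh2}. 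Since $p=q=2$ here these are orthogonal projections, so the de la Vall\'ee Poussin machinery, and in particular the $L_p$‑multiplier bound \eqref{eq:boundVn}, is not needed; the Bessel--Legendre identities giving $\norm{\mathcal Hh}_2$ and the analogue of $\widehat h(D)\approx1$ are the half‑integer case of Lemma \ref{lemma:legbess}(1)--(2). One then needs the Hankel analogue of the estimates \eqref{eq:lowfreq2}--\eqref{eq:highfreq2} on $\widehat f$: since the Hankel transform of a Gaussian is again a Gaussian (times $\rho^{\nu}$), $\mathcal Hu$ equals, up to $e^{-\pi\rho^2/\alpha}$, an integral of $h_M$ against a Gaussian‑weighted even entire function of the frequency, and the orthogonality of $h$ kills the low‑frequency part just as the orthogonality of $h$ to $Q_{2N}$ does in Section \ref{4.3}; tracking the exponent gives $|u(y)|\gtrsim(1+y)^{1/2}e^{-\pi\alpha y^2}$, i.e. $|f_0(y)|\gtrsim(1+y)^{(2-d)/2}e^{-\pi\alpha y^2}$. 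The multidimensional‑from‑one‑dimensional step at the end of Section \ref{4.3} is unnecessary, as we work on the half‑line throughout.

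The main obstacle I expect is the general‑order Bessel analogue of Lemma \ref{lemma:Ecd}: with constants uniform in $D\ge1$, a Bessel wave of frequency $D$ cannot be approximated in $L_2$ on a fixed interval by polynomials (or Gaussian‑weighted polynomials) of degree $\ll D$. For half‑integer order (odd $d$, and literally $g_D\propto\cos(2\pi\xi D)$ for $d=1$, $g_D\propto\sin(2\pi\xi D)$ for $d=3$) this is immediate, $g_D$ being an elementary combination of $\cos(2\pi\xi D)$, $\sin(2\pi\xi D)$ and powers of $\xi$. In general it follows from the uniform Bessel asymptotics $\sqrt x\,J_\nu(x)=\sqrt{2/\pi}\,\cos\!\bigl(x-\tfrac{\nu\pi}{2}-\tfrac{\pi}{4}\bigr)+O(x^{-1})$, which show that $g_D$ oscillates at frequency $\sim D$ on any compact subinterval of $(0,\infty)$, together with the same $L_2$‑modulus‑of‑continuity and inverse‑approximation‑theorem argument as in Lemma \ref{lemma:Ecd}; one only checks that the non‑oscillatory region $\xi D\lesssim1$ near the origin, where $g_D$ is small and smooth, does not spoil the lower bound on the modulus of continuity, which it does not, since the region $\xi\gtrsim1/D$ already carries a modulus $\gtrsim1$.
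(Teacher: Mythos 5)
Your overall route is essentially the one the paper itself takes: pass to the radial profile (you flatten the measure by conjugating with $r^{(d-1)/2}$ and work with the Hankel transform, while the paper keeps the measure $x^{d-1}dx$ and uses the kernel $L(x|\xi|)$ --- a purely cosmetic difference), prove the upper bound with a localized kernel reproducing polynomials against the weight $r^{d-1}$ near $r=y$ (the paper's Lemma \ref{prop:phirad}), and prove the lower bound by subtracting from a frequency-$D$ Bessel wave its projection onto a polynomial space, with the key new input being the Bessel analogue of Lemma \ref{lemma:Ecd} obtained from the uniform asymptotics $\sqrt{x}J_\nu(x)=\sqrt{2/\pi}\cos(x-\nu\pi/2-\pi/4)+O(x^{-1})$ (the paper's Lemma \ref{lemma:bestapprox2}). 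So the architecture is sound and matches the paper.

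There is, however, a concrete gap in your lower bound when $d$ is even. After the Gaussian multiplication, the profile of $\widehat f$ in your picture is, up to $e^{-\pi\rho^2/\alpha}$, the pairing of $h_M$ with $s\mapsto\sqrt{\rho s}\,I_\nu(2\pi\rho s/\alpha)\,e^{-\pi s^2/\alpha}$ (Weber's second exponential integral), and this kernel carries the factor $s^{\nu+1/2}=s^{(d-1)/2}$. The orthogonality you build in --- to plain polynomials, or to plain Gaussian-weighted powers, of degree $\le 2N$ --- therefore does not kill the low-frequency range: what you can subtract is a polynomial approximation of a function with an $s^{(d-1)/2}$ endpoint singularity, whose best $L_2$ approximation error decays only algebraically in $N$, whereas the argument must produce an error of size roughly $e^{-cM^2}$ (with $N\approx M^2$) to beat the factor $e^{\pi\alpha\mu^2M^2}$ coming from $\norm{\widehat f e^{\pi\alpha|\xi|^2}}_2$ on $|\xi|\le\mu M$. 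The needed orthogonality is to $s^{(d-1)/2}\times(\text{polynomials})$, i.e.\ to polynomials in $L_2(s^{d-1}ds)$, equivalently to all $d$-variate polynomials after radialization --- which is exactly why the paper takes $P^*$ to be the projection in the weighted space and then kills low frequencies by replacing the $d$-dimensional Gaussian $e^{-\pi|\xi-\eta|^2/\alpha}$ by its Taylor polynomial in $\eta$. With that correction your $\widetilde E_N$ lower bound goes through as you describe (restrict to $[M/2,M]$, where the weight is comparable to a constant, and argue as in Lemma \ref{lemma:Ecd}). A related caution on the upper bound: matching $\mathcal H\Psi$ to $\sqrt{y\rho}J_\nu(2\pi y\rho)$ via the power series at $\rho=0$ produces a remainder of size $(Cy|\xi|/m_0)^{m_0}$, with the large constant $y$ rather than the paper's $\delta=k(y)y$; the optimization of Lemma \ref{lemma:mainl} as written (small $Nk$, large $Nk\log N$) uses the $\delta$-sized constant, so you should instead Taylor-expand $r\mapsto\sqrt{r\rho}J_\nu(2\pi r\rho)$ around $r=y$ on $[y-\delta,y+\delta]$, with uniform derivative bounds from the Poisson integral representation of $J_\nu$, and invoke the reproducing property --- as in the paper's Lemma \ref{prop:phirad}(3) --- or else redo the parameter choice to absorb the loss.
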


In particular, we note that,
for any radial function $f$,
 \eqref{vsp5} with $0<\alpha<1$ and $d\ge 2$ implies that $|f(x)|\lesssim e^{-\pi \alpha |x|^2}$.





\begin{proof}[Sketch of the proof of Theorem
\ref{radial}]
First, we recall that for any radial  function $f(x)=f_0(|x|)$ its Fourier transform
 is given by
\begin{equation}
\label{eq:radialfourier}
    \widehat{f}(|\xi|) =  \int_{0}^{\infty} f_0(x) L(x |\xi|)  x^{d-1} dx, \quad  L(z) :=c_d\frac{J_{\frac{d-2}{2}}(2 \pi z)}{z^{\frac{d-2}{2}}},
\end{equation}
 where $J_\alpha$ is the Bessel function of order $\alpha$ and $c_d$ is a normalization constant. This allows us to consider Theorem \ref{radial} as a version of the one-dimensional case of Theorem \ref{theorem:mainth} with $L(x |\xi|)$ playing the role of 
  $\exp ({2 \pi i x \xi})$.

  Second, for the "$\lesssim$" part we adapt the one-dimensional case of Lemmas \ref{prop:phi} or \ref{prop:multiphi} in the following way:

 \begin{lemma}

    \label{prop:phirad}
    Let $\delta>0, m \geq 0,$  and $ \max(1,\lceil m \rceil)\leq m_0 \in \mathbb{N}$.  Define
    \begin{equation}
    \begin{aligned}
        \label{eq:raddefphi}
        \phi_0(x)&:=  \mathbbm{1}_{[y-\delta,y+\delta]}(x)   K_{m_0}( x) \left|1+ \frac{x-y}{\delta}\right|^{2m},  \quad x \in \mathbb{R},\\
        \phi(x)&:=\phi_0(|x|), \quad x \in \mathbb{R}^d,
        \end{aligned}
     \end{equation}
     where $K_{m_0}$ satisfies the following modification of \eqref{eq:repro}:
     $$ Q(y)= \int_{[y-\delta,y+\delta]} K_{m_0}(x) Q(x)   \left|1+ \frac{x-y}{\delta}\right|^{2m} |x|^{d-1}dx,$$
     for any polynomial $Q$ of degree $m_0$.

    Then there exists a constant $C:=C(d)>0$
    such that, for any $y>\delta$ large enough, the following hold:
 \begin{enumerate}[label=\normalfont{(\arabic*)}]
    \item  $\displaystyle\left(\int_{y- \delta}^{y+ \delta}  |\phi_0(x)|^{2} \big|1+ \frac{x-y}{\delta}\big|^{- 2 m}x^{d-1}d x\right)^{\frac{1}{2}} \leq  C \left(\frac{y m_0}{ \delta}\right)^{\frac{1}{2}} y^{-\frac{d}{2}}; $


    \item \;
    $\displaystyle
    |\widehat{\phi}(|\xi|)| \leq C^{m_0};
    \quad \xi \in \mathbb{R}^d;
$

    \item
 \;
    $ \displaystyle \left | \widehat{\phi}(|\xi|) -  L(y|\xi|) \right| \leq C^{m_0} \big(\frac{\delta  |\xi|}{m_0
    }\big)^{m_0}  .$

    \end{enumerate}
    \end{lemma}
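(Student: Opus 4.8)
The plan is to mimic the one‑dimensional arguments behind Lemmas~\ref{prop:phi} and \ref{prop:multiphi}, with only two changes. First, the extra polar‑coordinate factor $|x|^{d-1}$ is absorbed into the reproducing weight — which is exactly how $K_{m_0}$ is normalised in \eqref{eq:raddefphi} — so that $\widehat{\phi}(|\xi|)=\int_{y-\delta}^{y+\delta}\phi_0(r)L(r|\xi|)r^{d-1}\,dr$ by \eqref{eq:radialfourier}. Second, the exponential $e^{-2\pi i x\xi}$ is replaced by $L(x|\xi|)$, and the key point is that $L$ obeys the same pointwise bounds as the exponential: from the spherical‑average representation $L(r|\xi|)=\int_{S^{d-1}}e^{-2\pi i r|\xi|\,\omega_1}\,d\sigma(\omega)$ (equivalently, the integral defining $J_{(d-2)/2}$), differentiating under the integral sign and using $|\omega_1|\le1$ gives $\sup_{z\ge0}|L^{(k)}(z)|\le C(d)\,(2\pi)^k$ for every $k\ge0$; in particular $\norm{L}_\infty\le C(d)$.

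Item~(1) is a Nikolskii‑type inequality. Since $\phi_0=\mathbbm{1}_{[y-\delta,y+\delta]}K_{m_0}\,|1+\tfrac{\cdot-y}{\delta}|^{2m}$ and $K_{m_0}$ is the $L^2$‑reproducing kernel at $y$ for the weight $w(x)=|1+\tfrac{x-y}{\delta}|^{2m}|x|^{d-1}$ on $[y-\delta,y+\delta]$, the left‑hand side of (1) equals $\norm{K_{m_0}}_{2,w}^2=K_{m_0}(y)=\sup_{\deg P\le m_0}|P(y)|^2/\norm{P}_{2,w}^2$. After the substitution $x=y+\delta t$ and using that $|x|\approx_d y$ on $[y-\delta,y+\delta]$ (here the hypothesis $y>\delta$, and in the application $\delta\lesssim y$, enters), this supremum is $\approx_d(\delta y^{d-1})^{-1}\sup_{\deg\widetilde P\le m_0}|\widetilde P(0)|^2/\norm{\widetilde P}_{2,(1+t)^{2m}}^2$, and the last supremum is $\le C(d)\,m_0$ by Theorem~\ref{theorem:niktype} applied with $d=1$, $p=2$, $\alpha=-1$ (recall $m\le m_0$). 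Hence $\norm{K_{m_0}}_{2,w}\le C(d)(m_0/(\delta y^{d-1}))^{1/2}=C(d)(ym_0/\delta)^{1/2}y^{-d/2}$, which is (1). Splitting the weight and applying Cauchy--Schwarz exactly as in \eqref{vsp1} then yields $\norm{\phi_0}_{1,\,r^{d-1}dr}\le\norm{K_{m_0}}_{2,w}\big(\int_{y-\delta}^{y+\delta}|1+\tfrac{r-y}{\delta}|^{2m}r^{d-1}dr\big)^{1/2}\le C(d)\,m_0^{1/2}2^m\le C^{m_0}$, and item~(2) follows from $|\widehat{\phi}(|\xi|)|\le\norm{L}_\infty\norm{\phi_0}_{1,\,r^{d-1}dr}$.

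For item~(3), let $Q$ be the degree‑$m_0$ Taylor polynomial of $r\mapsto L(r|\xi|)$ at $r=y$; then $Q(y)=L(y|\xi|)$, and the derivative bound for $L$ gives $|L(r|\xi|)-Q(r)|\le C(d)(2\pi\delta|\xi|)^{m_0+1}/(m_0+1)!$ for $|r-y|\le\delta$. Feeding $Q$ into the reproducing formula of the statement,
$$\widehat{\phi}(|\xi|)-L(y|\xi|)=\int_{y-\delta}^{y+\delta}K_{m_0}(r)\,\Big|1+\tfrac{r-y}{\delta}\Big|^{2m}\big(L(r|\xi|)-Q(r)\big)\,r^{d-1}\,dr,$$
whence $|\widehat{\phi}(|\xi|)-L(y|\xi|)|\le C(d)\frac{(2\pi\delta|\xi|)^{m_0+1}}{(m_0+1)!}\norm{\phi_0}_{1,\,r^{d-1}dr}$. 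Using $(m_0+1)!\ge((m_0+1)/e)^{m_0+1}$ this is $\le C^{m_0}(\delta|\xi|/m_0)^{m_0}$ whenever $\delta|\xi|\le m_0$; when $\delta|\xi|>m_0$ one instead bounds the left side by $|\widehat{\phi}(|\xi|)|+|L(y|\xi|)|\le C^{m_0}$ (item~(2) and $\norm{L}_\infty\le C(d)$), which is again $\le C^{m_0}(\delta|\xi|/m_0)^{m_0}$. This is item~(3).

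Given Lemma~\ref{prop:phi} and Theorem~\ref{theorem:niktype}, the argument is essentially routine; the only points requiring care are the bookkeeping of the $|x|^{d-1}$ weight (checking that $|x|\approx_d y$ on the support so it does not spoil the polynomial inequality) and, more importantly, the derivative bounds for $L$. This last ingredient — which I would isolate via the spherical representation above — is what makes the estimates in (2)--(3) come out with the same constants as in the Euclidean case, and is exactly what lets the upper bound in Theorem~\ref{radial} be obtained from the estimates above along the lines of Lemma~\ref{lemma:mainl}, with $L(r|\xi|)$ playing the role of $e^{2\pi i r\xi}$.
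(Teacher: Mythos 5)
Your proof is correct and follows essentially the same route as the paper's: item (1) by rescaling $[y-\delta,y+\delta]$ to $[-1,1]$ and invoking the Nikolskii-type bound of Theorem \ref{theorem:niktype} (in one dimension, $p=2$, $\alpha=-1$), item (2) by the Cauchy--Schwarz splitting exactly as in \eqref{vsp1}, and item (3) by pairing the reproducing property with the Taylor remainder of $r\mapsto L(r|\xi|)$, whose derivative bounds you extract from the same integral representation for $J_{\frac{d-2}{2}}$ that the paper cites. The one point where you deviate is the treatment of the Jacobian factor $|x|^{d-1}$ in (1): you replace it by $y^{d-1}$ via $|x|\approx_d y$ on $[y-\delta,y+\delta]$, which requires $\delta\le c\,y$ with a fixed $c<1$ and makes your constant depend on an upper bound for $\delta/y$, whereas the lemma asserts $C=C(d)$ for every $y>\delta$. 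The paper instead keeps $|\delta t+y|^{d-1}$ inside the weight and applies the generalized-polynomial Nikolskii inequality \eqref{vsp} to $F=|P|^2\,|1+t|^{2m}\,|\delta t+y|^{d-1}$ (generalized degree $2m_0+2m+d-1$), which uses only the exact value $y^{d-1}$ of the weight at the evaluation point and hence yields a constant depending on $d$ alone, uniformly for all $y>\delta$. Since in the application $\delta=k(y)\,y$ with $k$ small, your version is sufficient for the proof of Theorem \ref{radial}, but to obtain the lemma exactly as stated you should replace the comparability step by the paper's device.
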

The proof of items (1) and (2) follows just as in Lemma \ref{prop:phi}.  To verify item (3), we use the fact that if $P_y(x)$ is the Taylor polynomial of degree $m_0-1$ at the point $y$ of the function $$x \mapsto L(x |\xi|), $$ we have
$$\left | P_y(x) -  L(x |\xi|) \right| \leq C^{m_0}
 \frac{(|x-y| \xi)^{m_0 }}{(m_0)!},$$ which follows from the representation $$c_d \frac{J_{\frac{d-2}{2}}(t)}{t^{\frac{d-2}{2}}}=\int_{-1}^1  (1-u^2)^{\frac{d-3}{2}} e^{-itu}du,$$ see \cite[\href{https://dlmf.nist.gov/10.9.E4}{(10.9.4)}]{NIST:DLMF} and the reproducing kernel property.

With Lemma \ref{prop:phirad} in hand and using the function $\phi$ defined in \eqref{eq:raddefphi}, it is straightforward to obtain the radial counterparts of the results of Section \ref{section:ifpart}.

Third, for the $\gtrsim$ part we modify Lemma \ref{lemma:bestapprox} as follows:

\begin{lemma}
\label{lemma:bestapprox2} Let $M,N>0$ and $N\in \mathbb{N}$. Assume that $y>0$ is large enough. Define

\begin{eqnarray*}
    \widetilde{E}_N(L(\xi y))&:=&\inf_{P \text{ of degree}\leq N}\Big( \int_{0}^{M} \left|L(\xi y)-P(\xi) \right|^2  \xi^{d-1} d \xi
\Big)^\frac12\\
&=&\Big( \int_{0}^{M} \left|L(\xi y)-P^*(\xi) \right|^2  \xi^{d-1} d \xi
\Big)^\frac12
\end{eqnarray*}
and
 \begin{equation}
    \begin{aligned}
        \label{eq:defh212}
    h_0(\xi)&:= \frac{1}{\widetilde{E}_N (L(\xi y))^{2}}\mathbbm{1}_{[0,M]}(\xi) \left(L(\xi y)-P^*(\xi) \right),  \quad \xi \in \mathbb{R},\\
        h(\xi)&:=h_0(|\xi|), \quad \xi \in \mathbb{R}^d.
        \end{aligned}
     \end{equation}
Then, the following hold:
\begin{enumerate}[label=\normalfont{(\arabic*)}]
    \item There exists $c>0$ small enough such that,  for $N< \lfloor cyM \rfloor$,

    $$\widetilde{E}_N(L(\xi y)) \approx_d M^{\frac{1}{2}} y^{-\frac{d-1}{2}};$$

    \item For $N<\lfloor cyM \rfloor$, the function $h$
    satisfies
    \begin{enumerate}
    \item[(i)]\; $\widehat{h}(y)=1;$
\item[(ii)] \;$
\norm{\widehat{h}}_2
\approx_d M^{-\frac{1}{2}} y^{\frac{d-1}{2}};$
\item[(iii)]\,
     $\displaystyle\int_{ \mathbb{R}^d}h(|\xi|) Q(\xi)  d \xi =0$ for any $d$-dimensional polynomial $Q$ of degree $\leq N$.
    \end{enumerate}

\end{enumerate}

\end{lemma}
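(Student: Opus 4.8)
The plan is to establish the two-sided estimate for $\widetilde E_N(L(\xi y))$ in item~(1) first, and then to deduce items~(2)(i)--(iii) from it together with the orthogonality built into the definitions of $P^*$ and $h$. We may assume that $yM$ is large, as it is in the application (where $M\approx y$), since otherwise the range $N<\lfloor cyM\rfloor$ is empty or bounded. For the upper bound in~(1) I would take $P=0$ and use the elementary estimate $|L(z)|\lesssim_d\min(1,z^{-(d-1)/2})$, which follows from the power series of $J_{(d-2)/2}$ near the origin and from its asymptotics at infinity; splitting the integral at $\xi=1/y$ then gives
$$\widetilde E_N(L(\xi y))^2\le\int_0^M|L(\xi y)|^2\xi^{d-1}\,d\xi\lesssim_d\int_0^{1/y}\xi^{d-1}\,d\xi+y^{-(d-1)}\int_{1/y}^{M}\,d\xi\lesssim_d My^{-(d-1)}.$$
For the lower bound, since $\xi^{d-1}\approx_d M^{d-1}$ on $I:=[M/2,M]$ and the restriction of $P^*$ to $I$ is a polynomial of degree $\le N$, one gets $\widetilde E_N(L(\xi y))^2\gtrsim_d M^{d-1}E_N^I(L(\cdot\,y))^2$, where $E_N^I$ denotes the unweighted $L_2(I)$ best approximation by polynomials of degree $\le N$; hence it suffices to prove $E_N^I(L(\cdot\,y))\gtrsim_d M^{(2-d)/2}y^{-(d-1)/2}$.

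For this I would use the Bessel asymptotics on $I$ (legitimate since $yM$ is large) to write
$$L(\xi y)=c_d\,y^{-\frac{d-1}{2}}\,\xi^{-\frac{d-1}{2}}\cos(2\pi\xi y-\varphi)+E(\xi),\qquad\norm{E}_{L_2(I)}\lesssim_d M^{-d/2}y^{-(d+1)/2},$$
so that the remainder $E$ is smaller than the target by the factor $yM$ and can be discarded via $E_N^I(L(\cdot\,y))\ge c_d\,y^{-(d-1)/2}E_N^I(F)-\norm{E}_{L_2(I)}$, where $F(\xi):=\xi^{-(d-1)/2}\cos(2\pi\xi y-\varphi)$. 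Since $\norm{F}_{L_2(I)}\approx_d M^{(2-d)/2}$, it remains to bound $E_N^I(F)$ from below by a constant (depending only on $d$) times $\norm{F}_{L_2(I)}$, and here I would repeat the argument of Lemma~\ref{lemma:Ecd} almost verbatim: the $L_2$-modulus of continuity of $F$ on $I$ at the shift $s=1/(2y)$ is $\gtrsim_d M^{(2-d)/2}$, because this shift reverses the sign of $\cos(2\pi\xi y-\varphi)$ while the slowly varying factor $\xi^{-(d-1)/2}$ changes only by $O((yM)^{-1})$ relative to its size; the inverse approximation theorem \cite[p.~221]{devore1993constructive} (rescaled to $I$), together with $E_k^I(F)\le\norm{F}_{L_2(I)}$ and a splitting of $\sum_{k\le n}E_k^I(F)$ at $k\approx\varepsilon n$ with $n\approx yM$ and $\varepsilon=\varepsilon(d)$ small, then forces $E_N^I(F)\gtrsim_d M^{(2-d)/2}$ for every $N<\lfloor cyM\rfloor$ with $c=c(d)$ small enough. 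This is the only genuinely delicate point: one has to combine the Bessel asymptotics on $[M/2,M]$ with the inverse theorem of approximation, keeping track of both the weight $\xi^{d-1}$ and the disparity between the oscillation scale $1/y$ and the interval length $M$.

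Items~(2) then follow formally. By definition $P^*$ is the $L_2([0,M],\xi^{d-1}\,d\xi)$-orthogonal projection of $L(\cdot\,y)$ onto the polynomials of degree $\le N$, so the residual $L(\xi y)-P^*(\xi)$ is orthogonal to all such polynomials; in particular $\int_0^M(L(\xi y)-P^*(\xi))P^*(\xi)\,\xi^{d-1}\,d\xi=0$, whence by the radial transform formula~\eqref{eq:radialfourier}
$$\widehat h(y)=\int_0^{\infty}h_0(\xi)L(\xi y)\,\xi^{d-1}\,d\xi=\widetilde E_N^{-2}\int_0^M|L(\xi y)-P^*(\xi)|^2\,\xi^{d-1}\,d\xi=1,$$
which is~(i). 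For~(ii), Plancherel's theorem and integration in polar coordinates give
$$\norm{\widehat h}_{L_2(\mathbb{R}^d)}^2=\norm{h}_{L_2(\mathbb{R}^d)}^2=|S^{d-1}|\,\widetilde E_N^{-4}\int_0^M|L(\xi y)-P^*(\xi)|^2\,\xi^{d-1}\,d\xi=|S^{d-1}|\,\widetilde E_N^{-2},$$
so $\norm{\widehat h}_2\approx_d\widetilde E_N^{-1}\approx_d M^{-1/2}y^{(d-1)/2}$ by~(1). Finally, for~(iii), writing $Q(\xi)=\sum_{|\alpha|\le N}a_\alpha\xi^\alpha$ and integrating over spheres of radius $r$, one obtains $\int_{\mathbb{R}^d}h(|\xi|)Q(\xi)\,d\xi=\int_0^M h_0(r)\widetilde Q(r)\,r^{d-1}\,dr$, where $\widetilde Q(r)=\sum_{|\alpha|\le N}a_\alpha r^{|\alpha|}\int_{S^{d-1}}\omega^\alpha\,d\sigma(\omega)$ is a one-dimensional polynomial of degree $\le N$; since $h_0$ is a constant multiple of the restriction of $L(\cdot\,y)-P^*$ to $[0,M]$, this integral vanishes by the same orthogonality.
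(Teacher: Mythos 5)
Your proposal is correct and follows essentially the same route as the paper: the upper bound via $P=0$ and the decay of $L$, the lower bound by restricting to $[M/2,M]$, invoking the Bessel asymptotics and rerunning the modulus-of-continuity/inverse-theorem argument of Lemma \ref{lemma:Ecd}, and items (2)(i)--(iii) from the orthogonality of the residual $L(\xi y)-P^*(\xi)$ together with item (1) and polar coordinates. The only (harmless) quantitative slip is your error bound: a direct computation gives $\norm{E}_{L_2(I)}\lesssim_d M^{(1-d)/2}y^{-d/2}$ rather than $M^{-d/2}y^{-(d+1)/2}$, which is still smaller than the target $M^{(2-d)/2}y^{-(d-1)/2}$ by the factor $(My)^{-1/2}$, so the triangle-inequality step goes through unchanged.
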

To prove (1), we observe that $$ \Big( \int_{0}^{M} \left|L(\xi y) \right|^2  \xi^{d-1} d \xi
\Big)^\frac12\geq \widetilde{E}_N(L(\xi y))
\gtrsim_d
M^\frac{d-1}{2} \Big( \int_{\frac{M}{2}}^{M} \left|L(\xi y)-P^*(\xi) \right|^2   d \xi
\Big)^\frac12.$$ Hence, the result follows as in Lemma \ref{lemma:Ecd} with the help of the well-known  asymptotic formula for the Bessel function
   $$L(z) = k_d z^{- \frac{d-1}{2}}  \cos( 2\pi z - \theta_d) + O(z^{-\frac{d}{2}}), \quad k_d>0.$$
   For (2), we note that \textit{(i)} and \textit{(ii)} 
   follow from \eqref{eq:defh212} and   item (1), 
    respectively. To obtain \textit{(iii)}, we notice that if $Q(\xi)$ is a $d$-dimensional polynomial of degree $N$, then
   $$\int_{\mathbb{R}^d} h(|\xi|) Q(\xi) d \xi
   \approx_d \int_0 ^\infty h_0(\xi) \tilde {Q}(\xi) \xi^{d-1} d \xi=0,$$
 since
 $\tilde{Q}(\xi) =\int_{\mathbb{S}^{n-1}} Q( |\xi| \omega) d \omega$ is a one-dimensional polynomial of degree not greater than $N$.

 With Lemma \ref{lemma:bestapprox2} in hand, the lower bound in Theorem \ref{radial} can be proved in a similar way to that of Theorem \ref{theorem:mainth}; see Section \ref{4.3}.
\end{proof}

2.
 The approach described in item \textit{(iv)} of Remark \ref{remark:intro} can be used to deduce from inequality \eqref{eq:y2} the following generalization of the upper bound in Theorem \ref{theorem:mainth}:
 $$ |f(y)|^2 w(y)^2 \lesssim_d  (1+ |y|^{d})\left(\norm{f(x) w(x)}_2 ^2 + \norm{\widehat{f}(\xi) w(\xi)}_2 ^2\right),$$ where
\begin{equation}
\label{def:secw}
  w(x)^2 := \sum_{m>d/2}^\infty \frac{a_m}{m!} |x|^{2m}, \quad a_m\geq 0.
\end{equation}
Similarly, applying inequality \eqref{eq:m}, we obtain
\begin{equation}
\label{ineq:Cs}
    |f(y)|^2 \sum_{m>d/2}^\infty a_m C^{-2m} \lesssim_d  \norm{f(x) w(x)}_2 ^2 + \norm{\widehat{f}(\xi) w(\xi)}_2 ^2.
\end{equation}
Thus, if the sequence $(a_m)$ is such that \begin{equation}
\label{eq:trivialspace}
    \sum_{m>d/2}^\infty a_m C^{-2m}=\infty,
\end{equation} from inequality \eqref{ineq:Cs} we see that any function $f$ satisfying 
 $\norm{f(x) w(x) }_2  + \norm{\widehat{f}(\xi) w(\xi)}_2 <\infty$ vanishes identically.

In particular, by setting $a_m = C^{2m}$, we deduce that $\norm{f(x)e^{\frac{C^2}{2} |x|^2} }_2  + \norm{\widehat{f}(\xi) e^{\frac{C^2}{2} |\xi|^2}}_2 <\infty$ implies that $f\equiv 0,$ which is a weaker version of the Cowling--Price uncertainty principle
\eqref{cowling}.
It is interesting to note that for $f(x) =e^{-\pi |x|^2}$
\begin{equation*}
 \norm{f(x) w(x) }^2_2  + \norm{\widehat{f}(\xi) w(\xi)}^2_2\approx _d \sum^\infty_{m>d/2} \frac{a_m}{m!} \frac{\Gamma(m+\frac{d}{2})}{(2\pi) ^m} \approx _d \sum^\infty_{m> d /2 }
 \frac{a_m}{(2\pi)^{m}}
 m^{\frac{d}{2}-1}.
\end{equation*}
In particular, if \begin{equation}
\label{eq:gaussiannorm}
   \sum^\infty_{m> d /2 }
a_m (2\pi)^{-m} m^{\frac{d}{2}-1} <\infty,
\end{equation} then there exist non-zero functions satisfying  $\norm{f(x) w(x) }_2  + \norm{\widehat{f}(\xi) w(\xi)}_2 <\infty$.
Motivated by these observations, we pose
the following question:

{\it Open problem.}     For $w$ given by \eqref{def:secw}, set $S_w(D):=  \sum_{m> d/2 }^\infty a_m D^{-2m}$, $D>0$.
Define
$$E_w:=\left\{f \in L ^1(\mathbb{R}^d): \norm{f(x) w(x) }_2  + \norm{\widehat{f}(\xi) w(\xi)}_2<\infty\right\}
$$
    and $$G:= \Big\{D \in \mathbb{R}_+ : \text{for any $w$ given by \eqref{def:secw}}, \,\,\,\,S_w(D)=\infty \implies E_w = \{0\}\Big \}.$$
    Find $\inf G.$

Here, by \eqref{eq:trivialspace}, we know that $C\in G$, so $G$ is not empty; and, by \eqref{eq:gaussiannorm}, that $\inf G \geq \sqrt{2 \pi}$.

3. Using the results from Section 3, we now explicitly calculate   the supremum
 on the left-hand side of
\eqref{eq:derivatives--}. This result is of independent interest due to its relation to  point evaluation estimates in the Paley--Wiener space, which correspond to the case $\beta=0$, cf. \cite{quim}.


\begin{proposition}
\label{prop:betak}
Let $0<K<\infty$ and $0<\beta<1$. Then, $$\sup
\frac{|g(y)|}{N_{K,\beta}(g)} \approx_{K,\beta} (1+|y|)^{\frac{\beta}{2(1- \beta)}},$$ where the supremum
 on the left-hand side
 is taken over all
bandlimited  functions $g\in L_2(\mathbb{R})$ and
\begin{equation}
\label{eq:derivatives2}
N_{K,\beta}(g)^2:= \norm{g}_2^2 + \sum_{n=0}^\infty \frac{ K^{2n} |g^{(n)}(0)|^2}{ (n!)^{2 \beta}}.
\end{equation}
\end{proposition}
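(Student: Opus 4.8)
The plan is to run, for the norm $N_{K,\beta}$, the same duality-plus-construction scheme that proves Theorem~\ref{theorem:mainth} in the case $p=q=2$, $d=1$ (the statement is already one-dimensional). First I would make the standard reductions of Section~\ref{Section 4}: after replacing $g(y)$ by $\frac{g(y)+g(-y)}{2}$ we may work with $\cos(2\pi\xi y)$ instead of $e^{2\pi i\xi y}$, and assume $y>0$. The range $0<y\le 1$ is trivial: every bandlimited $g$ is entire, so $|g(y)|\le\sum_{n\ge 0}\frac{|g^{(n)}(0)|}{n!}|y|^n\le N_{K,\beta}(g)\bigl(\sum_{n\ge 0}K^{-2n}(n!)^{2\beta-2}\bigr)^{1/2}$, the last series converging because $2-2\beta>0$. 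From now on $y$ is large.

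For the lower bound I would recycle the extremizer of Section~\ref{4.3}. Fix a small constant $A=A(K,\beta)$, put $D:=A(1+y)^{1/(1-\beta)}$ and $M:=D/y\approx A(1+y)^{\beta/(1-\beta)}$, and pick $N\in\mathbb{N}$ as large as Lemma~\ref{lemma:bestapprox} permits ($4N\le\lfloor cD\rfloor$, with $c$ from Lemma~\ref{lemma:Ecd}). With $h$ as in \eqref{eq:defh2}, $h_M(\xi):=M^{-1}h(\xi/M)$ and $g:=\widehat{h_M}$, Lemma~\ref{lemma:bestapprox} gives $g(y)=\widehat h(D)\approx 1$, $\|g\|_2=M^{-1/2}\|\widehat h\|_2\approx M^{-1/2}$, and $g^{(n)}(0)=0$ for $n\le 2N$. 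For the tail of the derivative sum I would use $|g^{(n)}(0)|=M^{n}|\widehat h^{(n)}(0)|\lesssim M^{n}\|h\|_\infty/(n+1)$ together with the crude bound $\|h\|_\infty\lesssim N$ (from an $L^2\to L^\infty$ estimate for $V_N^L$, using $\|\widetilde P_m\|_\infty\approx m^{1/2}$); since for $A$ small enough $2N$ lies well beyond the index $n_*\approx(KM)^{1/\beta}$ that maximizes $\frac{(KM)^{2n}}{(n!)^{2\beta}}$, Stirling's formula yields $\sum_{n>2N}\frac{K^{2n}|g^{(n)}(0)|^2}{(n!)^{2\beta}}\lesssim e^{-c'(1+y)^{1/(1-\beta)}}$, which is negligible next to $\|g\|_2^2\approx M^{-1}$. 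Hence $N_{K,\beta}(g)\approx M^{-1/2}$ and $\frac{|g(y)|}{N_{K,\beta}(g)}\gtrsim M^{1/2}\approx(1+y)^{\beta/(2(1-\beta))}$.

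For the upper bound I would set $\rho:=c_0(1+y)^{\beta/(1-\beta)}$ for an appropriate constant $c_0$ and split $g(y)=\int_{|\xi|\le\rho}\widehat g(\xi)\cos(2\pi\xi y)\,d\xi+\int_{|\xi|>\rho}\widehat g(\xi)\cos(2\pi\xi y)\,d\xi=:A_{\mathrm{low}}+A_{\mathrm{high}}$. The low-frequency part is immediate: $|A_{\mathrm{low}}|\le\|\widehat g\|_2(2\rho)^{1/2}\lesssim\rho^{1/2}\|g\|_2\lesssim(1+y)^{\beta/(2(1-\beta))}N_{K,\beta}(g)$. For $A_{\mathrm{high}}$ I would invoke the reproducing-kernel function $\phi$ of Lemma~\ref{prop:phi}/\ref{prop:multiphi}, shifted to be centred at $y$ as in the proof of Lemma~\ref{lemma:mainl}, with parameters tuned so that $m_0/\delta\approx\rho$; writing $\Psi:=\phi(\cdot-y)$ one obtains $g(y)=\int g(y+u)\phi(u)\,du+\int_{\mathbb{R}}\widehat g(\xi)\cos(2\pi\xi y)\bigl(1-\widehat\phi(\xi)\bigr)\,d\xi$, whose first term is a local average of $g$ near $y$ bounded by $\|g\|_2\|\phi\|_2\lesssim\rho^{1/2}\|g\|_2$, and whose second term is controlled on $\{|\xi|\le\rho\}$ by the vanishing-moment estimate $|1-\widehat\phi(\xi)|\lesssim C^{m_0}(\delta|\xi|/m_0)^{m_0}$ of Lemma~\ref{prop:phi}(4). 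What is left — the contribution of $\{|\xi|>\rho\}$ — is the crux: one must exploit that a large bandwidth of $g$ forces the weighted moment sum $\sum_n\frac{(2\pi K)^{2n}}{(n!)^{2\beta}}|\langle\widehat g,\xi^n\rangle|^2=\sum_n\frac{K^{2n}|g^{(n)}(0)|^2}{(n!)^{2\beta}}$ in $N_{K,\beta}(g)$ to be correspondingly large, so that $\widehat g$ may be replaced by its truncation to $[-\rho,\rho]$ through a de~la~Vallée-Poussin-type operator (as in Lemma~\ref{lemma:legbess}) that reproduces all polynomials of degree $\lesssim\rho y$ — exactly the Taylor coefficients of $g$ at $0$ that are "visible" at scale $y$ — leaving an error against a function orthogonal to those polynomials, estimated by the fast rate of polynomial approximation of $\cos(2\pi\xi y)$ on intervals of length $\approx\rho$.

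I expect this last step to be the main obstacle, for two linked reasons. A naive Cauchy--Schwarz in the high-frequency integral yields $\|g\|_2$ times an $L^2$-norm of $1-\widehat\phi$ over $\operatorname{supp}\widehat g$, which diverges with the a priori unbounded bandwidth of $g$; and discarding the oscillation of $\cos(2\pi\xi y)$ replaces $\|g\|_2$ by $\|\widehat g\|_1$, which is just as uncontrolled. Moreover, the polynomial that approximates $\cos(2\pi\xi y)$ on $[-\rho,\rho]$ has monomial coefficients exponentially large in its degree $\approx\rho y$, so one cannot bound the weighted $\ell^2$-norm of these coefficients crudely; the estimate has to be carried out intrinsically on the frequency side, with all constants uniform in the bandwidth. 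Once the high-frequency bound is secured, combining it with the estimates for $A_{\mathrm{low}}$ and for $0<y\le 1$ gives the "$\lesssim$" direction, and together with the construction above this proves the proposition.
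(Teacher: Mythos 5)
Your lower-bound construction is sound and is essentially the paper's: the paper also takes the frequency $D\approx y^{1/(1-\beta)}$, band $M\approx y^{\beta/(1-\beta)}$, and a function whose first $\approx N$ Taylor coefficients at the origin vanish with $N$ chosen a large multiple of $(KM)^{1/\beta}$, so that the derivative sum in $N_{K,\beta}$ is dominated by $\norm{g}_2^2\approx M^{-1}$ (the paper uses the plain $L_2$ orthogonal-projection extremizer rather than the de la Vall\'ee Poussin function $h$ of \eqref{eq:defh2}, and bounds the tail simply via $|g^{(n)}(0)|\lesssim (2\pi)^n M^{n+1}$, but your variant works for the same reasons). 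The small-$y$ reduction is also fine.

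The genuine gap is the upper bound: your argument stops exactly at the high-frequency term, and the step you defer ("replace $\widehat g$ by its truncation to $[-\rho,\rho]$ through a de la Vall\'ee-Poussin-type operator, using that large bandwidth forces the moment sum to be large") is neither carried out nor the mechanism that actually closes the argument. The paper does not split frequencies at all, so no control of $\widehat g$ on $\{|\xi|>\rho\}$ by $\norm{g}_2$ is ever needed. Instead it takes $\phi$ from Lemma \ref{prop:phi} with $m=0$, $\delta:=y$ and $m_0:=\lceil Dy^{1/(1-\beta)}\rceil$, writes $g(y)=\int \widehat g(\xi)\big(1-\widehat\phi(-\xi)\big)e^{2\pi i\xi y}\,d\xi+\int \widehat g(\xi)\widehat\phi(-\xi)e^{2\pi i\xi y}\,d\xi$, and observes via the reproducing formula \eqref{eq:repro} that $\big(1-\widehat\phi(-\xi)\big)e^{2\pi i\xi y}=\sum_{n\ge m_0}c_n\xi^n$ with $|c_n|\lesssim (2\pi\delta)^n/n!$. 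Since $\widehat g$ is compactly supported, each moment $\int\widehat g(\xi)\xi^n d\xi$ is a multiple of $g^{(n)}(0)$, so the whole first term is bounded by Cauchy--Schwarz against the weights $K^n/(n!)^{\beta}$ appearing in $N_{K,\beta}$, giving a factor $\big(\sum_{n\ge m_0}\delta^{2n}K^{-2n}(n!)^{-2(1-\beta)}\big)^{1/2}$, which is $O(1)$ once $D=D(\beta,K)$ is large because $\beta<1$ and $m_0\gtrsim (y/K)^{1/(1-\beta)}$; the second term is simply $\le\norm{\phi}_2\,\norm{g}_2\lesssim (m_0/\delta)^{1/2}N_{K,\beta}(g)\approx y^{\beta/(2(1-\beta))}N_{K,\beta}(g)$. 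In other words, the "high-frequency" difficulty you flag is dissolved by routing the entire error term through the derivative part of the norm (all Taylor coefficients of $g$ at $0$ are already paid for in $N_{K,\beta}$), rather than by truncating $\widehat g$; without this (or an equivalent) step, your proof of the "$\lesssim$" direction is incomplete.
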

\begin{proof}[Sketch of the proof]
For the upper bound, let $y>0$.  Let $\phi$ be as in Lemma \ref{prop:phi}  with $m=0$,
$m_0:= \lceil Dy^{\frac{1}{1- \beta}} \rceil$,
 and $\delta:=y$ for a large enough  $D=D(\beta,K).$
 Then, using the reproducing formula \eqref{eq:repro}, we obtain
$$\left(1- \widehat{\phi}(-\xi)\right)e^{2 \pi i \xi y}=\int_{y-\delta}^{y+ \delta} \phi(x-y) (e^{2 \pi i \xi y} - e^{2 \pi i \xi x})dx = \sum_{n \geq m_0} c_n \xi ^n$$ with
$\displaystyle|c_n| \lesssim \frac{ (2 \pi \delta)^n}{n!}$. Then,
$$g(y)
= \int_{- \infty}^\infty \hat{g}(\xi) \left(1- \widehat{\phi}(-\xi)\right)e^{2 \pi i \xi y} d\xi +  \int_{- \infty}^\infty \hat{g}(\xi)  \widehat{\phi}(-\xi)e^{2 \pi i \xi y} d\xi.$$ Using that $\hat{g}$ is compactly supported and the Cauchy-Schwarz inequality, we arrive at
\begin{eqnarray*}
\left|\int_{- \infty}^\infty \hat{g}(\xi) \left(1- \widehat{\phi}(-\xi)\right)e^{2 \pi i \xi y} d\xi\right| &\leq& \sum_{n \geq m_0} \frac{|c_n|}{(2 \pi )^n} \left|g^{(n)}(0)\right|
\\&\lesssim&
N_{K, \beta}(g) \left(\sum_{n=m_0}^\infty \frac{ \delta^{2n}}{K^{2n} (n!)^{2(1-\beta)}}\right)^{\frac{1}{2}}.
\end{eqnarray*}
Thanks to the conditions satisfied by $\phi$ given in  Lemma \ref{prop:phi} (1), we obtain
$$  \int_{- \infty}^\infty \hat{g}(\xi)  \widehat{\phi}(-\xi)e^{2 \pi i \xi y} d\xi \leq \norm{\phi}_2 N_{K, \beta}(g)\lesssim (m_0/\delta)^{\frac{1}{2}}N_{K, \beta}(g),$$
completing the proof of the upper bound.

For the lower bound,
set
$M:=\lambda ^\beta y^{\frac{\beta}{1- \beta}}$, $N:= \lceil D \lambda y^{\frac{1}{1- \beta}} \rceil$, and
$\lambda:=(c/2D)^
\frac{1}
{1-\beta}$
for a large enough $D(\beta,K)$ and $c$ as in Lemma \ref{lemma:bestapprox}.
Let $P^*$ be the orthogonal projection of $\cos(2 \pi y \xi)$ onto the space of polynomials of degree $N$ on the interval $[-M,M]$ and set $$\hat{g}(\xi)=\mathbbm{1}_{[-M,M]}(\xi) \left(\cos(2 \pi y \xi) -P^*(\xi)\right).$$ Proceeding as in Lemma \ref{lemma:bestapprox}, we see that if $N\leq cMy$, $g(y) \approx \norm{g}_2 ^2 \approx M$, $g^{(n)}(0)=0$ for $0 \leq n \leq N$ and $|g^{(n)}(0)|\lesssim (2 \pi)^n M^{n+1}$ for any $n$. The proof is now complete.
\end{proof}

4. We conclude the paper by presenting the following quantitative  Cowling-Price uncertainty principle:
\begin{proposition}
\label{quantitativeCP}
    Let $1\leq p,q \leq \infty$.  Assume that $\norm{\widehat{f} e^{\pi \xi^2}}_q < \infty$. 
    Let 
    $$\tau:= \displaystyle
    \limsup_{|y|\to \infty} \frac{1}{2 \pi |y|}\log \left(\int_{- y}^{y} |f(x)|^p e^{p\pi  |x|^2} dx\right)^{\frac{1}{p}}.$$ 
    Then $\tau<\infty$ if and only if $\widehat{f}(\xi)=e^{-\pi \xi^2} \widehat{g}(\xi)$ with $\widehat{g} \in L^q$ of exponential type $\tau$. Moreover, if $\tau=0$, then ${f}\equiv 0$ if $q<\infty $; and ${f}(x)=Ce ^{-\pi x^2}$ if $q=\infty$.
\end{proposition}
\begin{proof}
    First, we assume that $\tau<\infty$. Let us  show that $\widehat{g}(\xi):=e^{\pi \xi^2} \widehat{f}(\xi)$ is of exponential type at most $\tau$.
    Observe that, for any $\lambda >1$, $$\displaystyle
    \limsup_{|y|\to \infty} \frac{1}{2 \pi |y|}\log \left(\int_{-\lambda y}^{\lambda y} |f(x)|^p e^{p\pi  |x|^2} dx\right)^{\frac{1}{p}}= \lambda \tau.$$ Hence, by item (\textit{v}) of Remark \ref{remark:intro},
     for any $\varepsilon>0$ we have $|f(x)|\lesssim_\varepsilon e^{- \pi x^2 + 2 \pi (\tau + \varepsilon) |x|}$. Thus $\widehat{f}$ is an entire function and, for $z\in \mathbb{C}$, \begin{equation}
     \label{eq:entire}
         |\widehat{f}(z)| \leq \norm{e^{2 \pi x \Im z }{f(x)}}_1 \lesssim _\varepsilon e^{\pi (\Im z)^2 + 2 \pi (\tau + \varepsilon) |\Im z|}.
     \end{equation}Furthermore, by item \textit{(i)} of Remark \ref{remark:intro}, 
     \begin{equation}
     \label{eq:rez}
        (1+ |\xi|)^{-\frac{1}{q}} e^{ \pi \xi^2} |\widehat{f} (\xi)| \lesssim  \norm{f(x) e^{\frac{\pi}{2} |x|^2}}_p + \norm{\widehat{f}(\xi) e^{\pi  |\xi|^2}}_q   .
     \end{equation}
     Set now $$G(z):=e^{2 \pi i (\tau + \varepsilon)z}\frac{\widehat{f}(z)e^{ \pi z^2} - \widehat{f}(0)}{z}.$$ 
     
     We are now going to use the Phragmén-Lindelöf principle to show that $G$ is bounded on the upper half-plane.
     
     By \eqref{eq:rez}, for real $\xi$, we have
    $|G(\xi)|\lesssim 1;$ and, by \eqref{eq:entire}, for $z$ with $\Im z \geq 0$ we have $|G(z)| \lesssim_{\varepsilon} e^{\pi (\Re z)^2 }.$ Thus, in light of the Hardy-Phragmén-Lindelöf theorem (see \cite[p. 134--135]{Havin}), $G$ is bounded on the upper half-plane, that is, $$\left |\frac{\widehat{f}(z)e^{ \pi z^2} - \widehat{f}(0)}{z}\right| \lesssim_\varepsilon e^{ 2 \pi (\tau + \varepsilon) |z|}$$ for $\Im z \geq 0$ and by symmetry the same holds for $\Im z \leq 0$. In conclusion, for any $\varepsilon>0$,
    $$|\widehat{f}(z)e^{ \pi z^2}| \lesssim _ \varepsilon e^{2 \pi (\tau + \varepsilon)|z|}\qquad \forall \,z\in \mathbb{C}.$$ 
    By the Paley-Wiener Theorem in $L_q$, 
    $ \widehat{f}(z)e^{ \pi z^2}$ is an $L_q$ function whose spectrum is contained in $[-\tau, \tau]$. In particular, if $\tau=0$, then $\widehat{f}\equiv 0$ provided that $q<\infty$; and $\widehat{f}(\xi)=Ce^{-\pi \xi^2}$ if $q=\infty$.

    For the converse, let $\widehat{g}$ 
     is an entire function of exponential type
 $\sigma$. Set $\widehat{f}(\xi) := \widehat{g}(\xi) e^{-\pi \xi^2}.$
    Observe that setting $h(x):=g(x)e^{-\pi x^2}$ we have $ f(x)=e^{-\pi x^2} \widehat{h}(ix)$ with $\widehat{h}$ of exponential type $\sigma$. Then, 
    $$ \log \left( \int_{- y}^{ y} |\widehat{h}(ix)|^p\right)^{\frac{1}{p}} \leq C_\varepsilon+2 \pi y  \sigma, 
    $$ whence we conclude that $\tau \leq \sigma$. 
\end{proof}

\vspace{1mm}

 \end{document}